\theoremstyle{plain}
\newtheorem{thm}{Theorem}[section]
\newtheorem{lemma}[thm]{Lemma}
\newtheorem{prop}[thm]{Proposition}
\newtheorem{cor}[thm]{Corollary}
\newtheorem{conj}[thm]{Conjecture}
\newtheorem*{thm*}{Theorem}
\newtheorem*{lemma*}{Lemma}
\newtheorem*{prop*}{Proposition}
\newtheorem*{cor*}{Corollary}
\newtheorem*{conj*}{Conjecture}
\theoremstyle{definition}
\newtheorem{defn}[thm]{Definition}
\newtheorem{ex}[thm]{Example}
\theoremstyle{remark}
\newtheorem*{rmk}{Remark}
\newtheoremstyle{case}{}{}{}{}{}{:}{ }{}
\theoremstyle{case}
\newcommand{\zz}{\mathbb{Z}}
\newcommand{\nn}{\mathbb{N}}
\newcommand{\rr}{\mathbb{R}}
\newcommand{\cc}{\mathbb{C}}
\newcommand{\cf}{\mathcal{F}}
\newcommand{\rmpa}{\mathrm{pa}}
\newcommand{\rmtop}{\mathrm{top}}
\newcommand{\ind}{\mbox{$\perp \kern-5.5pt \perp$}}
\newcommand{\indep}{\rotatebox[origin=c]{90}{$\models$}}
\newcommand\ddfrac[2]{\frac{\displaystyle #1}{\displaystyle #2}}
\begin{document}

\title[Directed Gaussian graphical models with toric vanishing ideals]{Directed Gaussian graphical models with toric vanishing ideals}
\author{Pratik Misra, Seth Sullivant }

\address{Department of Mathematics
\\North Carolina State University, Raleigh, NC, USA\\}

\email{ pmisra@ncsu.edu, smsulli2@ncsu.edu }

\keywords{clique sum, d-separation, t-separation, shortest trek map, safe gluing}

\begin{abstract}
Directed Gaussian graphical models are statistical models that use a directed acyclic graph (DAG) to represent the conditional independence structures between a set of jointly normal random variables. The DAG specifies the model through recursive factorization of the parametrization, via restricted conditional distributions. 
In this paper, we make an attempt to characterize the DAGs whose vanishing ideals
are toric ideals. In particular, we give some combinatorial criteria to construct such DAGs from smaller DAGs which have toric vanishing ideals. 
An associated monomial map called the \textit{shortest trek map} plays an
important role in our description of toric Gaussian DAG models. 
For DAGs whose vanishing ideal is toric, we prove results about the generating
sets of those toric ideals.
\end{abstract}

\maketitle

\section{Introduction}\label{sec:intro}

Gaussian graphical models are semi-algebraic subsets of the cone of positive definite covariance matrices. They are widely used throughout natural sciences, computational biology and many other fields \cite{Maathuis2019}. Graphical models can be defined by undirected graphs, directed acyclic graphs, or graphs that use a mixture of different types of edges. 
In this paper, we only consider those models which can be defined by 
directed acyclic graphs (DAGs). A DAG specifies a graphical model in two ways. 
The first way is via a combinatorial parametrization of covariance 
matrices that belong to the model and the second way is via 
conditional independence statements implied by the graph. 
The factorization theorem \cite[Thm 3.27]{Graphical Models} 
says that these two methods yield the same family of probability distribution functions.

The combinatorial parametrization of the covariance matrices for a Gaussian DAG model
is also known as the \textit{simple trek rule} (see e.g.~\cite{Trek Separation}). 
The vanishing ideal of the Gaussian DAG model, $I_G$,  is equal to the set of polynomials
in the covariances that are zero when evaluated at the simple
trek rule.   The algebraic interpretation of the second method, i.e., the conditional independence statements,  give us the conditional independence ideal $CI_G$. 
An important question that arises in the algebraic study of graphical models
is to determine the DAGs where the vanishing ideal and the conditional independence ideal 
are equal.  Although it is still an open problem, 
some past work and computational study \cite{Gaussian Networks} has been done in this direction.

The study of generators of the vanishing ideal $I_G$ is an important problem for constraint-based inference for inferring the structure of the underlying graph from data. For example, the TETRAD procedure \cite{Causation} specifically tests the degree 2 generators (tetrads) of the vanishing ideal for directed graphs to determine if the graphs have certain underlying features. 
In the undirected case, we showed  that the vanishing ideal is toric and is generated by polynomials of degree at most 2 if and only if $G$ is a 1-clique sum of complete graphs \cite{Block graphs}. 
Our goal in the present paper is to study the analogous problem for Gaussian DAG models.
While we are not able to give a complete characterization of the DAGs that have
degree two generators, and are toric, we develop methods to construct DAGs having toric vanishing ideals and understand the generating set of the vanishing ideal when it is toric. 
In particular, we develop three techniques to construct such DAGs with toric vanishing ideals
from smaller DAGs with the same property. 
These are called \textit{safe gluing, gluing at sinks} and \textit{adding a new sink.}

One of the important tools that we use throughout the paper is the \textit{shortest trek map} $\psi_G$.  We show that in some instances, 
the shortest trek map and the simple trek map have the same kernel, namely the ideal $I_G$.  
Being a monomial map, the kernel of $\psi_G$, which we denote by $ST_G$, 
is always a toric ideal.  Although $I_G, CI_G$ and $ST_G$ are not always equal, 
we are interested in characterizing the DAGs where these three ideals are the same. 
This not only tells us when the vanishing ideals are toric 
but we also get to know the structure of the generators of $I_G$ from $ST_G$.  
We show that when two DAGs $G_1$ and $G_2$ have toric vanishing ideals 
then gluing at sinks and adding a new sink always produces
a new graph $G$ with toric vanishing ideal.  
We also conjecture that the
same is true for the safe gluing of $G_1$ and $G_2$,
and prove a number of partial results towards this conjecture.
Further, we conjecture that every DAG whose vanishing ideal is 
toric can be obtained as a combination of these three operations 
starting with complete DAGs.

The paper is organized as follows.
Section \ref{sec:preliminaries}  gives an explicit description of the simple trek rule.
We recall  the notion of \textit{directed separation},
which is used in defining the conditional independence ideal.  
We also introduce the shortest trek map, and the shortest trek ideal $ST_G$.

In Section \ref{sec:safegluing} we look at some existing 
results from \cite{Graphical Models,Block graphs,Gaussian Networks}, 
about gluing graphs where nice properties of the vanishing ideals are preserved.
Using those results as inspiration, we construct a general operation which we call 
the ``\textit{safe gluing}" of DAGs.  Safe gluing is a type of clique sum for
DAGs such that most of the vertices in the clique are colliders along any paths passing through
the clique. 
We conjecture that when the vanishing ideals of two DAGs are the same 
as the kernel of their shortest trek maps, then a safe gluing 
of the two DAGs would also have a toric vanishing ideal. 
We prove this conjecture in some special cases.

In Section \ref{sec:sinks} we look at two more ways to construct 
new DAGs where the toric property is preserved, 
which we call \textit{gluing at sinks} and \textit{adding a new sink}. 
We analyze the generators of $ST_G$ in Section \ref{sec:shortesttrek} 
and show that the safe gluing action preserves the toric property when $ST_G$ equals $CI_G$ for the smaller DAGs, which further provides evidence for our Conjecture \ref{conj:safe gluing conjecture}.
In Section \ref{sec:conjectures}, 
we conclude with some conjectures which may be used to 
formulate a complete characterization of all possible DAGs having toric vanishing ideal.


\section{Preliminaries} \label{sec:preliminaries} 

This section primarily is concerned with preliminary definitions that we will
use throughout the paper.  We introduce the Gaussian DAG models, and their
vanishing ideals $I_G$.  We explain the concept of $d$-separation and
how this leads to the conditional independence ideal $CI_G$.  Finally,
we introduce the shortest trek map, and the corresponding shortest trek
ideal $ST_G$.

Computation of the vanishing ideal of a Gaussian undirected graphical model was first studied in \cite{BS n CU} and was later continued in \cite{Block graphs}. For DAGs, the computation of its vanishing ideal was introduced in \cite{Gaussian Networks} and was later seen in \cite{Trek Separation}. Prerequisites for these articles can be found in \cite{Algebraic Statistics}.

Let $G=(V,E)$ be a directed acyclic graph with vertex set $V(G)$ and edge set $E(G)$. As there are no directed cycles in the graph, we assume that the vertices are numerically ordered, i.e, $i \rightarrow j\in E(G)$ only if $i <j$. 
A \textit{parent} of a vertex $j$ is a node $i\in V(G)$ such that $i\rightarrow j$ is an edge in $G$. We denote the set of all parents of a vertex $j$ by $\rmpa(j)$. 
Given such a directed acyclic graph, we introduce a family of normal random variables that are related to each other by recursive regressions.

To each node $i$ in the graph, we introduce two random variables 
$X_i$ and $\varepsilon_i$. The $\varepsilon_i$ are independent normal variables 
$\varepsilon_i\sim \mathcal{N}(0,\omega_i)$ with $\omega_i>0$. For simplicity, we assume that all our random variables have mean zero. 
The recursive regression property of the DAG gives an expression for each $X_j$ in terms of $\varepsilon_j$, $X_i$ with $i<j$ and some regression parameters 
$\lambda_{ij} \in \rr$ assigned to the edges $i\rightarrow j$ in the graph,
\[
X_j=\sum_{i\in pa(j)} \lambda_{ij}X_i+\varepsilon_j.
\]
From this recursive sequence of regressions, we can solve for the covariance matrix 
$\Sigma$ of the jointly normal random vector $X$. 
This covariance matrix is given by a simple matrix factorization in terms of the regression parameters $\lambda_{ij}$ and the variance parameters $\omega_i$. Let $D$ be the diagonal matrix $D=$diag$(\omega_1,\omega_2,\ldots,\omega_m)$ and let $L$ be the $m\times m$ upper triangular matrix with $L_{ij}=\lambda_{ij}$ if $i\rightarrow j$ is an edge in $G$ and $L_{ij}=0$ otherwise.

\begin{prop}(\cite{Ancestral graph}, Section 8)\label{prop:parametrization}. The covariance matrix of the normal random variable $X=\mathcal{N}(0,\Sigma)$ is given by the matrix factorization 
\begin{equation}  \label{eq:matrixfactor}
\Sigma=(I-L)^{-T}D(I-L)^{-1}.
\end{equation}
\end{prop}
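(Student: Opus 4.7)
The proof proposal is straightforward linear algebra once one rewrites the recursive regression equations as a single matrix equation.

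First, I would collect the individual regressions $X_j = \sum_{i \in \rmpa(j)} \lambda_{ij} X_i + \varepsilon_j$ into one vector equation. By the definition of $L$, the $j$-th coordinate of $L^T X$ is exactly $\sum_{i} L_{ij} X_i = \sum_{i \in \rmpa(j)} \lambda_{ij} X_i$. Thus the system of regressions is equivalent to
\[
X = L^T X + \varepsilon.
\]
Since $i \to j \in E(G)$ forces $i < j$, the matrix $L$ is strictly upper triangular, so $I - L$ (and hence $I - L^T$) is unipotent and in particular invertible. Rearranging gives $X = (I - L^T)^{-1} \varepsilon = (I-L)^{-T} \varepsilon$.

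Next, I would compute the covariance matrix by using bilinearity of the expectation and the fact that the $\varepsilon_i$ are mutually independent centered normals with $\mathrm{Var}(\varepsilon_i) = \omega_i$. This latter property is exactly the statement $\mathbb{E}[\varepsilon \varepsilon^T] = D$. So
\[
\Sigma = \mathbb{E}[X X^T] = (I-L)^{-T}\, \mathbb{E}[\varepsilon \varepsilon^T]\, (I-L)^{-1} = (I-L)^{-T} D (I-L)^{-1},
\]
which is the desired factorization. Note that $X$ is jointly normal with mean zero because it is a linear transformation of the jointly normal centered vector $\varepsilon$, so the covariance matrix indeed determines its distribution.

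There is no genuine obstacle here; the only subtle point worth spelling out is the justification that $I - L$ is invertible, which follows from the acyclicity hypothesis via the strict upper triangularity of $L$. Everything else is a routine manipulation of expectations of linear combinations of independent Gaussians.
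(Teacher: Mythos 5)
Your proof is correct and is the standard derivation of this factorization. The paper itself states this proposition as a citation to Richardson and Spirtes (Section 8 of the Ancestral Graph paper) without reproducing the argument, so there is no internal proof to compare against; your argument is exactly the one a reader would expect to supply. The rewriting of the system of recursive regressions as $X = L^T X + \varepsilon$, the observation that acyclicity with the numerical ordering makes $L$ strictly upper triangular and hence $I-L$ unipotent, and the bilinearity computation $\mathbb{E}[XX^T] = (I-L)^{-T}\mathbb{E}[\varepsilon\varepsilon^T](I-L)^{-1} = (I-L)^{-T}D(I-L)^{-1}$ are all carried out correctly, and you are right that the invertibility of $I-L$ is the only point that genuinely needs to be flagged.
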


The vanishing ideal of the Gaussian graphical model is denoted by 
$I_G$ and it is an ideal in the polynomial ring 
$\cc[\Sigma] = \cc[\sigma_{ij} : 1 \leq i \leq j \leq n]$.
This is the ideal of all polynomials in the entries of the covariance matrix
$\Sigma$, that evaluate to zero for every choice of the parameters $\omega_i$ and
$\lambda_{ij}$.  That is:
\[
I_G =  \left\{  f \in \cc[\Sigma] :  f( (I-L)^{-T}D(I-L)^{-1}) = 0  \right\}. 
\]

One way to obtain $I_G$ is to eliminate the indeterminates $\omega_i$ and $\lambda_{ij}$ from the following system of equations:
\[
\Sigma-(I-L)^{-T}D(I-L)^{-1}=0.
\]
Using elimination is computationally expensive, and we are interested in theoretical
results that characterize the generators of $I_G$ when possible.

A variant on the parametrization (\ref{eq:matrixfactor}) is the  \textit{simple trek rule} which is a common and useful representation of the covariances in a Gaussian
DAG model. In order to explain the simple trek rule, we first need to go through a few definitions.
A \textit{collider} is a pair of edges $i\rightarrow k$, $j\rightarrow k$ with the same head. A \textit{path} in a DAG $G$ is a finite sequence of edges with any direction, which joins a sequence of vertices in $G$. If a path contains the edges $i\rightarrow k$ and $j\rightarrow k$, then the vertex $k$ is called the \textit{collider vertex} within that path. 
A path that does not repeat any vertex is called a \textit{simple path}. Let $T(i,j)$ be a collection of simple paths $P$ in $G$ from $i$ to $j$ such that there is no collider in $P$. Such a colliderless path is called a \textit{simple trek}. For the rest of the paper, we consider treks to be simple treks. We will often  use the notation $i \rightleftharpoons j$ to denote a specific trek between $i$ and $j$, as this helps to call attention to the endpoints.  When we speak generically of a trek,
we often denote it by $P$.

Each trek $P$ has a unique \textit{topmost} element  $\rmtop(P)$, which is the point where orientation of the path changes. A trek $P$ between $i$ and $j$ can also be represented by a pair of sets $(P_i,P_j)$, where $P_i$ corresponds to the directed path from $\rmtop(P)$ to $i$ and $P_j$ corresponds to the directed path from $\rmtop(P)$ to $j$. The vertex $\rmtop(P)$ is also called the \textit{common source} of $P_i$ and $P_j$. 

To get the simple trek rule, we introduce an alternate parameter $a_i$ associated to each node $i$ in the graph and is defined as the variance of $X_i$, i.e.~$\sigma_{ii}=a_i$. We expand the matrix product for $\Sigma$ in Proposition \ref{prop:parametrization} by taking the sum over all treks $P\in T(i,j)$. Using this expansion along with the alternate parameters $a_i$, we get the following definition :

\begin{defn}\label{defn:simple trek rule}
For a given DAG $G$, the \textit{simple trek rule} is defined as the rule in which the covariance $\sigma_{ij}$ is mapped to the sum of all possible simple treks from $i$ to $j$ in $G$. We represent the rule as a ring homomorphism $\phi_G$ where
\begin{eqnarray*}
\phi_G:\mathbb{C}[\sigma_{ij} : 1\leq i\leq j\leq n]&\rightarrow& \mathbb{C}[a_i,\lambda_{ij} : i,j\in [n], i\rightarrow j \in E(G)], \\
\sigma_{ij}&\mapsto& \sum_{P\in T(i,j)} a_{\rmtop(P)} \prod_{k\rightarrow l \in P}\lambda_{kl}. 
\end{eqnarray*}
\end{defn}

By Proposition 2.3 \cite{Gaussian Networks} we know that the kernel 
of the homomorphism $\phi_G$ equals the vanishing ideal 
$I_G$ of the model. We illustrate the simple trek rule with an example.

\begin{ex}\label{ex:phi_G}
Let $G_1$ be a directed graph on four vertices 
with edges $1\rightarrow 2,1\rightarrow 3, 1\rightarrow 4,2\rightarrow 3$ and $2\rightarrow 4$ 
(this is graph $G_1$ in Figure \ref{fig:shortest trek map}). The homomorphism $\phi_G$ is given by
\begin{eqnarray*}
\begin{split}
\sigma_{11}&\mapsto a_1 \\
\sigma_{12}&\mapsto a_1\lambda_{12} \\
\sigma_{13}&\mapsto a_1\lambda_{13} +a_1\lambda_{12}\lambda_{23} \\
\sigma_{14}&\mapsto a_1\lambda_{14}+a_1\lambda_{12}\lambda_{24} \\
\sigma_{22}&\mapsto a_2 \\
\end{split}
\quad \quad
\begin{split}
\sigma_{23}&\mapsto a_2\lambda_{23}+a_1\lambda_{12}\lambda_{13} \\
\sigma_{24}&\mapsto a_2\lambda_{24}+a_1\lambda_{12}\lambda_{14} \\
\sigma_{33}&\mapsto a_3 \\
\sigma_{34}&\mapsto a_2\lambda_{23}\lambda_{24}+a_1\lambda_{13}\lambda_{14} \\
\sigma_{44} &\mapsto a_4 \\
\end{split}
\end{eqnarray*}
The ideal $I_G$ is generated by a degree $3$ polynomial given by 
\[
I_G=\langle \sigma_{13}\sigma_{14}\sigma_{22}-\sigma_{12}\sigma_{14}\sigma_{23}-\sigma_{12}\sigma_{13}\sigma_{24}+\sigma_{11}\sigma_{23}\sigma_{24}+\sigma_{12}^2\sigma_{34}-\sigma_{11}\sigma_{22}\sigma_{34} \rangle.
\]
\end{ex}

We now look at the notion of \textit{directed separation} 
(also known as d-separation). The  d-separation criterion is used to construct the \textit{conditional independence} ideal $CI_G$.

\begin{defn}
Let $G$ be a DAG with $n$ vertices. Let $A$, $B$ and $C$ be disjoint subsets of $[n]$. Then $C$ \textit{d-separates} $A$ and $B$ if every path in $G$ connecting a vertex $i \in A$ to a vertex $j \in B$ contains a vertex $k$ that is either
\begin{enumerate}[i)]
    \item a non-collider that belongs to $C$ or
    \item a collider that does not belong to $C$ and has no descendants that belong to $C$. 
\end{enumerate}
\end{defn}

A key result for DAG models relates conditional independence to 
d-separation  (see e.g.~\cite[Sec. 3.2.2]{Graphical Models}).

\begin{prop} \label{prop:dsep}
The conditional independence statement $A\indep B|C$ holds for the directed Gaussian model associated to $G$ if and only if $C$ d-separates $A$ from $B$ in $G$. 
\end{prop}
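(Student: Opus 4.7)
The plan is to prove both implications, exploiting both the probabilistic structure of the recursive regressions and the combinatorial structure of d-separation. For the forward direction (d-separation implies conditional independence), I would first establish the \emph{local Markov property}: for each vertex $i$, the variable $X_i$ is independent of its non-descendants given $X_{\rmpa(i)}$. This is immediate from the recursive regression $X_i = \sum_{j \in \rmpa(i)} \lambda_{ji} X_j + \varepsilon_i$, because conditioning on $X_{\rmpa(i)}$ reduces the residual randomness of $X_i$ to $\varepsilon_i$; any non-descendant of $i$ is a polynomial in $\{\varepsilon_k : k \text{ not a descendant of } i\}$, a collection independent from $\varepsilon_i$ by construction.

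I would then bootstrap from the local to the global Markov property using the standard moralization argument. For a given triple $(A,B,C)$, restrict to the induced subgraph on the ancestral set $U = \mathrm{An}(A\cup B\cup C)$, moralize it (add edges joining every pair of parents with a common child and drop orientations) to obtain an undirected graph $(G_U)^m$, and prove the combinatorial lemma that $C$ d-separates $A$ from $B$ in $G$ if and only if $C$ separates $A$ from $B$ in $(G_U)^m$. Together with the fact that the local Markov property on the undirected graph implies the global Markov property (which for Gaussians follows from Hammersley--Clifford applied to the joint density inherited from the DAG factorization), this yields $A \indep B \mid C$.

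For the converse, I would argue by contrapositive. If $C$ does not d-separate $A$ from $B$, pick an active path $P$ from some $i \in A$ to some $j \in B$ given $C$. Using the simple trek rule $\phi_G$, I would exhibit specific numerical values of $(\omega_k, \lambda_{kl})$ for which the conditional covariance $\mathrm{Cov}(X_i, X_j \mid X_C)$ is nonzero. The idea is to choose the regression weights so that essentially only the contribution associated with $P$ and the descendant-paths witnessing the activation of its colliders survives, so that the corresponding partial covariance is a nonzero monomial in the parameters and hence does not vanish identically.

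The main obstacle is the purely graph-theoretic equivalence between d-separation in $G$ and separation in the moralized ancestral graph $(G_U)^m$. This is where the classical proof does its real work: it requires a careful case analysis distinguishing colliders from non-colliders along each path, and verifying that active paths in the DAG correspond precisely to connecting paths in the moralized graph. The probabilistic content of the theorem, by contrast, is essentially a one-line consequence of the structure of $X_j = \sum \lambda_{ij} X_i + \varepsilon_j$ once the combinatorial equivalence is in hand.
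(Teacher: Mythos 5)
The paper does not prove this proposition; it cites it as a known result from Lauritzen \cite[Sec.~3.2.2]{Graphical Models}, so there is no in-paper proof to compare against. Your sketch reproduces the standard textbook argument: derive the local (directed) Markov property from the recursive regression structure, reduce global d-separation to ordinary separation in the moralized ancestral graph via the usual combinatorial equivalence, invoke the equivalence of Markov properties for the (positive, Gaussian) density on the moralized undirected graph, and then argue the converse by constructing parameter values witnessing a nonzero partial covariance along an active path. This is exactly the classical route and the outline is correct; the only cosmetic slip is that $X_m$ is a \emph{linear} combination (not a general polynomial) of the noise terms $\varepsilon_k$ over the ancestors of $m$, though this does not affect the independence argument.
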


Let $A$, $B$ and $C$ be disjoint subsets of $[n]$. The normal random vector
 $X\sim \mathcal{N}(\mu,\Sigma)$ satisfies the conditional independence constraint $A\indep B|C$ if and only if the submatrix $\Sigma_{A\cup C, B\cup C}$ has rank less than or equal to $|C|$. Combining this result with the definition of $d$-separation, we have the following:

\begin{defn}
The \textit{conditional independence ideal} of $G$ is defined as the ideal generated by the set of all $d$-separations in $G$, that is,
\[
CI_G=\langle (\#C+1) \text{ minors of } \Sigma_{A\cup C,B\cup C} | \hspace{1mm}C\hspace{1mm} d\text{-separates }A \text{ from } B \text{ in } G \rangle. 
\]
\end{defn}

Note that every covariance matrix in the Gaussian DAG model satisfies the 
conditional independence constraints obtained by d-separation.  
This means that $CI_G \subseteq I_G$. In fact, the variety of $CI_G$ defines
the model inside the cone of positive definite matrices.   Still, one would like to
understand when $CI_G = I_G$.  Towards this end, we study the related question
of when $I_G$ and, hence, $CI_G$ are toric.

\begin{ex}
Let $G$ be a DAG with 4 vertices as shown in Figure \ref{figure:Illustrating CI_G}. Observe that there exists no trek between the vertices $1$ and $2$ as the path $1\rightarrow 3 \leftarrow 2$ has a collider at $3$ and the other path $1 \rightarrow 3 \rightarrow 4 \leftarrow 2$ has a collider at $4$. So, we have that $\sigma_{12}\in CI_G$.

We now look at the two paths $1\rightarrow 3 \rightarrow 4$ and $1\rightarrow 3 \leftarrow 2 \rightarrow 4$ between $1$ and $4$. In the first path, $3$ is the only vertex in the path, which is also a non-collider. So, any $d$-separating set of $1$ and $4$ must contain $3$. But $\{3\}$ is not enough to $d$-separate $1$ from $4$ as $3$ is a collider vertex in the second path. So, we add the vertex $2$ to the $d$-separating set which gives us that $\{2,3\}$ $d$-separates $1$ from $4$. This implies that the $3\times 3$ minors of $\Sigma_{\{1,2,3\},\{2,3,4\}}\in CI_G$.

Computing $I_G$ and $CI_G$ gives us that 
\[
I_G=CI_G= \langle \sigma_{12}, \sigma_{12}\sigma_{23}\sigma_{34}-\sigma_{12}\sigma_{33}\sigma_{24}-\sigma_{13}\sigma_{22}\sigma_{34}+\sigma_{13}\sigma_{23}\sigma_{24}+\sigma_{14}\sigma_{22}\sigma_{33}-\sigma_{14}\sigma_{23}^2 \rangle,
\]
where the second generator of $CI_G$ is the determinant of $\Sigma_{\{1,2,3\},\{2,3,4\}}$.
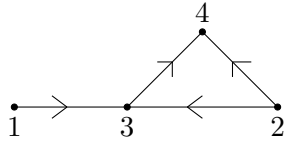
\begin{figure}
\begin{tikzpicture}
\filldraw[black]
(8.5,0) circle [radius=.04] node [below]
{1}
(10,0) circle [radius=.04] node [below]
{3}
(12,0) circle [radius=.04] node [below]
{2}
(11,1) circle [radius=.04] node [above]
{4};
\draw
(8.5,0)--++(3.5,0)
(10,0)--++(1,1)
(11,1)--++(1,-1);

\draw
(9,.15)--(9.2,0)
(9,-.15)--(9.2,0)
(10.4,0.6)--(10.6,.6)
(10.6,.4)--(10.6,.6)
(11,.15)--(10.8,0)
(11,-.15)--(10.8,0)
(11.4,0.4)--(11.4,.6)
(11.65,.6)--(11.4,.6);
\end{tikzpicture}
\caption{A DAG $G$ with 4 vertices\label{figure:Illustrating CI_G}}
\end{figure}
\end{ex}

To explain our results on when $I_G$ is toric,  we first give a brief description of toric ideals in general. Let $\mathcal{A}=\{a_1,a_2,\ldots,a_n\}$ be a fixed subset of $\zz^d$. We consider the homomorphism
\begin{eqnarray*}
\pi: \mathbb{N}^n\rightarrow \mathbb{Z}^d, \hspace{1cm} u=(u_1,\ldots,u_n)\mapsto u_1a_1+\cdots+u_na_n.
\end{eqnarray*}
This map $\pi$ lifts to a homomorphism of semigroup algebras:
\begin{eqnarray*}
\hat{\pi}: \cc[x_1,\ldots,x_n]\rightarrow \cc[t_1,\ldots,t_d,t_1^{-1},\ldots,t_d^{-1}], \hspace{.75cm}
x_i\mapsto t^{a_i}.
\end{eqnarray*}
The kernel of $\hat{\pi}$ is called the \textit{toric ideal} of $\mathcal{A}$,
and is denoted $I_\mathcal{A}$.  
By Lemma 4.1 of \cite{GB and CP} we know that the toric ideal can be generated by the (infinite) set of binomials of the form 
\begin{eqnarray*}
\{x^u-x^v : u,v \in \mathbb{N}^n \text{ with } \pi(u)=\pi(v) \}
\end{eqnarray*}
and so one question when confronted with a specific toric ideal is finding an
explicit finite set of binomial generators.  

From the construction above we observe that any monomial map can be written as 
$\hat{\pi}$ for some given set of vectors $\mathcal{A}$. 
This gives us that the kernel of every monomial map is a toric ideal. 
Further, every toric ideal is generated by a finite set of binomials. 
Because toric ideals are the vanishing ideals of monomial parametrizations, 
this leads us to identify a class of DAGs for which $I_G$ is obviously toric.

\begin{prop}\label{prop:simple trek-monomial map}
Let $G$ be a DAG such that there exists a unique simple trek (or no trek) between any two vertices of $G$. Then the simple trek rule is a monomial map hence $I_G$ is toric.
\end{prop}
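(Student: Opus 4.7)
The plan is to show that, under the hypothesis, the simple trek rule $\phi_G$ is literally a monomial map (each $\sigma_{ij}$ maps either to a monomial or to zero), and then invoke the fact stated just before the proposition that the kernel of any monomial map is a toric ideal.

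First I would unpack $\phi_G(\sigma_{ij}) = \sum_{P \in T(i,j)} a_{\rmtop(P)} \prod_{k \to l \in P} \lambda_{kl}$ according to the three cases allowed by the hypothesis. For $i = j$ the trivial trek consisting of the single vertex $i$ is the unique element of $T(i,i)$, hence $\phi_G(\sigma_{ii}) = a_i$, a monomial. For $i < j$, the assumption says $|T(i,j)| \le 1$. If $T(i,j) = \{P\}$ then $\phi_G(\sigma_{ij}) = a_{\rmtop(P)} \prod_{k \to l \in P} \lambda_{kl}$ is a single monomial, and if $T(i,j) = \emptyset$ then $\phi_G(\sigma_{ij}) = 0$.

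Next I would handle the degenerate case $\phi_G(\sigma_{ij}) = 0$ cleanly so the word ``monomial map'' applies in the strict sense. Let $Z = \{(i,j) : T(i,j) = \emptyset\}$ and $R = \cc[\Sigma]/\langle \sigma_{ij} : (i,j) \in Z \rangle$. Since each $\sigma_{ij}$ with $(i,j) \in Z$ lies in $\ker \phi_G = I_G$, the map $\phi_G$ factors through $R$, inducing $\overline{\phi}_G : R \to \cc[a_i, \lambda_{kl}]$. By the first step, $\overline{\phi}_G$ sends each of the remaining generators $\sigma_{ij}$ of $R$ to a monomial, so $\overline{\phi}_G$ is a monomial map. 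By the discussion preceding the proposition, $\ker \overline{\phi}_G$ is a toric ideal in $R$, generated by a finite set of binomials. Pulling back to $\cc[\Sigma]$, we obtain $I_G = \langle \sigma_{ij} : (i,j) \in Z \rangle + J$, where $J$ is generated by binomial preimages of the generators of $\ker \overline{\phi}_G$. Since $I_G$ is the kernel of a (possibly degenerate) monomial map, it is toric.

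There is essentially no obstacle here beyond the bookkeeping around variables that map to zero; the whole argument is an immediate unpacking of the simple trek rule together with the general principle about kernels of monomial maps. The only thing to be slightly careful about is not to forget the trivial trek contributing $\sigma_{ii} \mapsto a_i$, which is what keeps the diagonal variables from being lost in the analysis.
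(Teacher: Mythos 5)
Your proof is correct and takes essentially the same approach as the paper's: observe that under the hypothesis the simple trek rule sends each $\sigma_{ij}$ to a single monomial (or to zero), and conclude from the preceding discussion that the kernel of a monomial map is toric. Your extra care in quotienting out the $\sigma_{ij}$ with $T(i,j)=\emptyset$ so that the resulting map is a monomial map in the strict sense is a useful bit of bookkeeping that the paper's one-line proof elides, but it is a refinement of the same argument rather than a different route.
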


\begin{proof}
As shown in Definition \ref{defn:simple trek rule}, 
the simple trek rule maps $\sigma_{ij}$ to the sum of all the treks between $i$ and $j$. So, if there exists a unique trek (or no trek) between any two vertices of $G$, then the simple trek rule becomes a monomial map and hence $I_G$ is toric. 
\end{proof}

Proposition \ref{prop:simple trek-monomial map} already shows that
the DAGs where $I_G$ is a toric ideal can be quite complicated.

\begin{ex}
Let $G$ be an undirected graph, and form a DAG by
replacing each undirected edge $i - j$ with two directed edges  $v_{i,j} \to i$ and $v_{i,j} \to j$, where $v_{i,j}$ is a new vertex.
The resulting DAG $\hat{G}$, has a unique simple trek between any pair of vertices,
or no trek, and so the ideal $I_{\hat{G}}$ is toric.   
\end{ex}

A second natural source of DAGs which have a toric vanishing ideal
are DAGs that have a natural connection to undirected graphs.
In previous work \cite{Block graphs}, we characterized the undirected 
Gaussian graphical models
which have toric vanishing ideals.

\begin{thm}\label{thm:toric undirected}[Theorem 1,\cite{Block graphs}]
The vanishing ideal $P_G$ of an undirected Gaussian graphical model is generated in degree $\leq 2$  if and only if each connected component of the graph 
$G$ is a $1$-clique sum of complete graphs (also known as \textit{block graphs}).
In this case, $P_G$ is a toric ideal.  
\end{thm}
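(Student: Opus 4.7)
The plan is to prove the two implications separately, with the forward direction by induction on the number of maximal cliques (blocks) and the converse by exhibiting an obstruction coming from induced cycles. Without loss of generality I reduce to connected $G$: if $G$ has connected components $G_1,\ldots,G_r$, then $P_G$ is generated by the $P_{G_i}$ (in disjoint variable sets) together with the degree $1$ binomials $\sigma_{ij}$ for $i,j$ in distinct components, so $P_G$ is toric and degree $\leq 2$ generated iff each $P_{G_i}$ is.

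For the forward direction, the base case is a single complete graph $K_n$, where $P_{K_n}=(0)$ is trivially toric and degree $\leq 2$ generated. For the inductive step, I write $G = G_1 \cup_v G_2$ as a $1$-clique sum at a cut vertex $v$, where $G_2$ is a leaf block (a complete graph) and $G_1$ is the block graph obtained by deleting $V(G_2)\setminus\{v\}$. By induction $P_{G_1}$ is toric and generated in degree $\leq 2$, while $P_{G_2} = (0)$. The claim is that $P_G$ is generated by the image of $P_{G_1}$ together with the bridging binomials
\[
\sigma_{vv}\sigma_{ij} - \sigma_{iv}\sigma_{vj}, \qquad i\in V(G_1)\setminus\{v\},\ j\in V(G_2)\setminus\{v\}.
\]
These come from the conditional independence $i \perp j \mid v$, which holds because $\{v\}$ separates $G_1$ from $G_2$ in $G$. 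Toricity I would establish by exhibiting a monomial parametrization: attach a parameter $t_B$ to each block $B$ and $a_u$ to each vertex $u$, and show that $\sigma_{ij}$ can be written as $a_i a_j \prod t_B$ over the blocks lying on the unique path between $i$ and $j$ in the block tree. The bridging binomials then match the requisite monomial identifications at each cut vertex, and Lemma 4.1 of \cite{GB and CP} lets me conclude they suffice.

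For the converse, suppose $G$ is connected but not a block graph. Then $G$ contains an induced cycle $C_k$ with $k\geq 4$. Restricting to the subgraph induced on $V(C_k)$, the conditional independence $i \perp j \mid V(C_k)\setminus\{i,j\}$ for any non-adjacent pair $i,j$ in the cycle produces a minimal generator of $P_{C_k}$ equal to a $(k-1)\times (k-1)$ minor, i.e.\ of degree $k-1 \geq 3$. A restriction/elimination argument (projecting away the covariance coordinates involving vertices outside $V(C_k)$) then transports this minimal generator of degree $\geq 3$ to a minimal generator of $P_G$ that cannot be produced from quadrics, contradicting the hypothesis.

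The hard part will be the inductive step in the forward direction: showing that the bridging binomials together with generators of $P_{G_1}$ really do generate $P_G$, rather than just a subideal of it. Establishing this requires either a Gröbner basis argument whose standard monomials are seen to match the Hilbert series of the toric image of the monomial parametrization, or a direct combinatorial matching showing that every binomial in the kernel of the parametrization can be rewritten, modulo the bridging relations at each cut vertex, into a binomial supported on a single block.
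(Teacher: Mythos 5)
The paper does not prove this statement; it quotes it as Theorem~1 of \cite{Block graphs} and refers the reader there. From the remark in Section~2 that the shortest trek map ``is defined in a similar manner as the shortest path map which played an important role in our proof of Theorem~\ref{thm:toric undirected},'' one sees that the cited proof proceeds, as you intend, via an explicit monomial parametrization whose kernel is shown to equal $P_G$. So your overall strategy is in the right spirit. However, the specific monomial map you propose is wrong, and your converse misses a case.

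Your parametrization $\sigma_{ij}\mapsto a_ia_j\prod_B t_B$ (one parameter $t_B$ per block) cannot have kernel $P_G$: already for a single block $K_n$ with $n\geq 3$ it uses only $n+1$ parameters against $\binom{n+1}{2}$ coordinates, so the kernel is a nonzero toric ideal even though $P_{K_n}=(0)$. Concretely, on $K_3$ the binomial $\sigma_{33}\sigma_{12}^2-\sigma_{22}\sigma_{13}^2$ lies in the kernel of your map (both sides have image $a_1^2a_2^2a_3^2t_B^2$) but not in $P_{K_3}$. You need one parameter per \emph{edge}, not per block --- for example $\sigma_{ij}\mapsto a_ia_j\prod_{e\in p(i,j)}t_e$ over the edges of the unique shortest path $p(i,j)$ in the block graph --- so that within a block every $\sigma_{ij}$ has a private edge parameter and $P_{K_n}=(0)$ is recovered, and only then can the clique-sum induction and bridging binomials be attempted. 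For the converse, ``$G$ connected and not a block graph'' does \emph{not} imply ``$G$ has an induced cycle $C_k$, $k\geq 4$'': the diamond $K_4$ minus an edge is chordal, has no induced $C_4$, and is not a block graph (its unique block is not a clique), yet its vanishing ideal has an irreducible degree-$3$ minimal generator coming from the $3\times 3$ minor encoding $3\perp 4\mid\{1,2\}$. The correct forbidden induced subgraphs are $C_k$ ($k\geq 4$) together with the diamond, and both must be handled. Finally, the claim that a minimal generator of degree $\geq 3$ for an induced subgraph ``transports'' to a minimal generator of $P_G$ is not automatic for elimination ideals and itself requires an argument; acknowledging this as the hard part of the converse would be appropriate.
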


Recall that a \emph{clique sum} of graphs $G_1$ and $G_2$ is a new graph obtained
by identifying two cliques of the same size in $G_1$ and $G_2$.  In a $k$-clique sum,
the cliques identified each have size $k$.  
While  Theorem \ref{thm:toric undirected} is a good starting point for
the analysis of DAG models, the underlying undirected structure is
not enough to characterize whether a DAG yields a toric vanishing ideal.

\begin{ex}\label{ex:3blocks}
Consider the three DAGs as given in Figure \ref{fig:3 block graphs}. Computing the vanishing ideals $I_{G_i}$, we get  
\begin{eqnarray*}
I_{G_1}&=&\langle \sigma_{12},\sigma_{13} \rangle \\
I_{G_2}&=& \langle \sigma_{12}\sigma_{23}-\sigma_{13}\sigma_{22},\sigma_{12}\sigma_{24}-\sigma_{14}\sigma_{22},\sigma_{13}\sigma_{24}-\sigma_{14}\sigma_{23} \rangle \\
I_{G_3}&=& \langle \sigma_{12},
\sigma_{12}\sigma_{23}\sigma_{34}-\sigma_{12}\sigma_{33}\sigma_{24}-\sigma_{13}\sigma_{22}\sigma_{34}+\sigma_{13}\sigma_{23}\sigma_{24}+\sigma_{14}\sigma_{22}\sigma_{33}-\sigma_{14}\sigma_{23}^2 \rangle.
\end{eqnarray*}
Note that all three DAGs have the same underlying undirected graph (up to permutation of labels), which is a $1$-clique sum of complete graphs. But only the first two DAGs have toric vanishing ideals. In $G_2$, the generators of $I_{G_2}$ correspond to the $2\times 2$ minors of $\Sigma_{12,234}$ as \{2\} $d$-separates \{1\} from \{3,4\}. Similarly, one of the generators of $I_{G_3}$ is the determinant of $\Sigma_{123,234}$ as \{2,3\} $d$-separates \{1\} from  \{4\}. Observe that the vertex \{3\} in $G_3$ is a collider within the path $1\rightarrow 3 \leftarrow 2\rightarrow 4$ and is a non collider within the trek $1\rightarrow 3\rightarrow 4$. 
This is an important observation for defining safe gluing later in the paper.

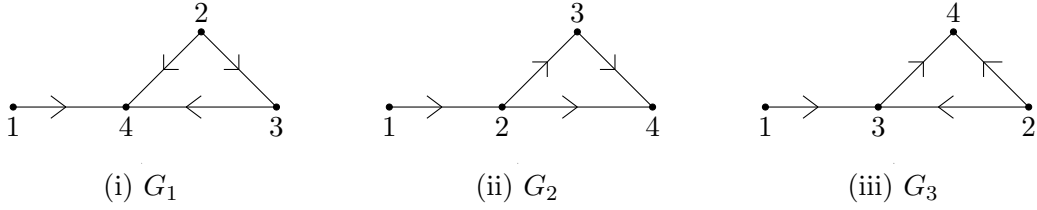
\begin{figure}
\begin{tikzpicture}
\filldraw[black]
(-1.5,0) circle [radius=.04] node [below] {1}
(0,0) circle [radius=.04] node [below] {4}
(1,1) circle [radius=.04] node [above] {2}
(2,0) circle [radius=.04] node [below] {3}
(.2,-.75) circle [radius=0] node [below] {(i) $G_1$};
\draw
(-1.5,0)--++(3.5,0)
(0,0)--++(1,1)
(1,1)--++(1,-1);
\draw 
(-1,.15)--(-.8,0)
(-1,-.15)--(-.8,0)
(0.5,0.7)--(.5,.5)
(0.7,.5)--(.5,.5)
(1,.15)--(.8,0)
(1,-.15)--(.8,0)
(1.5,0.7)--(1.5,.5)
(1.3,.5)--(1.5,.5);

\filldraw[black]
(3.5,0) circle [radius=.04] node [below] {1}
(5,0) circle [radius=.04] node [below] {2}
(7,0) circle [radius=.04] node [below] {4}
(6,1) circle [radius=.04] node [above] {3}
(5.2,-.75) circle [radius=0] node [below] {(ii) $G_2$}; 
\draw
(3.5,0)--++(3.5,0)
(5,0)--++(1,1)
(6,1)--++(1,-1);

\draw
(4,.15)--(4.2,0)
(4,-.15)--(4.2,0)
(5.4,0.6)--(5.6,.6)
(5.6,.4)--(5.6,.6)
(5.8,.15)--(6,0)
(5.8,-.15)--(6,0)
(6.5,0.7)--(6.5,.5)
(6.3,.5)--(6.5,.5);

\filldraw[black]
(8.5,0) circle [radius=.04] node [below]
{1}
(10,0) circle [radius=.04] node [below]
{3}
(12,0) circle [radius=.04] node [below]
{2}
(11,1) circle [radius=.04] node [above]
{4}
(10.2,-.75) circle [radius=0] node [below] {(iii) $G_3$}; 
\draw
(8.5,0)--++(3.5,0)
(10,0)--++(1,1)
(11,1)--++(1,-1);

\draw
(9,.15)--(9.2,0)
(9,-.15)--(9.2,0)
(10.4,0.6)--(10.6,.6)
(10.6,.4)--(10.6,.6)
(11,.15)--(10.8,0)
(11,-.15)--(10.8,0)
(11.4,0.4)--(11.4,.6)
(11.65,.6)--(11.4,.6);
\end{tikzpicture}
\caption{3 different DAGs having the same underlying undirected graph up to permutation of labels. \label{fig:3 block graphs}}
\end{figure}
\end{ex}

One thing that should be apparent in Example \ref{ex:3blocks} is that
the existence of a unique simple trek between pairs of vertices is not
a necessary condition for $I_G$ to be toric.  Indeed, in the DAG $G_1$,
there are two simple treks in $T(3,4)$ and yet the ideal $I_{G_1}$ is still
toric.  So in other cases when $I_G$ is toric, one way to demonstrate this
is to find an alternate parametrization for the ideal  $I_G$ that is monomial.
Our candidate for this new map is the 
 \textit{shortest trek map}.  This is defined  in a similar manner as the \textit{shortest path map} which played an important role in our proof
 of Theorem \ref{thm:toric undirected}.

\begin{defn}
Let $G$ be a DAG with $n$ vertices. Suppose that $G$ satisfies the
property that between any two vertices there is a unique shortest trek connecting them (or no trek connecting them).  For  vertices $i$ and $j$ in $G$, 
let $i\leftrightarrow j$ denote the shortest trek from $i$ to $j$ (if it exists). Then the \textit{shortest trek map} $\psi_G$ is given by
\begin{eqnarray*}
\psi_G: \mathbb{C}[\sigma_{ij}:1\leq i\leq j\leq n]&\rightarrow& \mathbb{C}[a_i,\sigma_{ij}:i,j\in [n],i\rightarrow j \in E(G)]\\
\psi_G(\sigma_{ij})&=&\begin{cases} 
          0 & \text{if there is no trek from }i \text{ to } j \\
          a_{\rmtop(i\leftrightarrow j)}\prod_{i'\rightarrow j' \in i \leftrightarrow j}\lambda_{i'j'} & \text{if shortest trek from } i \text{ to } j \text{ exists} \\
          a_i &  i=j . \\
          \end{cases}
\end{eqnarray*}
\end{defn}
The shortest trek map is defined only on those DAGs where there exists a unique shortest trek (or no trek) between any two vertices of $G$. We call the kernel of 
$\psi_G$ the \textit{shortest trek ideal} and denote it by $ST_G$.
We illustrate this with an example.

\begin{ex}\label{ex:shortest trek}

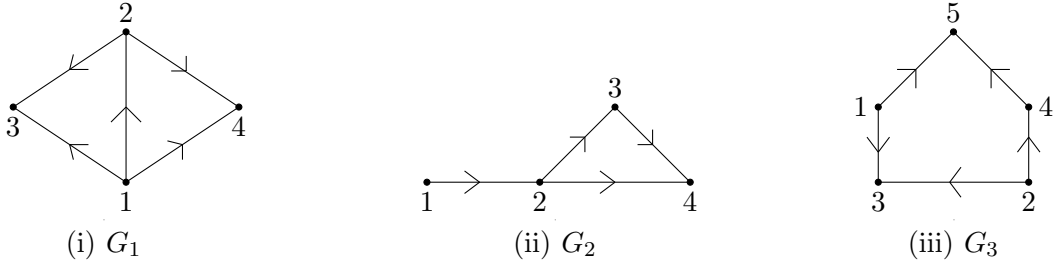
\begin{figure}
\begin{tikzpicture}
\filldraw[black]
(2,0) circle [radius=.04] node [below] {3}
(5,0) circle [radius=.04] node [below] {4}
(3.5,1) circle [radius=.04] node [above] {2}
(3.5,-1) circle [radius=.04] node [below] {1}
(3.2,-1.5) circle [radius=0] node [below] {(i) $G_1$};
\draw
(2,0)--(3.5,1)
(2,0)--(3.5,-1)
(5,0)--(3.5,-1)
(5,0)--(3.5,1)
(3.5,-1)--(3.5,1);
\draw 
(2.75,-.5)--(3,-.5)
(2.8,-.7)--(2.75,-.5)
(3.3,-.2)--(3.5,0)
(3.7,-.2)--(3.5,0)
(4.25,-.7)--(4.25,-.5)
(4.05,-.45)--(4.25,-.5)
(4.3,0.68)--(4.3,.48)
(4.1,.48)--(4.3,.48)
(2.75,.5)--(3,.5)
(2.8,.7)--(2.75,.5);

\filldraw[black]
(7.5,-1) circle [radius=.04] node [below] {1}
(9,-1) circle [radius=.04] node [below] {2}
(11,-1) circle [radius=.04] node [below] {4}
(10,0) circle [radius=.04] node [above] {3}
(9.2,-1.5) circle [radius=0] node [below] {(ii) $G_2$}; 
\draw
(7.5,-1)--++(3.5,0)
(9,-1)--++(1,1)
(10,0)--++(1,-1);

\draw
(8,-.85)--(8.2,-1)
(8,-1.15)--(8.2,-1)
(9.4,-.4)--(9.6,-.4)
(9.6,-.6)--(9.6,-.4)
(9.8,-.85)--(10,-1)
(9.8,-1.15)--(10,-1)
(10.5,-.3)--(10.5,-.5)
(10.3,-.5)--(10.5,-.5);

\filldraw[black]
(13.5,-1) circle [radius=.04] node [below] {3}
(13.5,0) circle [radius=.04] node [left] {1}
(14.5,1) circle [radius=.04] node [above] {5}
(15.5,0) circle [radius=.04] node [right] {4}
(15.5,-1) circle [radius=.04] node [below] {2}
(14.5,-1.5) circle [radius=0] node [below] {(iii) $G_3$};

\draw
(13.5,-1)--(13.5,0)
(13.5,0)--(14.5,1)
(14.5,1)--(15.5,0)
(15.5,0)--(15.5,-1)
(15.5,-1)--(13.5,-1);

\draw
(13.35,-.4)--(13.5,-.6)
(13.5,-.6)--(13.65,-.4)

(15.35,-.6)--(15.5,-.4)
(15.5,-.4)--(15.65,-.6)

(14.6,-.8)--(14.45,-1)
(14.45,-1)--(14.6,-1.2)

(13.75,.5)--(14,.5)
(14,.5)--(14,.25)

(15.25,.5)--(15,.5)
(15,.5)--(15,.25)
;
\end{tikzpicture}
\caption{Existence of a shortest trek map\label{fig:shortest trek map}}
\end{figure}

Let $G_1$ and $G_2$ be two DAGs as in Figure \ref{fig:shortest trek map}.
In $G_1$, there are exactly two treks of the same length from \{3\} to \{4\}. So, the shortest trek map is not defined for $G_1$.  
But as there exists a unique shortest trek between any two vertices in $G_2$, the shortest trek map $\psi_{G_2}$ is given by
\begin{eqnarray*}
\begin{split}
\sigma_{11}&\mapsto a_1 \\
\sigma_{12}&\mapsto a_1\lambda_{12} \\
\sigma_{13}&\mapsto a_1\lambda_{12}\lambda_{23} \\
\sigma_{14}&\mapsto a_1\lambda_{12}\lambda_{24} \\
\sigma_{22}&\mapsto a_2 \\
\end{split}
\quad \quad
\begin{split}
\sigma_{23}&\mapsto a_2\lambda_{23} \\
\sigma_{24}&\mapsto a_2\lambda_{24} \\
\sigma_{33}&\mapsto a_3 \\
\sigma_{34}&\mapsto a_2\lambda_{34} \\
\sigma_{44} &\mapsto a_4. \\
\end{split}
\end{eqnarray*}
Computing the vanishing ideal of $G_1$ gives us that $I_{G_1}$ is not toric as there exists a degree 3 minor in the  generating set (\{1,2\} $d$-separates \{3\} from \{4\}). But computing the kernel of the shortest trek map of $G_2$ gives us that
\[
\ker(\psi_{G_2})= ST_{G_2}=\langle \sigma_{12}\sigma_{23}-\sigma_{13}\sigma_{22},\sigma_{12}\sigma_{24}-\sigma_{14}\sigma_{22},\sigma_{13}\sigma_{24}-\sigma_{14}\sigma_{23} \rangle,
\]
which equals $I_{G_2}$ in Example \ref{ex:3blocks}. On the other hand, if we compute $ST_{G_3}$, we get
\[
ST_{G_3}= \langle \sigma_{14},\sigma_{12},\sigma_{13}\sigma_{15}-\sigma_{11}\sigma_{35},\sigma_{23}\sigma_{24}-\sigma_{22}\sigma_{34},\sigma_{24}\sigma_{45}-\sigma_{44}\sigma_{25} \rangle,
\]
which does not equal $I_{G_3}$ which has a generator of degree $3$ corresponding to a $3\times 3$ minor (as $\{1,2\}$ $d$-separates $\{3\}$ from $\{5\}$).
\end{ex}

In the example above we see that the shortest trek map does not exist for $G_1$. Although the existence of the shortest trek map does not ensure that $I_G$ would be toric (as seen in $G_3$), we do believe that $I_G$ cannot be toric when the shortest trek map is not well defined.  We look into this in more detail in Section \ref{sec:conjectures}.

The main problem of our interest is to find a characterization of the DAGs which have toric vanishing ideal and also understand the structure of its generators. In this context, it is also an important problem to understand when $I_G$ equals $CI_G$ as that would give us a definite structure of a generating set in terms of $d$-separations and minors. The ideal $ST_G$ comes into play here as we believe that $I_G$ is generated by monomials and binomials of degree at most $2$ if and only if $I_G$ is equal to $ST_G$. In the next two sections, we find ways to construct DAGs where $I_G = ST_G$.


\section{Safe gluing of DAGs }  \label{sec:safegluing}

As mentioned in the end of Section \ref{sec:preliminaries}, we are interested in those DAGs where $I_G$ equals $ST_G$. In this section we look at a specific way to construct such DAGs from smaller DAGs having the same property.
Given two DAGs $G_1$ and $G_2$ whose vanishing ideal is toric, there are various ways to glue $G_1$ and $G_2$ together. But the resultant DAG does not always have
a toric vanishing ideal. We are interested in those particular types of gluing operations
 which give us a toric vanishing ideal for the new DAG.
We use the term ``safe gluing" of two DAGs to denote a particular construction
which we conjecture to always preserve the toric property. 
Considering complete DAGs as the base case (as $I_G=0$ in that case), this method can be used to construct many DAGs which have toric vanishing ideal. 
The goal of this section is to explain the construction.
To motivate the concept of safe gluing, we first look at some existing results 
from the literature that give gluing operations on DAGs that preserve
the property of having a toric vanishing ideal.

\begin{defn}
Let $G$ be a DAG. A vertex $s$ in $G$ is called a \textit{sink} if all the edges adjacent to $s$ are directed towards $s$.
\end{defn}

\begin{prop}[Proposition 3.7, \cite{Gaussian Networks}]\label{prop:collider at m}
Let $G_1$ and $G_2$ be two DAGs having a common vertex $m$ that is a sink
in both $G_1$ and $G_2$. If $G$ is the new DAG obtained after 
gluing $G_1$ and $G_2$ at $m$, then $I_G$ can be written as
\[
I_G=I_{G_1}+I_{G_2}+\langle \sigma_{ij}  : i\in V(G_1)\setminus \{m\}, j\in V(G_2)\setminus \{m\} \rangle. 
\]
\end{prop}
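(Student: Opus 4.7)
The plan is to prove the two containments separately, writing $J$ for the ideal on the right-hand side of the claimed equality. The containment $J \subseteq I_G$ is essentially combinatorial and follows directly from the sink hypothesis, while the reverse containment $I_G \subseteq J$ is algebraic, exploiting a tensor-product decomposition of the simple trek parametrization of $G$ in terms of those of $G_1$ and $G_2$.

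I would first establish the key structural observation. Because $m$ is a sink in both $G_1$ and $G_2$, every edge of $G$ incident to $m$ is directed into $m$, so any path in $G$ that visits $m$ has $m$ as a collider along that path. Since $V(G_1) \cap V(G_2) = \{m\}$, any path between a vertex of $V(G_1) \setminus \{m\}$ and a vertex of $V(G_2) \setminus \{m\}$ must pass through $m$ and therefore cannot be a simple trek; this forces $\phi_G(\sigma_{ij}) = 0$ for all such pairs. A symmetric argument shows that every trek in $G$ whose endpoints both lie in $V(G_k)$ stays entirely within $V(G_k)$, so $\phi_G$ restricted to $\cc[\sigma_{ij} : i,j \in V(G_k)]$ coincides with $\phi_{G_k}$. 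Together these immediately give $J \subseteq I_G$.

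For the reverse inclusion, given $f \in I_G$ I would first use that $M := \langle \sigma_{ij} : i \in V(G_1) \setminus \{m\},\, j \in V(G_2) \setminus \{m\}\rangle$ is contained in $J \cap I_G$ to substitute $0$ for every such variable appearing in $f$. This produces an $f' \in I_G$ with $f - f' \in M$ and $f'$ involving only variables $\sigma_{ij}$ with $i,j$ both in $V(G_1)$ or both in $V(G_2)$. It therefore suffices to show that such an $f'$ lies in $I_{G_1} + I_{G_2}$ (viewed as extended ideals). To do this I would identify the codomain: letting $R_k$ be the parameter ring of $G_k$, the edge sets of $G_1$ and $G_2$ are disjoint (the shared vertex $m$ has no outgoing edges in either) and the vertex sets meet only in $m$, so the full parameter ring $R$ decomposes as $R_1 \otimes_{\cc[a_m]} R_2$. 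Likewise $\cc[\sigma_{ij} : i,j \in V(G_1)] \otimes_{\cc[\sigma_{mm}]} \cc[\sigma_{ij} : i,j \in V(G_2)]$ is the natural polynomial ring in which $f'$ lives, and under the identification $\sigma_{mm} \leftrightarrow a_m$ the restriction of $\phi_G$ is exactly $\phi_{G_1} \otimes \phi_{G_2}$.

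The final step is a standard kernel-of-a-tensor-product computation: all four rings in sight are polynomial rings over the common subring $\cc[a_m]$, hence flat over it, and a short snake-lemma diagram chase yields $\ker(\phi_{G_1} \otimes \phi_{G_2}) = I_{G_1} + I_{G_2}$ as ideals of the tensor product. The main obstacle is setting up this tensor decomposition cleanly — specifically verifying that $R_1$ and $R_2$ share no $\lambda$-parameters, which is exactly where the hypothesis that $m$ is a sink in each $G_k$ (and therefore contributes no outgoing edges) is essential. Once the decomposition is in place, the flatness/snake-lemma step is routine and the proof is complete.
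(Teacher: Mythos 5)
Your argument is correct and self-contained; note that the paper simply cites this proposition from the earlier reference rather than reproving it, so the natural comparison is with the paper's own proof of the closely related Theorem~\ref{thm: multiple sinks}. That proof is more combinatorial: it partitions the indeterminates $\sigma_{ij}$ into classes, observes that $\phi_G$ sends distinct classes to expressions in essentially disjoint sets of parameters, and concludes directly that no relation can mix classes. Your tensor-product formulation $R \cong R_1 \otimes_{\cc[a_m]} R_2$, $S/M \cong S_1 \otimes_{\cc[\sigma_{mm}]} S_2$, with $\phi_G$ descending to $\phi_{G_1}\otimes\phi_{G_2}$, is a cleaner way of making the ``disjoint parameters except for $a_m$'' idea precise, and your combinatorial preliminaries (treks never visit $m$ as an internal vertex, so $\phi_G$ restricts to $\phi_{G_k}$ on $\cc[\sigma_{ij}:i,j\in V(G_k)]$ and kills the cross variables) are all right. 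One point is elided in the final step: flatness of the four ambient polynomial rings $S_1, S_2, R_1, R_2$ over $\cc[a_m]$ is not by itself enough to deduce that $(S_1/I_{G_1}) \otimes_{\cc[a_m]} (S_2/I_{G_2}) \hookrightarrow R_1 \otimes_{\cc[a_m]} R_2$, which is the injectivity that actually closes the chase and gives $\ker(\phi_{G_1}\otimes\phi_{G_2}) = I_{G_1}^{e} + I_{G_2}^{e}$. You also need flatness of the quotient $S_k/I_{G_k} \cong \mathrm{im}(\phi_{G_k})$ over $\cc[a_m]$. This does hold --- $\mathrm{im}(\phi_{G_k})$ is a $\cc[a_m]$-subalgebra of the polynomial ring $R_k$, hence torsion-free, hence flat over the PID $\cc[a_m]$ --- but it is a separate fact and should be stated, since it is the place where the sink hypothesis (via $m$ being the unique shared vertex and contributing no shared $\lambda$ parameters, so $\cc[a_m]$ really is the overlap) does its work.
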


The vertex $m$ in $G$ is a collider vertex within any path from $V(G_1)\setminus \{m\}$ to $V(G_2)\setminus \{m\}$. 
Further, if $I_{G_1}$ and $I_{G_2}$ are toric, then from Proposition \ref{prop:collider at m} we can conclude that gluing $G_1$ and $G_2$ at a vertex $m$ such that $m$ is a collider within any path from $V(G_1)\setminus \{m\}$ to $V(G_2)\setminus \{m\}$ produces a new DAG $G$ whose vanishing ideal is also toric.   In other words,
we have the following corollary.

\begin{cor}\label{cor:singlesinkglue}
Let $G_1$ and $G_2$ be two DAGs having a common vertex $m$ that is a sink
in both $G_1$ and $G_2$. Let  $G$ be the new DAG obtained after  gluing $G_1$ and $G_2$ at $m$.  If $I_{G_1}$ and $I_{G_2}$ are toric, then so is $I_G$.  Furthermore,
if  $I_{G_1} = ST_{G_1}$ and $I_{G_2} = ST_{G_2}$ then $I_G = ST_G$. 
\end{cor}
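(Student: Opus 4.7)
The plan is to leverage Proposition \ref{prop:collider at m} as the structural backbone for both assertions. Because $m$ is a sink in each $G_i$, any would-be simple trek in $G$ connecting $i \in V(G_1)\setminus\{m\}$ to $j \in V(G_2)\setminus\{m\}$ is forced to pass through $m$ with a collider, so no trek exists. Consequently, a simple trek in $G$ with both endpoints in $V(G_1)$ cannot cross through $m$ into $G_2$ (and vice versa), so treks within each side coincide with treks in the individual $G_i$. Thus $\phi_G$ restricts to $\phi_{G_i}$ on $\cc[\sigma_{G_i}]$ and annihilates every cross variable, and Proposition \ref{prop:collider at m} gives
\[
I_G \;=\; I_{G_1} + I_{G_2} + \langle \sigma_{ij} : i \in V(G_1)\setminus\{m\},\ j \in V(G_2)\setminus\{m\} \rangle.
\]

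For the toric assertion, I would produce an explicit monomial parametrization of $I_G$. Writing $I_{G_i} = \ker \alpha_i$ for a monomial map $\alpha_i : \cc[\sigma_{G_i}] \to R_i$, note that $\alpha_i(\sigma_{mm})$ is a nonzero monomial, since $\phi_{G_i}(\sigma_{mm}) = a_m$ forces $\sigma_{mm} \notin I_{G_i}$. Introducing an auxiliary indeterminate $u$ and making each $R_i$ a $\cc[u]$-algebra via $u \mapsto \alpha_i(\sigma_{mm})$, I form $R := R_1 \otimes_{\cc[u]} R_2$ and define a monomial map $\alpha : \cc[\sigma_G] \to R$ extending $\alpha_1$ and $\alpha_2$ and sending each cross variable to $0$. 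A direct kernel computation, valid because $\cc[u] \hookrightarrow R_i$ is a polynomial (hence flat) extension and $u$ maps to a regular element of each target, shows $\ker \alpha$ equals the right-hand side displayed above, namely $I_G$. Hence $I_G$ is the kernel of a monomial map, so it is toric.

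For the sharper identity $ST_G = I_G$, I first verify that $\psi_G$ is well defined: the hypothesis $I_{G_i} = ST_{G_i}$ already requires each $\psi_{G_i}$ to exist, and the sink observation forces shortest treks in $G$ to be unique (cross pairs admit no trek at all). Then $\psi_G$ restricts to $\psi_{G_i}$ on $\cc[\sigma_{G_i}]$ and annihilates cross variables, yielding the trivial containment $ST_G \supseteq ST_{G_1} + ST_{G_2} + \langle \text{cross}\rangle$. The reverse containment is the main obstacle: working modulo the ideal of cross variables, the induced map $\bar\psi_G$ becomes the pushout of $\psi_{G_1}$ and $\psi_{G_2}$ along the isomorphism $\cc[\sigma_{mm}] \xrightarrow{\sim} \cc[a_m]$, and I need to show the kernel of this pushout equals $ST_{G_1}\bar R + ST_{G_2}\bar R$ inside $\bar R := \cc[\sigma_{G_1}] \otimes_{\cc[\sigma_{mm}]} \cc[\sigma_{G_2}]$. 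This reduces, as in the first part, to the same flatness-plus-regular-element tensor-product kernel calculation. Combining this equality with $ST_{G_i} = I_{G_i}$ and the decomposition of $I_G$ from Proposition \ref{prop:collider at m} yields $ST_G = I_G$.
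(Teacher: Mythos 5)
Your argument is correct and structurally mirrors the paper's: the backbone is Proposition~\ref{prop:collider at m}'s decomposition $I_G = I_{G_1} + I_{G_2} + \langle\sigma_{ij}: i \in V(G_1)\setminus\{m\},\ j\in V(G_2)\setminus\{m\}\rangle$, and the toricity of $I_G$ is read off from it. The paper in fact leaves the corollary unproved (it is asserted as an immediate consequence of Proposition~\ref{prop:collider at m}), so your fiber-product packaging is a legitimate way to supply the missing details. One technical caveat in the first half: $R := R_1\otimes_{\cc[u]}R_2$ need not be a Laurent polynomial ring if $\alpha_1(\sigma_{mm})$ and $\alpha_2(\sigma_{mm})$ are both imprimitive monomials (one obtains the group algebra of a possibly-torsion quotient lattice), so calling $\alpha$ a ``monomial map'' in the paper's sense is not literally accurate. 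This does not threaten the conclusion, however: your kernel computation still exhibits $I_G$ as generated by binomials and monomials, and since $I_G$ is automatically prime (it is $\ker\phi_G$, a map into a domain), toricity follows. For the $ST_G$ statement the caveat disappears, because $\psi_{G_i}(\sigma_{mm}) = a_m$ is a bare variable in both targets, so the fiber product genuinely is a polynomial ring and $\psi_G$ is a monomial map; the same decomposition then yields $ST_G = ST_{G_1} + ST_{G_2} + \langle\text{cross}\rangle = I_{G_1} + I_{G_2} + \langle\text{cross}\rangle = I_G$, exactly as you conclude. The flatness/regular-element language is heavier machinery than the paper's elementary ``disjoint indeterminates'' observation (used, e.g., in Case~II of Theorem~\ref{thm:safe gluing at n clique}), but it is a correct and slightly more general formulation of the same computation.
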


An example where this can be seen to occur is the graph $G_1$ in Example \ref{ex:3blocks}.  In $G_1$, we can consider the first graph as the subgraph with vertices $\{1,4\}$ and the second graph as the subgraph with vertices $\{2,3,4\}$. The vertex $4$ here is the common sink in both the graphs and hence the resultant vanishing ideal $I_{G_1}$ is toric.
We will generalize Corollary \ref{cor:singlesinkglue} in two ways. One is the safe gluing concept which is a combined generalization of Proposition \ref{prop:collider at m} and Corollary
\ref{cor:no collider}. The other is the concept of \textit{gluing at sinks} which we discuss in Section \ref{sec:sinks}.

A second situation where existing results in the literature can give us DAGS
with toric vanishing ideals concern situations where a DAG gives the same
independence structures as an undirected graph.  This is incapsulated in the 
concept of a perfect DAG.

\begin{defn}
Let $i,j,k$ be 3 vertices in a DAG $G$ containing the edges $i\rightarrow k$ and $j\rightarrow k$. Then $k$ is said to be an \textit{unshielded collider} in $G$ if $i$ and $j$ are not adjacent. A DAG $G$ is said to be \textit{perfect} if there are no unshielded colliders in $G$.
\end{defn}

Using the above definition, we state a result from \cite{Graphical Models}. 

\begin{prop}[Proposition 3.28, \cite{Graphical Models}]
Let $G$ be a perfect DAG and $G^\sim$ be its undirected version. Then the probability distribution $P$ admits a recursive factorization with respect to $G$ if and only if it factorizes according to $G^\sim$.
\end{prop}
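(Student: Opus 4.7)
The plan is to exploit the defining feature of a perfect DAG: the absence of unshielded colliders means that for every vertex $v$ the parent set $\rmpa(v)$ is itself a clique in $G^\sim$, and hence $\{v\} \cup \rmpa(v)$ is also a clique. Equivalently, the undirected skeleton $G^\sim$ coincides with the moralized graph $G^m$ of $G$, so the family of cliques that carries the recursive parametrization coincides with the family that carries the undirected clique parametrization.

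For the ``recursive factorization implies undirected factorization'' direction, I would start from
\[
P(x) \;=\; \prod_{v \in V(G)} P\bigl(x_v \mid x_{\rmpa(v)}\bigr),
\]
observe that each factor depends only on $x_{\{v\} \cup \rmpa(v)}$, and note that this index set is a clique in $G^\sim$ by perfectness. Defining clique potentials $\psi_{\{v\}\cup\rmpa(v)}(x) := P(x_v\mid x_{\rmpa(v)})$ (and $\psi_C \equiv 1$ on every other clique) then realizes $P$ as a product of clique potentials, which is precisely the factorization property with respect to $G^\sim$.

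For the converse, I would proceed by induction in reverse topological order $n, n-1, \ldots, 1$, using the following key observation: since $G$ is perfect and topologically ordered, $v$ is simplicial in the induced subgraph $G^\sim[\{1,\ldots,v\}]$, because its undirected neighbors there are exactly $\rmpa(v)$, and these form a clique. Integrating out $x_v$ therefore absorbs the maximal clique potential containing $v$ into a new potential on $\rmpa(v)$, so the marginal on $\{1,\ldots,v-1\}$ factorizes with respect to the smaller induced subgraph, allowing the induction to continue. Meanwhile, the global Markov property applied in $G^\sim[\{1,\ldots,v\}]$ yields
\[
X_v \ind X_{\{1,\ldots,v-1\} \setminus \rmpa(v)} \,\bigm|\, X_{\rmpa(v)},
\]
so $P(x_v\mid x_1, \ldots, x_{v-1}) = P(x_v \mid x_{\rmpa(v)})$. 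Multiplying these conditionals along the topological order yields the desired recursive factorization.

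The hard part will be the marginalization step, namely verifying that the marginal on an ancestral set inherits a clique factorization with respect to the induced subgraph. This is the one place where the chordality of $G^\sim$ (equivalently, the fact that a perfect DAG has a perfect elimination ordering compatible with its topological order) is essential; everything else is bookkeeping about clique potentials. A shorter alternative is to invoke the general equivalence between recursive factorization with respect to $G$ and factorization with respect to the moral graph $G^m$, together with the identity $G^m = G^\sim$, which holds precisely when $G$ is perfect; the stated equivalence then follows immediately.
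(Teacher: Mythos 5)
The paper does not prove this proposition; it cites it verbatim from Lauritzen \cite{Graphical Models} (Proposition 3.28), so there is no in-paper argument to compare against. Your main argument is, however, essentially correct and is a standard way to establish the result. The forward direction is exactly right: perfectness makes $\{v\}\cup\rmpa(v)$ a clique of $G^\sim$, so the recursive factors are clique potentials. For the converse, your reverse-topological induction is sound precisely because the reverse topological order is a perfect elimination ordering of $G^\sim$ (which is therefore chordal): each vertex $v$ is simplicial in $G^\sim[\{1,\dots,v\}]$, so integrating out $x_v$ preserves a clique factorization on the smaller induced ancestral graph, and the global Markov property (which always follows from factorization, no positivity needed) then yields $P(x_v\mid x_1,\dots,x_{v-1})=P(x_v\mid x_{\rmpa(v)})$. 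The step you flag as ``the hard part'' is indeed where the work is, and your sketch of it is correct.

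One genuine error to fix: the ``shorter alternative'' at the end does not work, because there is no general equivalence between recursive factorization with respect to $G$ and factorization with respect to the moral graph $G^m$. Only the implication from recursive $G$-factorization to $G^m$-factorization holds unconditionally; the converse fails already for the collider $1\to 3\leftarrow 2$, whose moral graph is the full triangle, over which every joint distribution on three variables factorizes trivially, while recursive factorization with respect to $G$ forces $X_1 \ind X_2$. The nontrivial content of the proposition is precisely this missing converse direction, which is why the perfectness hypothesis and an argument such as your induction are needed; the moralization ``shortcut'' would be circular.
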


In other words, the probability distributions coming from $G$ and $G^\sim$ are equal. 
In particular, this implies that 
\[ 
I_G = P_{G^\sim}
\]
for perfect DAGs (where $P_H$ denotes the vanishing ideal of Gaussian graphical model
associated to the undirected graph $H$).
On the other hand, we know from \cite{Block graphs} that in the undirected case, 
$P_H$ is toric if  $H$ is a 1-clique sum of complete graphs (also called a block graph). Hence, we have the following result :

\begin{cor}\label{cor:no collider}
Let $G$ be a DAG whose undirected version $G^\sim$ is a block graph. 
If $G$ is perfect then $I_G$ is toric.   
\end{cor}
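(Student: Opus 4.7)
The plan is to chain the two results stated immediately before the corollary. First, I would invoke Proposition 3.28 of \cite{Graphical Models}: since $G$ is perfect, a probability distribution admits a recursive factorization with respect to $G$ if and only if it factorizes according to $G^\sim$. Specialized to the Gaussian setting, the Gaussian DAG model parametrized by the simple trek rule of Definition \ref{defn:simple trek rule} and the undirected Gaussian graphical model on $G^\sim$ are both characterized as the Gaussian distributions satisfying their respective factorization condition, so the two parametric families coincide as subsets of the positive-definite cone. Any two parametric families with the same image have the same vanishing ideal, and so
\[
I_G = P_{G^\sim}.
\]

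Next, I would apply Theorem \ref{thm:toric undirected} to the right-hand side. By hypothesis, each connected component of $G^\sim$ is a $1$-clique sum of complete graphs, so the theorem guarantees that $P_{G^\sim}$ is a toric ideal (in fact generated in degrees at most two). Combining the two facts yields that $I_G = P_{G^\sim}$ is toric, which is the desired conclusion.

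There is no genuine technical obstacle here — the substantive content has already been absorbed into the two cited results, and the corollary is essentially the observation that perfectness is the precise bridge which lets the undirected toric classification transfer to the directed setting. The one small point worth flagging in a written proof is the passage from ``the two parametric families agree'' to ``the two vanishing ideals agree,'' but this is immediate from the definition $I_G = \{ f \in \cc[\Sigma] : f((I-L)^{-T} D (I-L)^{-1}) = 0 \text{ for all admissible } L, D\}$, since a polynomial vanishes on the image of one parametrization exactly when it vanishes on the image of the other.
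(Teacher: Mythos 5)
Your proof is correct and follows exactly the paper's route: invoke Proposition 3.28 to get $I_G = P_{G^\sim}$ for a perfect DAG, then apply Theorem \ref{thm:toric undirected} to conclude $P_{G^\sim}$ is toric. The paper states this chain informally in the paragraph preceding the corollary rather than as a displayed proof; your only addition, the remark that equal parametric images yield equal vanishing ideals, is a harmless bit of bookkeeping that the paper leaves implicit.
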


We call a DAG $G$ where $G^\sim$ is a block graph and $G$ is perfect a 
\emph{perfect block DAG}.  Note that perfect block DAGs can be obtained by
gluing smaller perfect block DAGs together at a single vertex in such a way that 
no unshielded colliders are created.  

Corollaries \ref{cor:singlesinkglue} and \ref{cor:no collider}
give two different ways to glue DAGs together that have toric
vanishing ideals that preserve the toric property.  
Both methods consist of gluing the graphs at cliques of size one,
subject to some extra conditions.  
We generalize these criteria to obtain the safe gluing criteria in which a
DAG is obtained as an $N$-clique sum of two smaller DAGs so that the vanishing ideal is toric. To give the general definition of safe gluing, we first need to
recall the definition of a choke point.

\begin{defn}[Definition 4.1, \cite{Gaussian Networks}] A vertex $c\in V(G)$ is a 
\textit{choke point} between the sets $I$ and $J$ if every trek  from a 
vertex in $ I$ to a vertex in $J$ contains $c$ and either
\begin{enumerate}[i)]
    \item $c$ is on the $I$-side of every trek from $I$ to $J$, or
    \item $c$ is on the $J$-side of every trek from $I$ to $J$.
\end{enumerate}
\end{defn}

\begin{defn}
Let $G_1$ and $G_2$ be two DAGs.  Suppose that $G_1$ and $G_2$  share a common set of vertices 
$C = \{c \} \cup D$
 such that the induced subgraphs $G_1|_C$ and
$G_2|_C$ are the same and this common subgraph is a complete DAG (hence a clique).
The clique sum of $G_1$ and $G_2$ at $C$ is called a \emph{safe gluing} if
\begin{enumerate}[i)]
\item $c$ is a choke point between the sets 
$V(G_1)\setminus D$ and $V(G_2)\setminus D$ and
\item none of the treks between the vertices in $V(G_1)\setminus D$ 
and $V(G_2)\setminus D$ contain a vertex in $D$.
\end{enumerate}
\end{defn}

\begin{rmk}
Using the definition above, the gluing of $G_1$ and $G_2$ where there are no treks between the vertices of $V(G_1)\setminus C$ and $V(G_2)\setminus C$ can also be considered as a safe gluing.  Thus, both types of gluing operations implied by 
Corollaries \ref{cor:singlesinkglue} and \ref{cor:no collider} are safe
gluings.
\end{rmk}

We further illustrate the definition of safe gluing with an example.

\begin{ex}
Let $G_1$ and $G_2$ be two DAGs having a common $3$-clique at $\{1,4,5\}$ 
as shown in the figure \ref{fig:safe gluing at 3-clique}. 
Thus, $G$ is the DAG obtained after a safe gluing of $G_1$ and $G_2$ at the $3$-clique. 
Note that there is a single trek from $\{2\}$ to $\{3\}$ and that passes through 
$\{1\}$. Any other path from $\{2\}$ to $\{3\}$ containing $\{4\},\{5\}$ or 
both has a collider at $\{4\}$ or $\{5\}$. Hence, we can consider $D$ as the set $\{4,5\}$ and vertex $1$ as the choke point between the sets $\{1,2\}$ and $\{1,3\}$. Computing the vanishing ideal of 
$G$ gives us that $I_G=\langle \sigma_{12}\sigma_{13} -\sigma_{11}\sigma_{23} \rangle$, 
which is a toric ideal.

\begin{figure}
\begin{tikzpicture}
\filldraw[black]
(1,2) circle [radius=0]
(0,0) circle [radius=.06] node [left] {2}
(2,0) circle [radius=.06] node [below] {1}
(3,1.5) circle [radius=.06] node [above] {5}
(3,-1.5) circle [radius=.06] node [below] {4}
(2,-2) circle [radius=0] node [below] {$G_1$};
\draw
(0,0)--(2,0)
(0,0)--(3,1.5)
(0,0)--(3,-1.5)
(2,0)--(3,1.5)
(2,0)--(3,-1.5)
(3,1.5)--(3,-1.5);
\draw 
(1,0)--(1.2,-.2)
(1,0)--(1.2,.2)
(1.75,.9)--(1.4,.9)
(1.75,.9)--(1.65,.6)
(2.5,.7)--(2.5,.4)
(2.5,.7)--(2.2,.65)
(1.55,-.8)--(1.3,-.9)
(1.55,-.8)--(1.45,-.5)
(2.5,-.7)--(2.5,-.4)
(2.5,-.7)--(2.2,-.65)
(3,0)--(2.8,-.25)
(3,0)--(3.2,-.25);

\filldraw[black]
(5,0) circle [radius=.06] node [left] {1}
(6,1.5) circle [radius=.06] node [above] {5}
(6,-1.5) circle [radius=.06] node [below] {4}
(8,0) circle [radius=.06] node [right] {3}
(6.5,-2) circle [radius=0] node [below] {$G_2$};
\draw
(5,0)--(8,0)
(6,1.5)--(8,0)
(6,-1.5)--(8,0)
(5,0)--(6,1.5)
(5,0)--(6,-1.5)
(6,1.5)--(6,-1.5);
\draw 
(6.7,0)--(6.5,-.2)
(6.7,0)--(6.5,.2)
(6.8,.9)--(7.1,.9)
(6.8,.9)--(6.83,.6)
(5.5,.7)--(5.5,.4)
(5.5,.7)--(5.2,.65)
(6.8,-.9)--(7.1,-.9)
(6.8,-.9)--(6.83,-.6)
(5.5,-.7)--(5.5,-.4)
(5.5,-.7)--(5.2,-.65)
(6,-.1)--(5.8,-.35)
(6,-.1)--(6.2,-.35);

\filldraw[black]
(9.5,0) circle [radius=.06] node [left] {2}
(11.5,0) circle [radius=.06] node [below] {1}
(12.5,1.5) circle [radius=.06] node [above] {5}
(12.5,-1.5) circle [radius=.06] node [below] {4}
(14.5,0) circle [radius=.06] node [right] {3}
(12,-2) circle [radius=0] node [below] {$G$};
\draw
(9.5,0)--(14.5,0)
(9.5,0)--(12.5,1.5)
(9.5,0)--(12.5,-1.5)
(11.5,0)--(12.5,1.5)
(11.5,0)--(12.5,-1.5)
(12.5,1.5)--(12.5,-1.5)
(12.5,1.5)--(14.5,0)
(12.5,-1.5)--(14.5,0);

\draw 
(10.5,0)--(10.7,-.2)
(10.5,0)--(10.7,.2)
(11.25,.9)--(10.9,.9)
(11.25,.9)--(11.15,.6)
(12,.7)--(12,.4)
(12,.7)--(11.7,.65)
(11.05,-.8)--(10.8,-.9)
(11.05,-.8)--(10.95,-.5)
(12,-.7)--(12,-.4)
(12,-.7)--(11.7,-.65)
(12.5,-.1)--(12.3,-.35)
(12.5,-.1)--(12.7,-.35)
(13.3,.9)--(13.6,.9)
(13.3,.9)--(13.33,.6)
(13.3,-.9)--(13.6,-.9)
(13.3,-.9)--(13.33,-.6)
(13.2,0)--(13,-.2)
(13.2,0)--(13,.2);
\end{tikzpicture}
\caption{Safe gluing of $G_1$ and $G_2$ at a $3$-clique \label{fig:safe gluing at 3-clique}}
\end{figure}
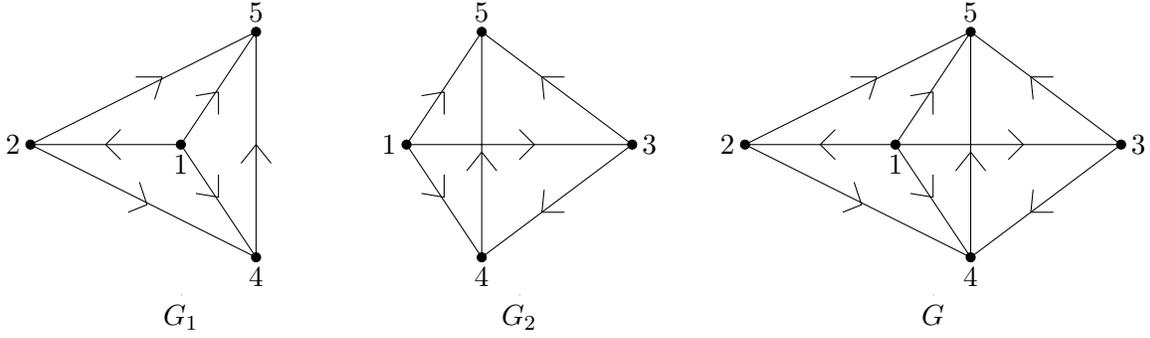
\end{ex}

We now look at some properties obtained from the safe gluing construction.

\begin{defn}
Let $G_1$ and $G_2$ be two DAGs, and suppose that $G$ is obtained from $G_1$ and $G_2$ by a safe
gluing at $C = \{c\} \cup D$.  This safe gluing is called a \emph{minimal safe gluing}
if we cannot find two other DAGs $G_1'$ and $G_2'$ such that $G$ is the safe gluing of
$G_1'$ and $G_2'$ at $\{c\} \cup D'$ with $D'$ a proper subset of $D$.
\end{defn}

\begin{ex}
Let $G$ be the DAG as shown in Figure \ref{figure:non minimal gluing}. If we take $G_1=\{1\rightarrow 3, 1\rightarrow 5, 3 \rightarrow 5\}$ and $G_2=\{2 \rightarrow 3, 2 \rightarrow 5, 3 \rightarrow 5, 4 \rightarrow 5\}$, then $G$ is a safe gluing of $G_1$ and $G_2$ with $C=\{3,5\}$. But this gluing is not minimal as we can take $G_1'=\{1\rightarrow 3, 1\rightarrow 5, 2 \rightarrow 3, 2 \rightarrow 5, 3 \rightarrow 5\}$ and $G_2'=\{4 \rightarrow 5\}$ such that $G$ is a safe gluing of $G_1'$ and $G_2'$ with $C'=\{5\} \subset C$.

\begin{figure}
\begin{tikzpicture}
\filldraw[black]
(0,0) circle [radius=.06] node [below] {$1$}
(2,0) circle [radius=.06] 
(2.3,0) circle [radius=0] node [below] {$5$}
(4,0) circle [radius=.06] node [below] {$2$}
(2,2) circle [radius=.06] node [above] {$3$}
(2,-2) circle [radius=.06] node [below] {$4$};

\draw
(0,0)--(4,0)
(0,0)--(2,2)
(2,2)--(2,-2)
(2,2)--(4,0);

\draw
(1.2,0)--(1,.2)
(1.2,0)--(1,-.2)
(2.8,0)--(3,.2)
(2.8,0)--(3,-.2)
(1.1,1.1)--(.8,1.1)
(1.1,1.1)--(1.1,.8)
(2.9,.8)--(2.9,1.1)
(2.9,1.1)--(3.2,1.1)
(2,.8)--(1.8,1)
(2,.8)--(2.2,1)
(2,-1)--(1.8,-1.2)
(2,-1)--(2.2,-1.2);
\end{tikzpicture}
\caption{Example of a non minimal gluing\label{figure:non minimal gluing}}
\end{figure}
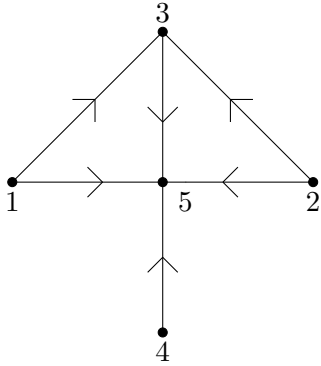
\end{ex}

One useful consequence of having a minimal safe gluing is that for any $d \in D$, there must exist a vertex $i \in V(G_1) \setminus C$ such that $i \rightarrow d$ is an edge (and analogously there is a $j \in V(G_2) \setminus C$). This is because if for some vertex $d\in D$ there does not exist any vertex in $V(G_1)\setminus C$ such that $i\rightarrow d$ is an edge, then it would mean that we can write $G$ as a safe gluing of $G_1'$ and $G_2$ at $C'=\{c\}\cup D\setminus \{d\}$ where $V(G_1')=V(G_1)\setminus \{d\}$. We use this observation for proving part $(ii)$ of Lemma \ref{lemma: n-1 clique}.

\begin{lemma}\label{lemma: n-1 clique}
Let $G_1$ and $G_2$ be two DAGs 
and 
$G$ be the resultant DAG obtained after a safe gluing of $G_1$ and $G_2$ at an 
$N$-clique. Let $C= \{c\} \cup D$ be the vertices in the $N$-clique.
\begin{enumerate}[i)]
    \item Every trek from a vertex in  
    $V(G_1)\setminus D$ to a vertex in  $V(G_2)\setminus D $ 
    must have the topmost vertex 
    (i.e, the source vertex) either always in  $G_1$ or always in $G_2$.
    \item If the gluing is minimal, then for each $d \in D$, we must have the edge $c \to d$.
    \end{enumerate}
\end{lemma}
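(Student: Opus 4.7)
The plan for part (i) is to exploit the choke point property of $c$ guaranteed by the safe gluing definition. Let $I = V(G_1) \setminus D$ and $J = V(G_2) \setminus D$. The choke point definition supplies a dichotomy: either $c$ lies on the $I$-side of every trek from $I$ to $J$, or $c$ lies on the $J$-side of every such trek. By symmetry I would treat only the second case and argue that $\rmtop(P) \in V(G_1)$ uniformly for every such trek $P$; the first case yields $\rmtop(P) \in V(G_2)$ uniformly. Fix a trek $P$ from $i \in I$ to $j \in J$ and write $P = (P_i, P_j)$. If $\rmtop(P) = c$, then $\rmtop(P) \in V(G_1)$ and there is nothing to do. Otherwise $c \in P_j$ with $c \neq \rmtop(P)$, so $c$ appears strictly below the top on the $J$-side and hence $c \notin P_i$.

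The crux of (i) is to pin down the location of $P_i$. Condition (ii) of safe gluing forces $P_i$ to avoid every vertex of $D$, and combined with $c \notin P_i$ this gives $P_i \cap C = \emptyset$. Since $E(G) = E(G_1) \cup E(G_2)$ and each edge of $G_k$ has both endpoints in $V(G_k)$, no edge joins a vertex of $V(G_1) \setminus C$ to a vertex of $V(G_2) \setminus C$. So any transition of the directed path $P_i$ between $V(G_1) \setminus C$ and $V(G_2) \setminus C$ would have to go through $C$; as $P_i$ avoids $C$ entirely, it lies wholly in $V(G_1) \setminus C$ or wholly in $V(G_2) \setminus C$. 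Its endpoint $i$ lies in $V(G_1) \setminus C$, so $P_i \subseteq V(G_1)$ and in particular $\rmtop(P) \in V(G_1)$, as desired.

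For part (ii) the plan is a short contradiction argument using the minimality consequence noted just before the lemma. Suppose the gluing is minimal but $c \to d \notin E(G)$ for some $d \in D$. Since $G|_C$ is a complete DAG, $c$ and $d$ are adjacent, so the only remaining possibility is that $d \to c$ is an edge. By minimality, there exists $j \in V(G_2) \setminus C$ with $j \to d$ an edge of $G_2$. Then $j \to d \to c$ is a directed path in $G$, hence a simple trek from $j \in V(G_2) \setminus D$ to $c \in V(G_1) \setminus D$. But this trek passes through $d \in D$, contradicting condition (ii) of safe gluing. Therefore $c \to d \in E(G)$ for every $d \in D$.

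The main subtlety lies in part (i), specifically in the combinatorial bookkeeping that ensures $P_i$ cannot jump between the two sides: one must combine the trek-avoidance of $D$ from condition (ii), the strict placement of $c$ on the $J$-side, and the fact that edges respect the $V(G_1)/V(G_2)$ partition. Once this is set up, both (i) and (ii) follow essentially formally, with (ii) reducing immediately to exhibiting the forbidden trek $j \to d \to c$.
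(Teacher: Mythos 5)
Your proof is correct. For part (i), your approach is slightly different in form from the paper's: the paper argues by contradiction (assume two treks whose tops lie in $V(G_1)\setminus D$ and $V(G_2)\setminus D$ respectively, then show $c$ would have to sit on opposite sides of the two treks, contradicting the choke-point dichotomy), whereas you argue directly by fixing one branch of the dichotomy and tracing the location of $P_i$. Both hinge on the same two facts — the choke-point dichotomy and the observation that a trek avoiding $D$ cannot cross from $V(G_1)\setminus C$ to $V(G_2)\setminus C$ except through $c$ — but you spell out the second fact more explicitly than the paper does, which makes the argument more self-contained.

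For part (ii), your route is genuinely simpler than the paper's. The paper, after assuming $d\to c\in E(G)$, first shows $d$ has no outgoing edges to $V(G_i)\setminus C$ (a fact it never uses), then invokes minimality to produce $s_1\in V(G_1)\setminus C$ and $s_2\in V(G_2)\setminus C$ with $s_1\to d$ and $s_2\to d$, and finally derives a contradiction with the choke-point condition by analyzing which side $c$ lies on in treks $s_1\rightleftharpoons c$ and $c\rightleftharpoons s_2$. You instead observe directly that $j\to d\to c$ (with $j\in V(G_2)\setminus C$ supplied by minimality) is a simple trek from $V(G_2)\setminus D$ to $V(G_1)\setminus D$ passing through $d\in D$, contradicting condition (ii) of the safe gluing definition on the spot. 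This is the same mechanism the paper uses in its (unused) first step, just aimed at the right trek; it bypasses the choke-point analysis entirely. A clean shortcut.
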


\begin{proof}
\textbf{i)} To show this, let us assume that there are two treks 
$i_1\rightleftharpoons j_1$ and $i_2\rightleftharpoons j_2$ with 
$i_1,i_2\in V(G_1)\setminus D$ and $j_1,j_2\in V(G_2)\setminus D$ 
such that $\rmtop(i_1\rightleftharpoons j_1)$ lies in 
$V(G_1)\setminus D$ and $\rmtop(i_2\rightleftharpoons j_2)$ 
lies in $V(G_2)\setminus D$. 
Since $c$ must lie in these treks, since it is a choke point, 
this would imply that $c$ lies in the $G_1$-side of $i_1\rightleftharpoons j_1$ and the 
$G_2-$side of $i_2\rightleftharpoons j_2$.  That contradicts that  $c$ is a choke point. 

\textbf{ii)} Let us assume by way of contradiction
that $d\rightarrow c$ is an edge for some $d \in D$.
Since $G$ is obtained from a safe gluing, there are no edges that go from $d$ to any vertex in
$V(G_1) \setminus C$ or $V(G_2) \setminus C$.  For if there were such an edge $d \to i$, there would be a trek $c \leftarrow d \rightarrow i$ contradicting the definition of safe gluing.

Now as the gluing is minimal,
there must be vertices $s_1$ and $s_2$ in $G_1\setminus C$ and $G_2\setminus C$, respectively,
such that $s_1\rightarrow d$ and $s_2\rightarrow d$ are two edges in $E(G)$. 
By the definition of safe gluing, we know that $c$ must be a choke point between the sets 
$\{s_1,c\}$ and $\{s_2,c\}$. We consider the treks $s_1\rightleftharpoons c$ and 
$c\rightleftharpoons s_2$. 
As $s_1\rightarrow d\rightarrow c$ is already a trek, 
we cannot have any trek of the form 
$c\rightarrow t_1 \rightarrow t_2\rightarrow \cdots \rightarrow s_1$ 
(else it would form a cycle). 
So, $c$ always lies in the $G_2$-side of any trek $s_1\rightleftharpoons c$. 
Similarly, as $s_2\rightarrow d \rightarrow c$ is already a trek, we cannot have any trek of the form 
$c\rightarrow r_1\rightarrow r_2 \rightarrow \cdots \rightarrow s_2$. 
So, $c$ lies in the $G_1$-side of the treks $c_1\rightleftharpoons s_2$, which is a contradiction. 
\end{proof}

The observations in Lemma \ref{lemma: n-1 clique} 
are helpful for ruling out various bad scenarios 
as we work to prove results about the preservation of the toric property
for DAGs under safe gluing.

Our main aim in this section is to check that if 
$G_1$ and $G_2$ have toric vanishing ideals then a safe gluing of 
$G_1$ and $G_2$ would give us a DAG $G$ whose vanishing ideal is also toric. 
From the structure of $G$ we know that every trek between a vertex 
$i \in G_1\setminus C$ and $j \in G_2\setminus C$ passes through the choke point $c$. 
This allows us to decompose the treks $i\rightleftharpoons j$ as 
$i\rightleftharpoons c\cup c\rightleftharpoons j$. 
So, if we assume that $I_{G_1}=ST_{G_1}$ and $I_{G_2}=ST_{G_2}$, 
then this would imply that the shortest trek map is well defined for $G$ as well. Thus, we give the following conjecture :

\begin{conj}\label{conj:safe gluing conjecture}
Let $G_1$ and $G_2$ be two DAGs having toric vanishing ideals such that $I_{G_1}$ equals $ST_{G_1}$ and $I_{G_2}$ equals $ST_{G_2}$. If $G$ is the DAG obtained by a safe gluing of $G_1$ and $G_2$ at an $N$-clique, then $I_G$ is equal to $ST_G$ and hence is toric.
\end{conj}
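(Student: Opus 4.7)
My approach is to reduce the equality $I_G = ST_G$ to the assumed equalities on $G_1$ and $G_2$ by factoring every relevant trek through the choke point $c$. I proceed in three stages.

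\emph{Stage 1: Well-definedness of $\psi_G$.} For a pair $i,j \in V(G_k)$ lying on the same side of the gluing, I would argue that any shortest trek in $G$ stays entirely inside $G_k$: a detour through the opposite side must enter and leave the clique $C$, contributing extra edges that a direct trek in $G_k$ would avoid (and by the safe gluing hypothesis, such detours cannot shortcut through $D$). For a pair with $i \in V(G_1)\setminus D$ and $j \in V(G_2)\setminus D$, every trek passes through $c$, and by Lemma \ref{lemma: n-1 clique}(i) its top lies on a single side; assuming WLOG that $c$ is on the $i$-side, the shortest trek decomposes uniquely as the shortest trek in $G_1$ from its top vertex down through $c$ to $i$, concatenated with the shortest directed path in $G_2$ from $c$ to $j$. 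Uniqueness in $G$ propagates from uniqueness on each side.

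\emph{Stage 2: Crossing binomials.} Because every trek from such a pair $i,j$ passes through $c$ with top on a single side, the simple trek rule factors, giving
\[
\sigma_{cc} \cdot \phi_G(\sigma_{ij}) \;=\; \phi_G(\sigma_{ic}) \cdot \phi_G(\sigma_{cj}),
\]
so that $g_{ij} := \sigma_{cc}\sigma_{ij} - \sigma_{ic}\sigma_{cj} \in I_G$. The analogous binomial lies in $ST_G$ since the unique shortest trek from $i$ to $j$ is exactly the concatenation of the shortest treks from $i$ to $c$ and $c$ to $j$. In the degenerate case that no trek between $i$ and $j$ exists, one gets the monomial $\sigma_{ij}$ in both ideals.

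\emph{Stage 3: Reduction to the sides.} I would aim to prove the structural decomposition
\[
I_G \;=\; I_{G_1} + I_{G_2} + J \qquad\text{and}\qquad ST_G \;=\; ST_{G_1} + ST_{G_2} + J,
\]
where $J$ is the ideal generated by the crossing binomials $g_{ij}$ (and any crossing monomials). Given this, the hypotheses $I_{G_k} = ST_{G_k}$ yield $I_G = ST_G$ at once. To establish the decomposition, my plan is to eliminate the ``crossing variables'' $\sigma_{ij}$ (with $i,j$ on opposite sides of the gluing) using the $g_{ij}$ after localizing at $\sigma_{cc}$, reducing the quotient to two essentially independent problems supported on the ``pure'' $G_1$ and $G_2$ variables, whose kernels must agree with $I_{G_k}$ by a standard comparison of parametrizations.

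The principal obstacle is Stage 3. Treks internal to $V(G_1)$ (respectively $V(G_2)$) can, a priori, leave $G_1$ via the clique $C$ and return, contributing extra monomials to $\phi_G(\sigma_{ij})$ beyond $\phi_{G_1}(\sigma_{ij})$; controlling these contributions and checking that they produce no new relations beyond those already supplied by $I_{G_1}$, $I_{G_2}$, and the $g_{ij}$ is the key technical difficulty. Lemma \ref{lemma: n-1 clique}(ii), which guarantees the edges $c \to d$ for every $d \in D$ in a minimal gluing, is the structural handle that should make this bookkeeping tractable by allowing such detours to be rerouted through $c$ using the crossing binomials. A fully general argument will likely require induction on $|D|$ (with the base case $D = \emptyset$ recovering Corollaries \ref{cor:singlesinkglue} and \ref{cor:no collider}) and possibly a Gr\"obner/initial-ideal argument with respect to an order that selects the shortest-trek monomial as the leading term---which is precisely where the identity $I_G = ST_G$ ought to become visible.
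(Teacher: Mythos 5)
This statement is Conjecture~\ref{conj:safe gluing conjecture}, which the paper does \emph{not} prove; the authors explicitly state that they cannot prove it in general and instead offer partial results. Theorem~\ref{thm:safe gluing at n clique} proves it under the additional hypothesis that the generators of $I_{G_1}$ and $I_{G_2}$ share at most the indeterminate $\sigma_{cc}$, and Theorem~\ref{thm:ker(psi_G)=CI_G} proves it under the strictly stronger hypothesis $ST_{G_i}=CI_{G_i}$. So there is no ``paper's own proof'' to compare against; you have been handed an open problem.

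With that caveat, your plan is well-aligned with the paper's partial proof. Stages~1 and~2 are sound: the unique shortest trek in $G$ between $i\in V(G_1)\setminus D$ and $j\in V(G_2)\setminus D$ does factor through $c$ by the choke-point condition and Lemma~\ref{lemma: n-1 clique}(i), and the crossing binomial $\sigma_{cc}\sigma_{ij}-\sigma_{ic}\sigma_{cj}$ lies in both $I_G$ (by trek separation / the CI statement $i\indep j\mid c$) and $ST_G$ (by concatenation of shortest treks). The localization-at-$\sigma_{cc}$ idea in Stage~3 is exactly what Case~I of Theorem~\ref{thm:safe gluing at n clique} does: write $\sigma_{cc}^r f \equiv \sigma^{u_1}\sigma^{u_2}-\sigma_{cc}^m\sigma^{v_1}\sigma^{v_2}$ modulo the crossing binomials, split into a $G_1$-piece and a $G_2$-piece, and finish with primeness of $I_G$ and dimension counting via Proposition~\ref{prop:dim of STG}.

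The genuine gap is precisely where you flag it, but I would identify the obstruction more sharply than you do. The clean decomposition you target, $I_G = I_{G_1}+I_{G_2}+J$ and $ST_G = ST_{G_1}+ST_{G_2}+J$, is only established in the paper when there are \emph{no} treks crossing the gluing (Theorem~\ref{thm:safe gluing at n clique}, Case~II); when crossing treks exist, the paper avoids claiming it. What actually goes wrong without the paper's extra hypothesis is the splitting step: after you rewrite $\sigma_{cc}^r f$ modulo $J$ as $\sigma^{u_1}\sigma^{u_2}-\sigma_{cc}^m\sigma^{v_1}\sigma^{v_2}$, you want to argue that $\sigma^{u_1}-\sigma_{cc}^{m_1}\sigma^{v_1}\in ST_{G_1}$ and $\sigma^{u_2}-\sigma_{cc}^{m_2}\sigma^{v_2}\in ST_{G_2}$ separately. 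That argument relies on the two halves living in rings with disjoint parameters (except $a_c$), which fails if an indeterminate $\sigma_{c_ic_j}$ with $c_i,c_j\in C$, $D\neq\emptyset$, appears among the generators on \emph{both} sides: then the monomial cannot be cleanly assigned to $G_1$ or $G_2$, and the two would-be binomials need not individually lie in the corresponding kernels. This is exactly the entanglement that the paper's extra hypothesis (only $\sigma_{cc}$ shared) is designed to rule out, and Example~\ref{ex:non chordal safe gluing} is the authors' evidence that the conclusion should nonetheless hold even when a $\sigma_{cd}$ is shared. Your Gr\"obner/initial-ideal suggestion is a reasonable alternative avenue, but be aware that the authors' own route around this, Theorem~\ref{thm:ker(psi_G)=CI_G}, changes the hypothesis rather than the method: it upgrades $I_{G_i}=ST_{G_i}$ to $ST_{G_i}=CI_{G_i}$ so that a fiber-graph connectivity argument with explicit quadratic moves (Lemmas~\ref{lemma:match rho1} and~\ref{lemma:final binomials}) can be run.
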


Although we do not have a proof of Conjecture \ref{conj:safe gluing conjecture}, 
we provide a proof when $I_{G_1}$ and $I_{G_2}$ satisfy an extra condition.

\begin{thm}\label{thm:safe gluing at n clique}
Let $G_1$ and $G_2$ be two DAGs such that $I_{G_1}$ equals $ST_{G_1}$ and $I_{G_2}$ equals $ST_{G_2}$. 
Let $G$ be the DAG obtained by a safe gluing of $G_1$ and $G_2$ at an $N$-clique and $c$ be the choke point. 
If the generators of $I_{G_1}$ and $I_{G_2}$ have at most one common indeterminate $\sigma_{cc}$,
then $I_G$ is equal to $ST_G$ and hence is toric.
\end{thm}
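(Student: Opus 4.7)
The plan is to prove the chain of equalities
\[
I_G \;=\; I_{G_1}+I_{G_2}+J \;=\; ST_{G_1}+ST_{G_2}+J \;=\; ST_G,
\]
where $J=\langle \sigma_{cc}\sigma_{ij}-\sigma_{ic}\sigma_{cj} : i\in V(G_1)\setminus C,\, j\in V(G_2)\setminus C\rangle$ collects the choke point relations at $c$. The middle equality follows immediately from the hypotheses $I_{G_k}=ST_{G_k}$, so the real work is to show $\psi_G$ is well-defined and to establish the outer two equalities.

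First I would check that $\psi_G$ is well-defined on $G$. By Lemma \ref{lemma: n-1 clique}(i), every cross trek $i\rightleftharpoons j$ with $i\in V(G_1)\setminus C$ and $j\in V(G_2)\setminus C$ passes through $c$ with its topmost vertex uniformly on one side, hence decomposes at $c$ into a $G_1$-trek from $i$ to $c$ concatenated with a directed $G_2$-path from $c$ to $j$ (or the symmetric case). Uniqueness of the shortest trek from $i$ to $c$ in $G_1$ and from $c$ to $j$ in $G_2$ (guaranteed by $I_{G_k}=ST_{G_k}$) then yields a unique shortest cross trek. For pairs within a single side, any detour through the opposite side is strictly longer than the corresponding $G_k$-internal shortest trek whenever one exists, so uniqueness is preserved.

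The forward containment $I_{G_1}+I_{G_2}+J\subseteq I_G$ splits into two pieces. The inclusion $J\subseteq I_G$ follows from the choke point property forcing the submatrix $\Sigma_{V(G_1)\setminus D,\,V(G_2)\setminus D}$ to have rank one under $\phi_G$. For $I_{G_k}\subseteq I_G$, the common-indeterminate hypothesis guarantees that each minimal generator of $I_{G_k}$ involves no $C$-variable other than possibly $\sigma_{cc}$; evaluating such a generator under $\phi_G$ produces the same polynomial as $\phi_{G_k}$ in the $G_k$-parameters, while cross-over treks contribute only monomials in opposite-side parameters that are annihilated independently by the hypothesis. The analogous argument with $\psi_G$ delivers $ST_{G_1}+ST_{G_2}+J\subseteq ST_G$.

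The crux of the argument is the reverse containment $I_G\subseteq I_{G_1}+I_{G_2}+J$. Given $f\in I_G$, I would use the binomials in $J$ to eliminate each cross variable via the substitution $\sigma_{ij}\leftrightarrow \sigma_{ic}\sigma_{cj}/\sigma_{cc}$; after clearing $\sigma_{cc}$-denominators this yields $\sigma_{cc}^N f \equiv g\pmod{J}$ where $g$ is supported entirely on $R_1\cup R_2$. Because $\sigma_{cc}$ is a non-zero-divisor modulo $I_G$ (the model meets the positive-definite cone), we conclude $g\in I_G$. The hard part is then to show $g\in I_{G_1}+I_{G_2}$: project $\phi_G(g)$ onto the $G_1$-parameter subring by zeroing out the $G_2$-only parameters and symmetrically onto the $G_2$-parameters; the common-indeterminate hypothesis is exactly what guarantees that these two projections reassemble to $g$ without any stray interaction beyond the central variable $\sigma_{cc}$. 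Running the same elimination-and-splitting procedure with $\psi_G$ in place of $\phi_G$ produces the parallel decomposition for $ST_G$, completing the proof.
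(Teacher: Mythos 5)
Your plan asserts a stronger statement than the paper proves, and the extra strength is exactly where the argument breaks down. The paper never proves the explicit decomposition $I_G=I_{G_1}+I_{G_2}+J$ directly; it only shows the easy containment $ST_G\subseteq I_G$ and then invokes dimension $n+e$ together with primality of both ideals to close the gap. You are instead attempting the hard containment $I_G\subseteq I_{G_1}+I_{G_2}+J$, and your argument for it does not go through.

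Two specific problems. First, your choice of $J$ is wrong when no trek connects $V(G_1)\setminus C$ to $V(G_2)\setminus C$. The safe-gluing definition allows $c$ to be a choke point vacuously (condition (i) holds because there are no cross treks at all). In that case $\phi_G(\sigma_{ij})=0$, so $\sigma_{ij}$ itself must appear among the generators of $I_G$, but $\phi_G(\sigma_{cc}\sigma_{ij}-\sigma_{ic}\sigma_{cj})=-\phi_G(\sigma_{ic})\phi_G(\sigma_{cj})$ which is typically nonzero (take $1\to c$ in $G_1$, $2\to c$ in $G_2$). So your $J\not\subseteq I_G$ here; the paper handles this by splitting into a separate Case II where the cross indeterminates are added as degree-one generators rather than as rank-one binomials. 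Second, the crux step is underspecified in a way that hides a real difficulty. Given a general polynomial $g\in I_G$ supported on $G_1$- and $G_2$-indeterminates, saying you will ``project $\phi_G(g)$ onto the $G_1$-parameter subring'' does nothing, since $\phi_G(g)$ is already zero; setting the $G_2$-only parameters to zero shows only that the specialization $g|_{\sigma_{G_2\text{-only}}=0}$ lies in $I_{G_1}$, which does not imply $g\in I_{G_1}+I_{G_2}$ (cross-term cancellation in $\phi_G(g)=0$ is possible for non-binomial $g$). The paper sidesteps this entirely by working with a primitive \emph{binomial} $f=\sigma^u-\sigma^v\in ST_G$: after the same clearing of cross variables and $\sigma_{cc}$-powers, the binomial splits as $\sigma^{u_1}\sigma^{u_2}-\sigma^{v_1}\sigma^{v_2}\sigma_{cc}^m$ with the $G_1$- and $G_2$-parts involving disjoint parameter sets (the common-indeterminate hypothesis at work), so each factor-binomial $\sigma^{u_1}-\sigma^{v_1}\sigma_{cc}^{m_1}$ and $\sigma^{u_2}-\sigma^{v_2}\sigma_{cc}^{m_2}$ is forced into $ST_{G_1}$ and $ST_{G_2}$ respectively by the monomial structure of $\psi_G$. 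No analogue of that clean splitting exists for $\phi_G$, which is not a monomial map, so the approach you outline for arbitrary $f\in I_G$ is not recoverable. The fix is to prove only $ST_G\subseteq I_G$ (via the binomial argument), let dimension and primality finish the job, and treat the no-cross-trek case separately.
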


\begin{proof}
Let $C= \{c \}\cup D$ be the $N$-clique where $G_1$ and $G_2$ are glued. We break the problem into two
cases:  The first case is when the vertex $c$ lies on some treks from $V(G_1) \setminus C$ to
$V(G_2) \setminus C$.  The second case is when there are no such treks.

\textbf{Case I} : The choke point $c\in C$ is on some trek from $V(G_1) \setminus C$ to
$V(G_2) \setminus C$.  In particular, it will be a non-collider vertex along that path.

As $c$ is the only vertex in $C$ that can be on some trek from $V(G_1) \setminus C$ to
$V(G_2) \setminus C$, 
no trek between any two vertices in $V(G_1)\setminus C$ passes through a vertex in $V(G_2)\setminus C$ 
(and similarly similarly for vertices in $V(G_2)\setminus C$). 
Further, $ST_{G_1}$ equals $I_{G_1}$, which implies that there exists a 
unique shortest trek (or no trek) between any two vertices in $G_1$ (similarly for $G_2$). 
Now, from the structure of $G$ we know that every trek between a vertex in 
$V(G_1)\setminus C$ and 
$V(G_2)\setminus C$ must pass through $c$. 
So, we can write the shortest trek map of $G$ as follows :
\begin{eqnarray*}
\psi_G(\sigma_{ij})= \begin{cases} \hspace{1cm}\psi_{G_1}(\sigma_{ij}) \hspace{1cm} : i,j \in V(G_1) \\
\hspace{1cm}\psi_{G_2}(\sigma_{ij}) \hspace{1cm} : i, j \in V(G_2)  \\
\ddfrac{\psi_{G_1}(\sigma_{ic}).\psi_{G_2}(\sigma_{c_1j})}{a_{c}} \hspace{.4cm} : i \in V(G_1)\setminus C, j\in V(G_2)\setminus C .
\end{cases} 
\end{eqnarray*}

Also, we know that the conditional independence statement $i \indep j | c$ holds for all 
$i \in V(G_1)\setminus C$ and $j\in V(G_2) \setminus C$. 
So $\sigma_{ic}\sigma_{cj}-\sigma_{ij}\sigma_{cc} $ lies in both 
$I_G$ and $ST_G$ for all $i \in V(G_1)\setminus C$ and $j\in V(G_2)\setminus C$. 

The vanishing ideals $I_{G_1}$ and $I_{G_2}$ lie in the polynomial rings 
$\mathbb{C}[\sigma_{ij}: i,j \in V(G_1)]$ and $\mathbb{C}[\sigma_{ij}: i,j \in V(G_2)]$ respectively, 
where the common indeterminates are of the form $\sigma_{c_ic_j}, c_i,c_j\in C$. 
But from the assumption, we know that $\sigma_{cc}$ can be the only common indeterminate 
among the generators of $I_{G_1}$ and $I_{G_2}$. 
So, without loss of generality, we can treat the ideals $I_{G_1}$ and $I_{G_2}$ as if they
lie in different rings, that contain enough indeterminates for all their generators.  In particular,
we can treat the ideals as if they belong to:
\begin{eqnarray*}
I_{G_1}&\subseteq& \mathbb{C}[\sigma_{ij}: i,j \in V(G_1)\setminus D] \text{ and} \\
I_{G_2}&\subseteq& \mathbb{C}[\sigma_{ij}: i,j \in V(G_2)].
\end{eqnarray*}

Note that there is only the indeterminate $\sigma_{cc}$ common between the two rings $\mathbb{C}[\sigma_{ij}: i,j \in V(G_1)\setminus D] $ and $\mathbb{C}[\sigma_{ij}: i,j \in V(G_2)]$.

Now, let $f=\sigma^u-\sigma^v$ be any binomial in a generating set of $ST_G$ consisting of primitive binomials. 
Suppose that $i\in V(G_1)\setminus C$ and $j\in V(G_2)\setminus C$.  We can
replace $\sigma_{ij}$ with $\frac{\sigma_{ic}\sigma_{cj}}{\sigma_{cc}}$  in both $\sigma^u$ and $\sigma^v$. 
Multiplying enough powers of $\sigma_{cc}$, we get
\[
\sigma_{cc}^r f=\sigma^{u_1}\sigma^{u_2}-\sigma^{v_1}\sigma^{v_2} \sigma_{cc}^m,
\]
(modulo the quadratic generators $\sigma_{ic}\sigma_{cj}-\sigma_{ij}\sigma_{cc} $ that
belong to $I_G$), 
where $\sigma^{u_1},\sigma^{v_1}\in \mathbb{C}[\sigma_{ij}: i,j \in V(G_1)\setminus D]$ 
and $\sigma^{u_2},\sigma^{v_2} \in \mathbb{C}[\sigma_{ij}:i,j \in V(G_2)]$, but none
of $\sigma^{u_1},\sigma^{v_1}, \sigma^{u_2},\sigma^{v_2}$ involve the indeterminate $\sigma_{cc}$.

We can split the monomial $\sigma_{cc}^m =  \sigma_{cc}^{m_1} \sigma_{cc}^{m_2}$ so that the two
binomials 
\[
\sigma^{u_1} - \sigma^{v_1} \sigma_{cc}^{m_1}  \quad \quad \mbox{ and } \quad \quad 
\sigma^{u_2} - \sigma^{v_2} \sigma_{cc}^{m_2}
\]
are homogeneous.  Since all the indeterminates appearing in $\sigma^{u_1}$  and $\sigma^{v_1}$ 
involve parameters from the graph $G_1$ with no overlap with parameters from $G_2$ (except possibly $a_{cc}$)
we see that if $\sigma^{u_1}\sigma^{u_2}-\sigma^{v_1}\sigma^{v_2} \sigma_{cc}^m$ belongs to $ST_G$,
it must be the case that $\sigma^{u_1} - \sigma^{v_1} \sigma_{cc}^{m_1}$ belongs to $ST_{G_1}$.
Then if $\sigma^{u_1}\sigma^{u_2}-\sigma^{v_1}\sigma^{v_2} \sigma_{cc}^m$ is to belong to 
$ST_G$, then it must also be the case that $\sigma^{u_2} - \sigma^{v_2} \sigma_{cc}^{m_2}$ belongs to 
$ST_{G_2}$.  

Now, modulo the quadratic generators $\sigma_{ic}\sigma_{cj}-\sigma_{ij}\sigma_{cc} $ that
belong to $I_G$, we have that
\begin{eqnarray*}
\sigma_{cc}^r f &=& \sigma^{u_1}(\sigma^{u_2}-\sigma_{cc}^{m_2}\sigma^{v_2})
                  + \sigma_{cc}^{m_2}\sigma^{v_2}(\sigma^{u_1}-\sigma_{cc}^{m_1}\sigma^{v_1}) \\
&\in& ST_{G_1}+ ST_{G_2} \\
&=& I_{G_1}+I_{G_2} \subseteq I_G.
\end{eqnarray*}

Thus, $\sigma_{cc}^r f \in I_G$.  As $I_G$ is a prime ideal, that does not contain $\sigma_{cc}$, we deduce
that $f \in I_G$.   This implies that $ST_G\subseteq I_G$. 
The vanishing ideal $I_G$ is well-known to have dimension $n + e$, as the model is
identifiable.  The dimension of $ST_G$ equals $n+e$ by
Proposition \ref{prop:dim of STG}.
But as the dimension of $I_G$ equals the dimension of $ST_G$,
both ideals are prime, and $ST_G\subseteq I_G$, 
we can conclude that $I_G= ST_G$.

\textbf{Case II}: 
There are no treks between the vertices of $V(G_1)\setminus C$ and $V(G_2)\setminus C$ : 
In this case, the shortest trek map $\psi_G$ can be written as :

\begin{eqnarray*}
\psi_G(\sigma_{ij})= \begin{cases} \psi_{G_1}(\sigma_{ij}) :  i,j \in V(G_1) \\
\psi_{G_2}(\sigma_{ij}) :  i, j \in V(G_2)  \\
\hspace{.4cm} 0 \hspace{.8cm}:  i \in V(G_1)\setminus C, j\in V(G_2)\setminus C .
\end{cases} 
\end{eqnarray*}

We claim that $ST_G$ in this case is  
\[
ST_G= ST_{G_1}+ ST_{G_2}+\langle \sigma_{ij}:i\in V(G_1)\setminus C, j\in V(G_2)\setminus C \rangle . 
\]

We prove this equality in the same way as the proof of Proposition \ref{prop:collider at m}. 
We have $\psi_G(\sigma_{ij})=0$ for all $i\in V(G_1)\setminus C$ and $j\in V(G_2)\setminus C$. 
By our assumption we know that none of the indeterminates of the form 
$\sigma_{cd}$ or $\sigma_{d, d'},$ with $d, d' \in D$ can appear among any of the generators of 
$ST_{G_1}$. 
Also in this case, $\sigma_{cc}$ cannot appear in 
$ST_{G_1}$ or $ST_{G_2}$ as $\psi_{G_1}(\sigma_{cc})=\psi_{G_2}(\sigma_{cc})=a_{c}$ 
and no treks involving $i \in V(G_1) \setminus C$ or $j \in V(G_2) \setminus C$ can have $c$ as its source. 
So, for any $\sigma_{ij}, i\in V(G_1)\setminus C, j\in V(G_1)$ and 
$\sigma_{kl},k \in V(G_2),l\in V(G_2)\setminus \{c\}$ which appear in $ST_G$, 
$\psi_G(\sigma_{ij})$ and $\psi_G(\sigma_{kl})$ are monomials in two polynomial rings 
having disjoint indeterminates. Thus, we have a partition of the indeterminates $\sigma_{ij}$ into three sets 
\begin{eqnarray*}
A_1&=& \{\sigma_{ij}: i\in V(G_1), j\in V(G_1)\setminus C\}, \\
A_2&=& \{\sigma_{ij}: i,j\in V(G_2)\} \text{ and} \\ 
A_3&=& \{\sigma_{ij}: i\in V(G_1)\setminus C,j\in V(G_2)\setminus C \},
\end{eqnarray*}
in which the image $\psi_G(\sigma_{ij})$ appears in disjoint sets of indeterminates. Further, there can be no nontrivial relations involving two or more of these three sets of indeterminates. So, the equality in the above equation holds. 

But then, $ST_{G_1}=I_{G_1}$ and $ST_{G_2}=I_{G_2}$. Thus, we have  
\begin{eqnarray*}
ST_G &=&I_{G_1}+I_{G_2}+ \langle \sigma_{ij}:i\in V(G_1)\setminus C, j\in V(G_2)\setminus C \rangle \\
&\subseteq&  I_G. 
\end{eqnarray*}
As both the ideals are prime and have the same dimension, $ST_G=I_G$.
\end{proof}

Although Theorem \ref{thm:safe gluing at n clique} uses the assumption that only
$\sigma_{cc}$ appears among the generators of both $I_{G_1}$ and $I_{G_2}$, 
we believe that the safe gluing would yield a toric vanishing ideal even without that assumption. 
We illustrate this point with an example.

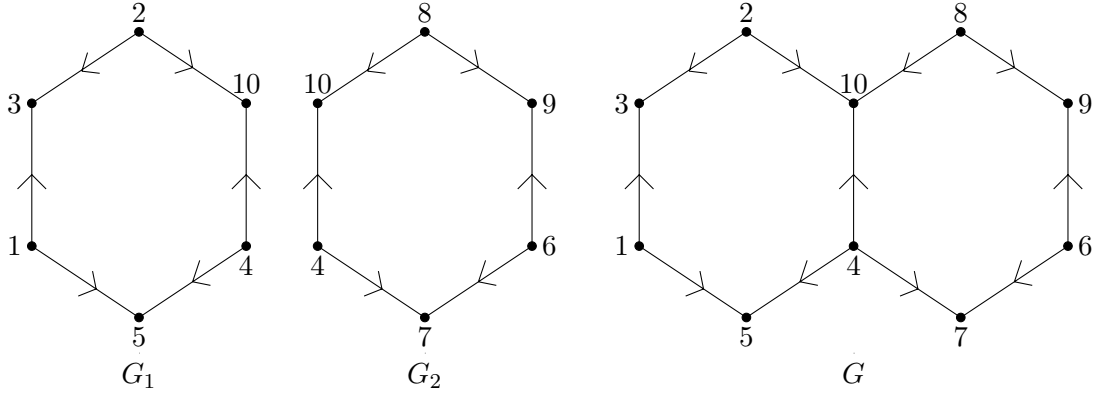
\begin{figure}
\begin{tikzpicture}[scale=0.95]
\filldraw[black]
(0,0) circle [radius=.06] node [left] {1}
(0,2) circle [radius=.06] node [left] {3}
(1.5,-1) circle [radius=.06] node [below] {5}
(1.5,3) circle [radius=.06] node [above] {2}
(3,0) circle [radius=.06] node [below] {4}
(3,2) circle [radius=.06] node [above] {10}
(1.5,-1.5) circle [radius=0] node [below] {$G_1$};
\draw
(0,0)--(0,2)
(0,0)--(1.5,-1)
(0,2)--(1.5,3)
(1.5,-1)--(3,0)
(3,0)--(3,2)
(1.5,3)--(3,2)

(-.2,.8)--(0,1)
(.2,.8)--(0,1)

(.95,2.42)--(0.7,2.45)
(.7,2.45)--(.78,2.71)

(2.25,2.5)--(2.2,2.75)
(2.25,2.5)--(2,2.42)

(.9,-.6)--(.65,-.7)
(.9,-.6)--(.85,-.35)

(2.25,-.5)--(2.35,-.2)
(2.25,-.5)--(2.5,-.55)

(2.8,.8)--(3,1)
(3.2,.8)--(3,1);

\filldraw[black]
(4,0) circle [radius=.06] node [below] {4}
(4,2) circle [radius=.06] node [above] {10}
(5.5,-1) circle [radius=.06] node [below] {7}
(5.5,3) circle [radius=.06] node [above] {8}
(7,0) circle [radius=.06] node [right] {6}
(7,2) circle [radius=.06] node [right] {9}
(5.5,-1.5) circle [radius=0] node [below] {$G_2$};
\draw
(4,0)--(4,2)
(4,0)--(5.5,-1)
(4,2)--(5.5,3)
(5.5,-1)--(7,0)
(7,0)--(7,2)
(5.5,3)--(7,2)

(3.8,.8)--(4,1)
(4.2,.8)--(4,1)

(4.95,2.42)--(4.7,2.45)
(4.7,2.45)--(4.78,2.71)

(6.25,2.5)--(6.2,2.75)
(6.25,2.5)--(6,2.42)

(4.9,-.6)--(4.65,-.7)
(4.9,-.6)--(4.85,-.35)

(6.25,-.5)--(6.35,-.2)
(6.25,-.5)--(6.5,-.55)

(6.8,.8)--(7,1)
(7.2,.8)--(7,1);

\filldraw[black]
(8.5,0) circle [radius=.06] node [left] {1}
(8.5,2) circle [radius=.06] node [left] {3}
(10,-1) circle [radius=.06] node [below] {5}
(10,3) circle [radius=.06] node [above] {2}
(11.5,0) circle [radius=.06] node [below] {4}
(11.5,2) circle [radius=.06] node [above] {10}
(13,-1) circle [radius=.06] node [below] {7}
(13,3) circle [radius=.06] node [above] {8}
(14.5,0) circle [radius=.06] node [right] {6}
(14.5,2) circle [radius=.06] node [right] {9}
(11.5,-1.5) circle [radius=0] node [below] {$G$};
\draw
(8.5,0)--(8.5,2)
(8.5,0)--(10,-1)
(8.5,2)--(10,3)
(10,-1)--(11.5,0)
(11.5,0)--(11.5,2)
(10,3)--(11.5,2)
(11.5,2)--(13,3)
(11.5,0)--(13,-1)
(13,-1)--(14.5,0)
(14.5,0)--(14.5,2)
(13,3)--(14.5,2)

(8.3,.8)--(8.5,1)
(8.7,.8)--(8.5,1)

(9.45,2.42)--(9.2,2.45)
(9.2,2.45)--(9.28,2.71)

(10.75,2.5)--(10.7,2.75)
(10.75,2.5)--(10.5,2.42)

(9.4,-.6)--(9.15,-.7)
(9.4,-.6)--(9.35,-.35)

(10.75,-.5)--(10.85,-.2)
(10.75,-.5)--(11,-.55)

(11.3,.8)--(11.5,1)
(11.7,.8)--(11.5,1)

(12.45,2.42)--(12.2,2.45)
(12.2,2.45)--(12.28,2.71)

(13.75,2.5)--(13.7,2.75)
(13.75,2.5)--(13.5,2.42)

(12.4,-.6)--(12.15,-.7)
(12.4,-.6)--(12.35,-.35)

(13.75,-.5)--(13.85,-.2)
(13.75,-.5)--(14,-.55)

(14.3,.8)--(14.5,1)
(14.7,.8)--(14.5,1);;
\end{tikzpicture}
\caption{Safe gluing of $G_1$ and $G_2$ at a $2$-clique \label{figure:non chordal gluing}}

\end{figure}

\begin{ex}\label{ex:non chordal safe gluing}
Let $G_1$ and $G_2$ be two non chordal cycles as shown in Figure \ref{figure:non chordal gluing}. Computing the vanishing ideals $I_{G_1}$ and $I_{G_2}$, we get 
\begin{eqnarray*}
I_{G_1}&=& \langle \sigma_{24},\sigma_{14},\sigma_{12},\sigma_{1,10},\sigma_{25},\sigma_{34},\sigma_{23}\sigma_{2,10}-\sigma_{22}\sigma_{3,10},\sigma_{13}\sigma_{15}-\sigma_{11}\sigma_{35},\sigma_{45}\sigma_{4,10}-\sigma_{44}\sigma_{5,10}
\rangle, \\
I_{G_2}&=& \langle \sigma_{6,10},\sigma_{78},\sigma_{68},\sigma_{49},\sigma_{48},\sigma_{46},\sigma_{89}\sigma_{8,10}-\sigma_{88}\sigma_{9,10},\sigma_{67}\sigma_{69}-\sigma_{66}\sigma_{79},\sigma_{47}\sigma_{4,10}-\sigma_{44}\sigma_{7,10} \rangle,
\end{eqnarray*}
which are both toric ideals. Now, if we perform a safe gluing of $G_1$ and $G_2$ at the $2$-clique $C=\{4,10\}$, we get the resultant DAG $G$ as in the figure. Observe that the indeterminate $\sigma_{4,10}$ appears in the vanishing ideal of both $G_1$ and $G_2$. Computing the vanishing ideal $I_G$ gives us
\begin{eqnarray*}
I_G &=& \langle \sigma_{14},\sigma_{12},\sigma_{6,10},\sigma_{68},\sigma_{49},\sigma_{48},\sigma_{29},\sigma_{46},\sigma_{28},\sigma_{27},\sigma_{26},\sigma_{25},\sigma_{24},\sigma_{78},\sigma_{59},\sigma_{58},\sigma_{39},\sigma_{56},\sigma_{38},\\
&& \sigma_{1,10},\sigma_{19},\sigma_{37},\sigma_{18},\sigma_{36},\sigma_{17},\sigma_{16},\sigma_{34},\sigma_{13}\sigma_{15}-\sigma_{11}\sigma_{35},\sigma_{89}\sigma_{8,10}-\sigma_{88}\sigma_{9,10},\\
&& \sigma_{67}\sigma_{69}-\sigma_{66}\sigma_{79}, \sigma_{4,10}\sigma_{57}-\sigma_{47}\sigma_{5,10},\sigma_{4,10}\sigma_{57}-\sigma_{45}\sigma_{7,10},\sigma_{47}\sigma_{4,10}-\sigma_{44}\sigma_{7,10},\\
&& \sigma_{45}\sigma_{4,10}-\sigma_{44}\sigma_{5,10},\sigma_{45}\sigma_{47}-\sigma_{44}\sigma_{57},\sigma_{23}\sigma_{2,10}-\sigma_{22}\sigma_{3,10}
\rangle \\
&& = I_{G_1} + I_{G_2} + \langle \sigma_{ij} : i \in V(G_1)\setminus C, j \in V(G_2) \setminus C \rangle = ST_G,
\end{eqnarray*}
which is still a toric ideal.
\end{ex}


\section{Gluing at sinks and adding a new sink} \label{sec:sinks}
We now look at two more ways of constructing DAGs which have 
toric vanishing ideals.  Both methods involve sinks in the DAGs.
The first construction we analyze is gluing the two graphs together at the sinks.
The second concept involves adding new sinks to the DAG.

\begin{defn}
Let $G_1$ and $G_2$ be two DAGs and $S_1, S_2$ be the set of sinks in $G_1$ and $G_2$ respectively. 
If $S$ is the set of all the common vertices in $S_1$ and $S_2$,
then \textit{gluing $G_1$ and $G_2$ at the sinks} 
refers to the construction of a new DAG $G$ with vertex set $V(G_1)\cup V(G_2)$ and edge set $E(G_1)\cup E(G_2)$. 
\end{defn}
We illustrate this construction of gluing at sinks with an example.

\begin{figure}
\begin{tikzpicture}[scale=0.6]
\filldraw[black]
(0,0) circle [radius=.06] node [left] {7}
(1.5,1.5) circle [radius=.06] node [above] {1}
(3,0) circle [radius=.06] node [below] {8}
(4.5,1.5) circle [radius=.06] node [above] {2}
(6,0) circle [radius=.06] node [below] {9}
(7.5,1.5) circle [radius=.06] node [above] {3}
(9,0) circle [radius=0.06] node [right] {10}
(4.5,-.5) circle [radius=0] node [below] {$G_1$};

\draw
(0,0)--(1.5,1.5)
(1.5,1.5)--(3,0)
(3,0)--(4.5,1.5)
(4.5,1.5)--(6,0)
(6,0)--(7.5,1.5)
(7.5,1.5)--(9,0)

(.8,1.15)--(.8,.8)
(.8,.8)--(1.1,.8)

(1.9,.7)--(2.25,.75)
(2.25,.75)--(2.23,1.1)

(3.8,1.15)--(3.8,.8)
(3.8,.8)--(4.1,.8)

(4.9,.7)--(5.25,.75)
(5.25,.75)--(5.23,1.1)

(6.8,1.15)--(6.8,.8)
(6.8,.8)--(7.1,.8)

(7.9,.7)--(8.25,.75)
(8.25,.75)--(8.23,1.1);

\filldraw[black]
(0,-2) circle [radius=.06] node [left] {7}
(3,-3.5) circle [radius=.06] node [above] {5}
(3,-2) circle [radius=.06] node [left] {8}
(6,-2) circle [radius=.06] node [right] {9}
(6,-3.5) circle [radius=.06] node [above] {6}
(9,-2) circle [radius=0.06] node [right] {10}
(4.5,-5) circle [radius=.06] node [below] {4}
(4.5,-6) circle [radius=0] node [below] {$G_2$};

\draw
(0,-2)--(3,-3.5)
(0,-2)--(4.5,-5)
(4.5,-5)--(9,-2)
(3,-3.5)--(6,-2)
(3,-2)--(6,-3.5)
(6,-3.5)--(9,-2)

(3.3,-4.2)--(3.4,-4.6)
(3.3,-4.2)--(3.7,-4.2)

(2,-3)--(2.1,-3.3)
(2,-3)--(2.3,-2.9)

(5,-3)--(5.1,-3.3)
(5,-3)--(5.3,-2.9)

(3.8,-2.9)--(4.15,-2.9)
(4.15,-2.9)--(4.1,-3.3)

(6.8,-2.9)--(7.15,-2.9)
(7.15,-2.9)--(7.1,-3.3)

(5.35,-4.2)--(5.65,-4.2)
(5.65,-4.2)--(5.6,-4.5);

\filldraw[black]
(12,0) circle [radius=.06] node [left] {7}
(13.5,1.5) circle [radius=.06] node [above] {1}
(15,0) circle [radius=.06] node [left] {8}
(16.5,1.5) circle [radius=.06] node [above] {2}
(18,0) circle [radius=.06] node [right] {9}
(19.5,1.5) circle [radius=.06] node [above] {3}
(21,0) circle [radius=0.06] node [right] {10};
\draw
(12,0)--(13.5,1.5)
(13.5,1.5)--(15,0)
(15,0)--(16.5,1.5)
(16.5,1.5)--(18,0)
(18,0)--(19.5,1.5)
(19.5,1.5)--(21,0);

\filldraw[black]
(15,-1.5) circle [radius=.06] node [above] {5}
(18,-1.5) circle [radius=.06] node [above] {6}
(16.5,-4) circle [radius=.06] node [below] {4}
(16.5,-6) circle [radius=0] node [below] {$G$};
\draw
(12,0)--(15,-1.5)
(12,0)--(16.5,-4)
(16.5,-4)--(21,0)
(15,-1.5)--(18,0)
(15,0)--(18,-1.5)
(18,-1.5)--(21,0)

(12.8,1.15)--(12.8,.8)
(12.8,.8)--(13.1,.8)

(13.9,.7)--(14.25,.75)
(14.25,.75)--(14.23,1.1)

(15.8,1.15)--(15.8,.8)
(15.8,.8)--(16.1,.8)

(16.9,.7)--(17.25,.75)
(17.25,.75)--(17.23,1.1)

(18.8,1.15)--(18.8,.8)
(18.8,.8)--(19.1,.8)

(19.9,.7)--(20.25,.75)
(20.25,.75)--(20.23,1.1)

(15.3,-2.95)--(15.4,-3.35)
(15.3,-2.95)--(15.7,-2.95)

(14,-1)--(14.1,-1.3)
(14,-1)--(14.3,-.9)

(17,-1)--(17.1,-1.3)
(17,-1)--(17.3,-.9)

(15.8,-.9)--(16.15,-.9)
(16.15,-.9)--(16.1,-1.3)

(18.8,-.9)--(19.15,-.9)
(19.15,-.9)--(19.1,-1.3)

(17.35,-3)--(17.65,-3)
(17.65,-3)--(17.6,-3.3);
\end{tikzpicture}
\caption{Gluing $G_1$ and $G_2$ at the sinks}\label{figure:gluing at sinks}
\end{figure}
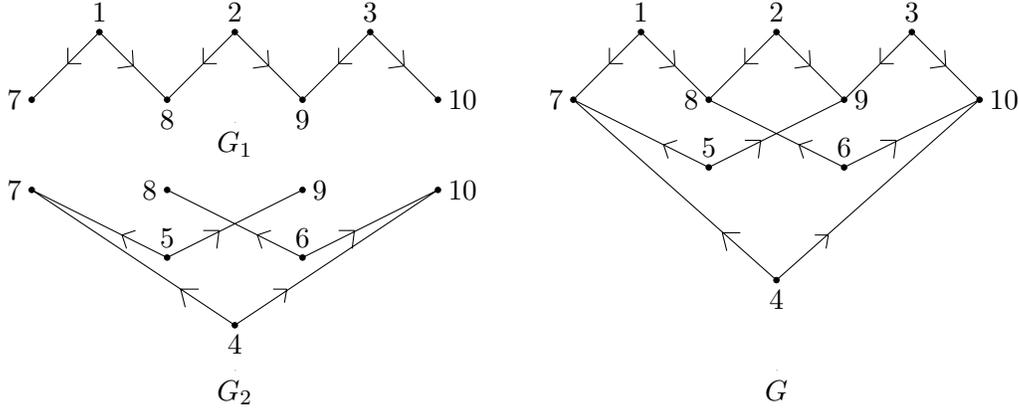

\begin{ex}\label{ex:gluing at sinks}
Let $G_1$ and $G_2$ be two DAGs as shown in Figure 
\ref{figure:gluing at sinks}. 
Here, the set of sinks in both $G_1$ and $G_2$ are $S_1=S_2=S=\{7,8,9,10\}$. 
We glue $G_1$ and $G_2$ at the sinks to form $G$. 
\end{ex}

\begin{thm}\label{thm: multiple sinks}
Let $G_1$ and $G_2$ be two DAGs.  Let $S$ be the set of common sinks in $G_1$ and $G_2$. 
Let $G$ be the DAG obtained after gluing $G_1$ and $G_2$ at the sinks. 
Suppose that for each pair of vertices $i,j \in S$,  either all treks
between $i$ and $j$ lie in $G_1$ or all treks between $i$ and $j$ lie in $G_2$. 
Then
\begin{eqnarray*}
I_G &=&  \, \, \, \,  \langle \text{ generators of }I_{G_1}\setminus \{\sigma_{ij} : i,j \in S \} \rangle  \\
&  &  + \langle \text{ generators of }I_{G_2}\setminus \{\sigma_{ij} : i,j \in S \} \rangle \\ 
&  &+\langle \sigma_{ij} : i \in V(G_1)\setminus S, j \in V(G_2)\setminus S \rangle  \\
&  &   + \langle \sigma_{ij} : i,j \in S \text{ such that there is no trek between } i \text{ and } j \rangle.
\end{eqnarray*}
\end{thm}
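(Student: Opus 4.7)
The plan is to establish the two inclusions separately.

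For the inclusion $\supseteq$, each type of generator on the RHS must vanish under $\phi_G$. For a mixed pair $\sigma_{ij}$ with $i \in V(G_1)\setminus S$ and $j \in V(G_2)\setminus S$, any hypothetical trek $P$ would force $\rmtop(P)$ to admit directed paths to both $i$ and $j$; since any directed path into a non-sink vertex $v \in V(G_k)\setminus S$ uses only edges of $G_k$ (each intermediate non-sink vertex has its outgoing edge in $G_k$), the two paths pin $\rmtop(P)$ into $V(G_1)\cap V(G_2)=S$, contradicting that sinks have no outgoing edges. For $\sigma_{ij}$ with $i,j\in S$ and no trek in $G$, vanishing is immediate. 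For a generator $f$ of $I_{G_1}$ not of the form $\sigma_{ij}$ with $i,j\in S$, I would choose a generating set of $I_{G_1}$ consisting of (a) the linear generators $\sigma_{ij}$ for all no-trek pairs in $G_1$, together with (b) non-linear generators reduced modulo those in (a). Then every indeterminate $\sigma_{kl}$ appearing in $f$ satisfies $\phi_G(\sigma_{kl})=\phi_{G_1}(\sigma_{kl})$: if at least one of $k,l$ lies in $V(G_1)\setminus S$ the treks of $G$ between $k,l$ are confined to $G_1$ by the preceding argument, and if $k,l\in S$ then $\phi_{G_1}(\sigma_{kl})\neq 0$ (since (b) has eliminated the others), so by the theorem's hypothesis all treks of $G$ between $k$ and $l$ lie in $G_1$. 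Hence $\phi_G(f)=\phi_{G_1}(f)=0$.

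For the reverse inclusion, I use a flatness/pushout argument on the structure of $\phi_G$. Partition the $\sigma$-indeterminates by the location of their $\phi_G$-image: $V_0$ (mapped to $0$), $V_{12}=\{\sigma_{ii}: i\in S\}$ (mapped into $\mathbb{C}[P_1\cap P_2]=\mathbb{C}[a_s:s\in S]$), $V_1$ (mapped into $\mathbb{C}[P_1]$ with a parameter from $P_1\setminus P_2$ occurring), and $V_2$ analogously. A case check using trek-confinement shows $V_1\cup V_{12}\subseteq \{\sigma_{ij}: i,j\in V(G_1)\}$ and that $\phi_G$ agrees with $\phi_{G_1}$ on this subring (and symmetrically for $G_2$). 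One then has the factorization
\[
\mathbb{C}[\sigma]\twoheadrightarrow \mathbb{C}[V_1\cup V_{12}]\otimes_{\mathbb{C}[V_{12}]}\mathbb{C}[V_2\cup V_{12}]\longrightarrow \mathbb{C}[P_1]\otimes_{\mathbb{C}[P_1\cap P_2]}\mathbb{C}[P_2]=\mathbb{C}[P],
\]
where the first surjection has kernel $\langle V_0\rangle$ and the second is the pushout of $\phi_G|_{\mathbb{C}[V_k\cup V_{12}]}$ over the isomorphism $\phi_G|_{\mathbb{C}[V_{12}]}\colon \sigma_{ii}\mapsto a_i$. Since $\mathbb{C}[V_2\cup V_{12}]$ is free over $\mathbb{C}[V_{12}]$, standard flat base change gives that the kernel of the second map is $K_1R+K_2R$ with $R$ the pushout ring and $K_k=\ker(\phi_G|_{\mathbb{C}[V_k\cup V_{12}]})=I_{G_k}\cap\mathbb{C}[V_k\cup V_{12}]$. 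With the generating set of $I_{G_k}$ chosen as above, $K_k$ is generated by the non-linear part, while $\langle V_0\rangle$ accounts for the mixed generators, the no-trek-in-$G$ generators within $S$, and the linear generators of $I_{G_k}$ with at least one index in $V(G_k)\setminus S$, so the pieces of $I_G$ match the RHS exactly.

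The main obstacle I anticipate is the bookkeeping around the $\sigma_{ij}$ with $i,j\in S$: these distribute across $V_0, V_1, V_2, V_{12}$ depending on where the treks of $G$ lie, and one must verify that the linear generators excluded by ``$\setminus \{\sigma_{ij}: i,j\in S\}$'' are precisely the no-trek-in-$G_k$ pairs within $S$ that either acquire a trek in the other graph (subsumed in $V_{3-k}$ and therefore not needed in the RHS) or have no trek in $G$ at all (reintroduced by the fourth summand). Once this accounting is set up, the flatness statement in the pushout is a routine exactness fact for polynomial rings.
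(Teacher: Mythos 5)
Your proposal is correct, and the overall strategy matches the paper's: decompose $\phi_G$ by observing that the images of the different blocks of $\sigma$-indeterminates land in essentially disjoint sets of parameters, and then argue that the kernel decomposes accordingly. The difference is one of rigor, not of route. Where the paper writes out the casewise form of $\phi_G$, partitions the indeterminates into four sets $A_1,\dots,A_4$, and simply asserts that ``there can be no nontrivial relations involving two or more of these sets of indeterminates,'' you replace that assertion with a genuine proof: you identify the middle ring as a pushout $\mathbb{C}[V_1\cup V_{12}]\otimes_{\mathbb{C}[V_{12}]}\mathbb{C}[V_2\cup V_{12}]$ and the target as $\mathbb{C}[P_1]\otimes_{\mathbb{C}[P_1\cap P_2]}\mathbb{C}[P_2]$, and compute the kernel of the induced map by two applications of flat (indeed free) base change. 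You also isolate the one point where ``disjoint'' fails literally --- the shared parameters $a_s$ for $s\in S$, visible only in $\phi_G(\sigma_{ss})$ --- by placing $\{\sigma_{ii}:i\in S\}$ into a separate block $V_{12}$ and tensoring over it; the paper's four-way partition quietly sweeps these under one of $A_1$ or $A_2$. Your forward inclusion, with the reduced generating set (linear no-trek generators plus non-linear generators reduced modulo them) and the observation that $\phi_G$ agrees with $\phi_{G_k}$ on every indeterminate surviving the reduction, is a clean way to make the paper's remark after the theorem precise. The one place you could tighten the write-up: the kernel computation for a tensor product of maps needs flatness on \emph{both} sides (of $\mathbb{C}[V_2\cup V_{12}]$ over $\mathbb{C}[V_{12}]$ for the first factor, and of $\mathbb{C}[P_1]$ over $\mathbb{C}[P_1\cap P_2]$ for the second); you only cite the first, although both hold here since all extensions are polynomial and hence free.
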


\begin{rmk} 
From the condition mentioned in the statement, we know that at 
least one of $\phi_{G_1}(\sigma_{ij})$ or $\phi_{G_2}(\sigma_{ij})$ 
is zero for all $i,j\in S, i\neq j$. 
So, ``$\langle \text{ generators of } I_{G_1}\setminus \{\sigma_{ij} : i,j \in S\} \rangle$", refers to forming a 
homogeneous generating set of $I_{G_1}$ that includes those indeterminates  in $\{\sigma_{ij}: i,j \in S\} $ 
which are mapped to zero under $\phi_{G_1}$ and then removing those indeterminates from the generating set.
Similarly, for 
``$\langle \text{ generators of } I_{G_2}\setminus \{\sigma_{ij} : i,j \in S\} \rangle$".
\end{rmk}

\begin{proof}
From the assumption that $S$ is a set of sinks of $G$, 
we know that there is no trek in $G$ between the vertices of $G_1\setminus S$ and $G_2\setminus S$. 
This implies that $\sigma_{ij}\in I_G$ for all $i\in V(G_1)\setminus S, j\in V(G_2)\setminus S$. 
Further, no two sinks $i,j$ in $S$ can have treks $i\rightleftharpoons j$ in both 
$G_1$ and $G_2$. So, the map $\phi_G$ can be written as 
\begin{eqnarray*}
\phi_G(\sigma_{ij})= \begin{cases} \phi_{G_1}(\sigma_{ij}) :  i \in V(G_1), j \in V(G_1)\setminus S \\
\phi_{G_2}(\sigma_{ij}) :  i \in V(G_2), j \in V(G_2)\setminus S  \\
\phi_{G_1}(\sigma_{ij})+\phi_{G_2}(\sigma_{ij}) : i,j \in S \\
\hspace{.4cm} 0 \hspace{.8cm}:  i \in V(G_1)\setminus S, j\in V(G_2)\setminus S .
\end{cases} 
\end{eqnarray*}
This allows us to partition the indeterminates $\sigma_{ij}$ into four sets $A_1, A_2, A_3, A_4$ where
\begin{eqnarray*}
A_1&=& \{\sigma_{ij} : i\in V(G_1), j\in V(G_1)\setminus S \text{ or } i,j\in S \text{ such that the treks } i\rightleftharpoons j \text{ lie in } G_1\} \\
A_2&=& \{\sigma_{ij} : i\in V(G_2), j\in V(G_2)\setminus S \text{ or } i,j\in S \text{ such that the treks } i\rightleftharpoons j \text{ lie in } G_2\} \\
A_3&=& \{\sigma_{ij} : i \in V(G_1)\setminus S, j \in V(G_2)\setminus S  \}  \\
A_4 & = &  \{\sigma_{ij} :  i,j \in S \text{ such that there is no trek between } i \text{ and } j \}.
\end{eqnarray*}
In these four sets, $\phi_G(\sigma_{ij})$ appear in disjoint sets of 
indeterminates and there can be no nontrivial relations involving two or more of 
these sets of indeterminates. So,
\begin{eqnarray*}
I_G = \ker \phi_G &=& \langle \text{ generators of }I_{G_1}\setminus \{\sigma_{ij} : i,j \in S \}
\rangle + \langle \text{ generators of }I_{G_2}\setminus \{\sigma_{ij} : i,j \in S \} 
\rangle \\ 
&&+\langle \sigma_{ij} : \sigma_{ij} \in A_3 \cup A_4 \rangle.  
\end{eqnarray*}
This completes the proof.
\end{proof}

\begin{ex}
Going back to Example \ref{ex:gluing at sinks}, we compute the vanishing ideals of the three DAGs $G_1$, $G_2$, and $G$. 
That gives us
\begin{eqnarray*}
I_{G_1}&=& \langle \sigma_{23},\sigma_{13},\sigma_{12},\sigma_{8,10},\sigma_{7,10},\sigma_{79},\sigma_{2,10},\sigma_{38},\sigma_{1,10},\sigma_{37},\sigma_{19},\sigma_{27},\sigma_{39}\sigma_{3,10}-\sigma_{33}\sigma_{9,10},\\
&&\sigma_{28}\sigma_{29}-\sigma_{22}\sigma_{89},\sigma_{17}\sigma_{18}-\sigma_{11}\sigma_{78} \rangle, \\
I_{G_2}&=& \langle \sigma_{9,10},\sigma_{89},\sigma_{78},\sigma_{69},\sigma_{5,10},\sigma_{67},\sigma_{49},\sigma_{58},\sigma_{48},\sigma_{56},\sigma_{46},\sigma_{45},\sigma_{68}\sigma_{6,10}-\sigma_{66}\sigma_{8,10},\\
&&\sigma_{47}\sigma_{4,10}-\sigma_{44}\sigma_{7,10},\sigma_{57}\sigma_{59}-\sigma_{55}\sigma_{79} \rangle, \\
I_{G}&=& \langle 
\sigma_{15},\sigma_{14},\sigma_{13},\sigma_{12},\sigma_{69},\sigma_{67},\sigma_{49},\sigma_{48},\sigma_{2,10},\sigma_{46},\sigma_{45},\sigma_{27},\sigma_{26},\sigma_{25},\sigma_{24},\sigma_{23},\sigma_{5,10},\\
&& \sigma_{58},\sigma_{56},\sigma_{38},\sigma_{1,10},\sigma_{19},\sigma_{37},\sigma_{36},\sigma_{35},\sigma_{16},\sigma_{34},
\sigma_{68}\sigma_{6,10}-\sigma_{66}\sigma_{8,10},\sigma_{28}\sigma_{29}-\sigma_{22}\sigma_{89},\\
&&\sigma_{47}\sigma_{4,10}-\sigma_{44}\sigma_{7,10},\sigma_{57}\sigma_{59}-\sigma_{55}\sigma_{79},\sigma_{39}\sigma_{3,10}-\sigma_{33}\sigma_{9,10},
\sigma_{17}\sigma_{18}-\sigma_{11}\sigma_{78}\rangle
\end{eqnarray*}

Observe that the indeterminates $\sigma_{27},\sigma_{79},\sigma_{37},\sigma_{7,10},\sigma_{12},
\sigma_{19},\sigma_{13},\sigma_{1,10},\sigma_{38},\sigma_{8,10},\sigma_{23},$ and  
$\sigma_{2,10}$ are mapped to zero by $\phi_{G_1}$ and the indeterminates
$ \sigma_{9,10},\sigma_{89},\sigma_{78},\sigma_{69},\sigma_{5,10},\sigma_{67},
\sigma_{49},\sigma_{58},\sigma_{48}, \sigma_{56}, 
\\
\sigma_{46}$ and  $\sigma_{45}$ 
are mapped to zero by $\phi_{G_2}$. 
Further, the treks $7\rightleftharpoons 8, 8\rightleftharpoons 9$ and $9\rightleftharpoons 10$ 
lie within $G_1$ whereas $7\rightleftharpoons 9, 7\rightleftharpoons 10$ and $8\rightleftharpoons 10$ 
lie within $G_2$. Also, no two sinks have treks between them in both $G_1$ and $G_2$.
Hence we are in a position where we can apply Theorem \ref{thm: multiple sinks}.

Analyzing the generating set of $I_G$, we see that the indeterminates 
$
\{\sigma_{13},\sigma_{12},\sigma_{2,10},\sigma_{27},\sigma_{23},\sigma_{38},$
\\
$\sigma_{1,10},\sigma_{19},\sigma_{37}\}$
and the binomials $\{\sigma_{28}\sigma_{29}-\sigma_{22}\sigma_{89},
\sigma_{39}\sigma_{3,10}-\sigma_{33}\sigma_{9,10},\sigma_{17}\sigma_{18}-\sigma_{11}\sigma_{78}\}$
in the generating set of $I_G$ are obtained from the generating set of $I_{G_1}$ 
after removing the indeterminates of the form $\{\sigma_{ij}: i,j\in S\}$.
Similarly, the indeterminates $\{\sigma_{69},\sigma_{67},\sigma_{49},\sigma_{48},\sigma_{46},
\sigma_{45},\sigma_{5,10},\sigma_{58},\sigma_{56}\}$ and the binomials $\{\sigma_{68}\sigma_{6,10}-\sigma_{66}\sigma_{8,10},
\sigma_{47}\sigma_{4,10}-\sigma_{44}\sigma_{7,10},\sigma_{57}\sigma_{59}-\sigma_{55}\sigma_{79}\}$
 are obtained from the generating set of $I_{G_2}$ after removing the indeterminates  $\{\sigma_{ij} : i,j\in S\}$.
The indeterminates $\{\sigma_{15},\sigma_{14},\sigma_{26},\sigma_{25},\sigma_{24},\sigma_{36},\sigma_{35},\sigma_{16},\sigma_{34}\}$ correspond to the third set of 
generators which are indeterminates of the form $\{\sigma_{ij}:i\in V(G_1)\setminus S, j\in V(G_2)\setminus S\}$.
In this example, there are no generators of the form 
\[
\langle \sigma_{ij} : i,j \in S \text{ such that there is no trek between } i \text{ and } j \rangle.
\]
\end{ex}

If we add the extra condition in Theorem \ref{thm: multiple sinks} 
that both $I_{G_1}$ and $I_{G_2}$ are toric, then we get the following result :

\begin{cor}\label{cor:gluing at sinks-toric}
Let $G_1$ and $G_2$ be two DAGs.  Let $S$ be the set of common sinks in $G_1$ and $G_2$. 
Let $G$ be the DAG obtained after gluing $G_1$ and $G_2$ at the sinks. 
Suppose that for each pair of vertices $i,j \in S$,  either all treks
between $i$ and $j$ lie in $G_1$ or all treks between $i$ and $j$ lie in $G_2$. 
\begin{enumerate}[i)]
    \item If $I_{G_1}$ and $I_{G_2}$ are toric, then $I_G$ is also toric.
    \item If $I_{G_1}=ST_{G_1}$ and $I_{G_2}=ST_{G_2}$, then $I_{G}$ is also equal to $ST_{G}$.
\end{enumerate}
\end{cor}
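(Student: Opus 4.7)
Both parts follow from Theorem \ref{thm: multiple sinks} together with an analogous decomposition of $ST_G$ obtained by running the same partition argument against $\psi_G$ in place of $\phi_G$. My plan is to treat part (i) as a direct consequence of the generating set provided by Theorem \ref{thm: multiple sinks}, and part (ii) by establishing the parallel formula for $ST_G$ and comparing the two expressions term by term.

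For part (i), Theorem \ref{thm: multiple sinks} exhibits $I_G$ as a sum of four ideals: the generators of $I_{G_1}$ with the indeterminates $\sigma_{ij}$ for $i,j\in S$ removed, the analogous piece coming from $I_{G_2}$, and two ideals generated by monomials $\sigma_{ij}$. Since $I_{G_1}$ and $I_{G_2}$ are toric, each admits a generating set of binomials (with monomial generators regarded as degenerate binomials), and deleting individual indeterminate generators from such a set preserves the binomial property. Hence $I_G$ has a generating set consisting entirely of binomials and monomials. Being the kernel of the ring homomorphism $\phi_G$ to a polynomial ring, $I_G$ is prime, so it is a prime binomial ideal, hence toric. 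Equivalently, one may construct an explicit monomial map with kernel $I_G$ by combining the monomial parametrizations of $I_{G_1}$ and $I_{G_2}$ on disjoint sets of parameters, sending $\sigma_{ij}$ for $i\in V(G_1)\setminus S$ and $j\in V(G_2)\setminus S$ to zero, and using the trek hypothesis on pairs in $S$ to choose consistently between the two parametrizations (for $i,j\in S$ with all treks in $G_k$, only $\hat\pi_k(\sigma_{ij})$ is nonzero, so the assignment is forced).

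For part (ii), I first verify that $\psi_G$ is well-defined. Because $S$ consists of sinks in $G$, any path between a vertex of $V(G_1)\setminus S$ and a vertex of $V(G_2)\setminus S$ must pass through a vertex of $S$, where it becomes a collider, so no such trek exists; similarly, any trek between two vertices of $V(G_k)$ must remain inside $G_k$, as leaving would again force a collider at a sink. The hypothesis of Theorem \ref{thm: multiple sinks} then guarantees that for $i,j\in S$ every trek lies entirely in a single $G_k$. Since $I_{G_k}=ST_{G_k}$ forces $\psi_{G_k}$ to be defined, the shortest trek in $G$ between any pair of vertices (when one exists) is uniquely inherited from the corresponding shortest trek in the appropriate $G_k$, so $\psi_G$ is well-defined. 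I then repeat the partition of indeterminates into the four sets $A_1,A_2,A_3,A_4$ from the proof of Theorem \ref{thm: multiple sinks} verbatim: $\psi_G(\sigma_{ij})$ is a monomial (possibly zero) supported on exactly the same parameters as $\phi_G(\sigma_{ij})$, so the same disjoint-support argument produces
\[
ST_G \,=\, \langle\text{gen of } ST_{G_1}\setminus\{\sigma_{ij}:i,j\in S\}\rangle + \langle\text{gen of } ST_{G_2}\setminus\{\sigma_{ij}:i,j\in S\}\rangle + \langle\sigma_{ij}: i\in V(G_1)\setminus S,\, j\in V(G_2)\setminus S\rangle + \langle\sigma_{ij}: i,j\in S,\, \text{no trek in } G\rangle.
\]
Substituting $ST_{G_k}=I_{G_k}$ on the right-hand side gives exactly the formula for $I_G$ from Theorem \ref{thm: multiple sinks}, and we conclude $I_G = ST_G$.

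The main obstacle is bookkeeping around the \emph{removal of indeterminates} notation: for part (i) one must argue that the decomposition of $I_G$ from Theorem \ref{thm: multiple sinks} really does inherit a binomial generating set rather than introducing trinomials after the removals, and for part (ii) one must confirm that the partition argument transfers to $\psi_G$ without modification. Both reduce to the observation that $\phi_G$ and $\psi_G$ have identical zero patterns and place the images of the four indeterminate classes in parameter-disjoint subrings, so no new relations are created and the formulas line up exactly.
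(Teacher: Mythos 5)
Your proposal is correct and follows essentially the same route as the paper: part (i) reads off a binomial/monomial generating set from Theorem \ref{thm: multiple sinks}, and part (ii) reruns the four-set partition argument with $\psi_G$ in place of $\phi_G$, which the paper summarizes by saying $\psi_G$ has the same structure as $\phi_G$. Your version is more detailed (e.g.\ explicitly checking that $\psi_G$ is well-defined and writing out the decomposition of $ST_G$), but the underlying reasoning and the key observation that the two maps share the same zero pattern and parameter-disjoint support are exactly those of the paper.
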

\begin{proof}
Part i) follows directly from Theorem \ref{thm: multiple sinks}, since the generating set will
be a union of a set of indeterminates and a collection of binomials.
For part ii), the shortest trek map $\psi_G$  has the same 
structure as $\phi_G$ as shown in the proof of Theorem \ref{thm: multiple sinks}. 
\end{proof}

We now look at a simple construction where instead of gluing two DAGs at the sinks, 
we add a new sink vertex to an existing DAG $G$. 
We show that the new DAG $G'$ has the same vanishing ideal as the existing one. 

\begin{thm}\label{thm:one sink}
Let $G$ be any arbitrary DAG. Construct a new DAG $G'$ from $G$, where we add another vertex $s$ 
and all edges $i \to s$ for $i \in V(G)$.
Then 
\[
I_{G'}   =  I_G \cdot \cc[\sigma_{ij} :  i,j \in V(G) \cup \{s\} ].
\]
\end{thm}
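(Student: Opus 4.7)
The plan is to prove both inclusions. The containment $I_G \cdot R \subseteq I_{G'}$, where $R := \cc[\sigma_{ij} : i,j \in V(G) \cup \{s\}]$, is immediate: since $s$ is a sink of $G'$, any simple trek between vertices $i,j \in V(G)$ that passed through $s$ would have $s$ as a collider (both incident edges on the path point into $s$), which is impossible. Hence $\phi_{G'}$ restricted to $R_0 := \cc[\sigma_{ij} : i,j \in V(G)]$ equals $\phi_G$, so every generator of $I_G$ lies in $\ker \phi_{G'} = I_{G'}$.

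For the reverse inclusion, I would first compute $\phi_{G'}$ on the new indeterminates $\sigma_{is}$ and $\sigma_{ss}$. Every simple trek in $G'$ from $i \in V(G)$ to $s$ must terminate in an edge $k \to s$ with $k \in V(G)$, and its prefix is a simple trek in $G$ from $i$ to $k$ (which cannot visit $s$). The vertex $k$ is a non-collider in the extended trek, since the incoming edge from the prefix is followed by the outgoing edge $k \to s$. Since $s$ has no outgoing edges, the only simple trek from $s$ to $s$ is the trivial one. Therefore
\[
\phi_{G'}(\sigma_{is}) = \sum_{k \in V(G)} \phi_G(\sigma_{ik})\,\lambda_{ks}, \qquad \phi_{G'}(\sigma_{ss}) = a_s.
\]

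Combined with the first paragraph, $\phi_{G'}$ descends to a homomorphism
\[
\overline{\phi} : R/(I_G \cdot R) \;=\; (R_0/I_G)[\sigma_{is},\sigma_{ss} : i \in V(G)] \longrightarrow \phi_{G'}(R),
\]
and it suffices to verify $\overline{\phi}$ is injective, for then $I_{G'} = \ker \phi_{G'} = I_G \cdot R$. The restriction of $\overline{\phi}$ to $R_0/I_G$ is the canonical isomorphism onto $\mathrm{im}(\phi_G)$, so injectivity reduces to showing that the elements $\overline{\phi}(\sigma_{is}) = \sum_k \phi_G(\sigma_{ik})\lambda_{ks}$ for $i \in V(G)$, together with $\overline{\phi}(\sigma_{ss}) = a_s$, are algebraically independent over $\mathrm{im}(\phi_G)$.

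The main obstacle is this algebraic independence claim. The parameter $a_s$ is clearly independent of everything else, as it does not appear elsewhere. The remaining elements form the coordinates of $\phi_G(\Sigma_G) \cdot \Lambda$, where $\Lambda = (\lambda_{ks})_{k \in V(G)}$ is a column of fresh indeterminates. Since the matrix $\phi_G(\Sigma_G)$ is generically invertible (its determinant is a nonzero polynomial, because the model produces positive-definite covariances for generic parameter values), it is invertible over $\mathrm{Frac}(\mathrm{im}(\phi_G))$. Hence the coordinates of $\phi_G(\Sigma_G)\Lambda$ are $\mathrm{Frac}(\mathrm{im}(\phi_G))$-linearly independent in $\mathrm{Frac}(\mathrm{im}(\phi_G))(\lambda_{ks})$, which yields algebraic independence over $\mathrm{im}(\phi_G)$ and completes the argument.
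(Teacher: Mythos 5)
Your proof is correct, but it takes a genuinely different route from the paper's. The paper establishes $I_G \cdot R \subseteq I_{G'}$ by the same observation that $\phi_{G'}$ restricted to $\cc[\sigma_{ij}: i,j \in V(G)]$ agrees with $\phi_G$, but then closes the gap with a dimension count: both $I_G \cdot R$ and $I_{G'}$ are prime, $\dim V(I_{G'}) = (n+1)+(e+n)$ because the Gaussian DAG model on $G'$ is identifiable, and $\dim V(I_G \cdot R) = (n+e)+(n+1)$ since the $n+1$ new indeterminates $\sigma_{is}$, $\sigma_{ss}$ are free over $R_0/I_G$; a containment of primes of equal dimension forces equality. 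You instead build the isomorphism explicitly: you compute $\phi_{G'}(\sigma_{is}) = \sum_k \phi_G(\sigma_{ik})\lambda_{ks}$ and $\phi_{G'}(\sigma_{ss}) = a_s$, factor $\phi_{G'}$ through $R/(I_G\cdot R) \cong (R_0/I_G)[\sigma_{is},\sigma_{ss}]$, and prove injectivity by showing the images of the new variables are algebraically independent over $\mathrm{im}(\phi_G)$, via the invertible $K$-linear change of variables $(\lambda_{ks}) \mapsto \phi_G(\Sigma_G)(\lambda_{ks})$ with $K = \mathrm{Frac}(\mathrm{im}(\phi_G))$, using generic invertibility of $\phi_G(\Sigma_G)$. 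Your argument is longer but more constructive, and it yields the stronger structural fact that $R/I_{G'}$ is a polynomial ring in $n+1$ variables over $R_0/I_G$; the paper's version is shorter but quietly leans on the identifiability of the model to fix $\dim V(I_{G'})$. Note that your algebraic-independence step is essentially re-deriving the identifiability of the new edge parameters $\lambda_{ks}$, so the two proofs are doing the same work at their core, just packaged differently.
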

\begin{proof}
Let $G$ have $n$ vertices and $e$ edges. 
From the construction, we know that $G'$ has $n+1$ vertices and 
$e+n$ edges. Since the new vertex $s$ is a sink,  
none of the treks between any two  vertices 
$i,j \in V(G') \setminus \{s\}$ can pass through $s$. 
Further, as $s$ is connected to every vertex of $G$, 
the image of $\sigma_{is}$ has a monomial of the form $a_i\lambda_{is}$ for all $i\in G$. 
Thus, the map $\phi_{G'}$ can be written as
\begin{eqnarray*}
\phi_{G'}(\sigma_{ij})= \begin{cases} \phi_{G}(\sigma_{ij}) :  i,j \in V(G)\\
a_i\lambda_{is}+\text{ other terms } :  i \in V(G), j=s   \\
\hspace{.4cm} a_s \hspace{.5cm} : i=j=s.
\end{cases} 
\end{eqnarray*}
Since $\phi_G(\sigma_{ij}) =  \phi_{G'}(\sigma_{ij})$ for all $i,j \in V(G') \setminus \{s\}$, 
it is clear that $I_G\subseteq I_{G'}$. 

In order to show that $I_{G'}   =  I_G \cdot \cc[\sigma_{ij} :  i,j \in V(G) \cup \{s\} ]$, 
we look at the dimension of the two ideals. 
We know that the dimension of $I_G$ is $n+e$, 
whereas the dimension of $I_{G'}\subseteq \mathbb{C}[\sigma_{ij}: i,j \in V(G')]$ is $(n+1)+(e+n)=2n+e+1$. 
The only new indeterminates present in $\mathbb{C}[\sigma_{ij}: i,j \in V(G')]$ 
are the indeterminates of the form $\sigma_{is}:i\in V(G')$. 
So, the dimension of $I_G$ in $\mathbb{C}[\sigma_{ij}: i,j \in V(G')]$ is $n+e+(n+1)$, 
which equals the dimension of $I_{G'}$. But as $I_G\subseteq I_{G'}$
and both ideals are prime, we can conclude that $I_G=I_{G'}$. 
\end{proof}

Again, if we add the extra condition that $I_G$ is toric in Theorem \ref{thm:one sink}, 
then we get the following result :

\begin{cor}\label{cor:one sink-toric}
Let $G$ be any arbitrary DAG. Construct a new DAG $G'$ from $G$, where we add another vertex $s$ 
and all edges $i \to s$ for $i \in V(G)$.
\begin{enumerate}[i)]
    \item If $I_{G}$ is toric, then $I_{G'}$ is also toric.
    \item If $I_G=ST_G$, then $I_{G'}$ is also equal to $ST_{G'}$ and hence is toric.
\end{enumerate}
\end{cor}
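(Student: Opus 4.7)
My plan is to deduce both parts from Theorem \ref{thm:one sink} together with a short analysis of the shortest trek map on $G'$. Throughout, let $R = \cc[\sigma_{ij} : i,j \in V(G)\cup\{s\}]$ and $R_0 = \cc[\sigma_{ij} : i,j \in V(G)]$.

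For part (i), I would invoke Theorem \ref{thm:one sink} to write $I_{G'} = I_G \cdot R$. If $I_G$ is toric, it is generated by binomials in $R_0$, and those same binomials generate $I_{G'}$ in $R$. Since $I_{G'}$ is the vanishing ideal of the Gaussian model on $G'$ it is prime, so $R/I_{G'} = (R_0/I_G)[\sigma_{is}: i \in V(G)\cup\{s\}]$ is a domain; in particular $I_{G'}$ is a prime binomial ideal over $\cc$, and such an ideal is the kernel of a monomial map, i.e.\ toric.

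For part (ii), I would first verify that $\psi_{G'}$ is well-defined under the hypothesis $I_G = ST_G$ (which already requires $\psi_G$ to be well-defined). Since $s$ is a sink, no trek in $G'$ passes through $s$, so for $i,j \in V(G)$ the treks in $G$ and $G'$ coincide and the unique-shortest-trek property transfers. For each pair $(i,s)$ with $i \in V(G)$, the edge $i\to s$ is a trek of length one with topmost vertex $i$, while every other $i\rightleftharpoons s$ has topmost $k\neq i$ and thus length at least two; so $i\to s$ is the unique shortest trek. Hence $\psi_{G'}$ is defined, agrees with $\psi_G$ on $R_0$, and sends $\sigma_{is}\mapsto a_i\lambda_{is}$ and $\sigma_{ss}\mapsto a_s$.

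Next I would show $ST_{G'} = ST_G \cdot R$. The key isolation observation is that each new parameter $\lambda_{is}$ (for $i\in V(G)$) appears in $\mathrm{image}(\psi_{G'})$ only through $\sigma_{is}$, and $a_s$ only through $\sigma_{ss}$. So for any binomial $\sigma^u - \sigma^v \in ST_{G'}$, equating the exponents of each new parameter on the two sides of $\psi_{G'}(\sigma^u) = \psi_{G'}(\sigma^v)$ forces $\sigma_{is}$ and $\sigma_{ss}$ to appear with equal multiplicity in $\sigma^u$ and $\sigma^v$. Factoring out the common monomial $M$ in the new variables gives $\sigma^u-\sigma^v = M(\sigma^{u'}-\sigma^{v'})$ with $\sigma^{u'},\sigma^{v'}\in R_0$ satisfying $\psi_G(\sigma^{u'}) = \psi_G(\sigma^{v'})$, so $\sigma^{u'}-\sigma^{v'}\in ST_G$. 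Combining with Theorem \ref{thm:one sink}, the hypothesis $I_G = ST_G$ yields
\[
I_{G'} = I_G \cdot R = ST_G \cdot R = ST_{G'},
\]
which proves (ii); the toric conclusion then follows from part (i) (or from the fact that $ST_{G'}$ is a kernel of a monomial map). I expect no real obstacle here; the only step that requires care is the factoring argument for $ST_{G'}$, which is straightforward once one notes the isolation of the new parameters in the image of $\psi_{G'}$.
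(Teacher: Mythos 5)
Your proposal is correct and follows essentially the same route as the paper: part (i) is deduced from the identity $I_{G'} = I_G \cdot R$ given by Theorem \ref{thm:one sink}, and part (ii) uses the observation that each new parameter $\lambda_{is}$ (resp.\ $a_s$) appears only in $\psi_{G'}(\sigma_{is})$ (resp.\ $\psi_{G'}(\sigma_{ss})$) to conclude $ST_{G'} = ST_G \cdot R$. Your write-up is actually a bit more careful than the paper's at two points --- you explicitly verify that $\psi_{G'}$ is well-defined (the unique shortest trek from $i$ to $s$ is the edge $i\to s$), and you spell out the exponent-matching/factoring argument rather than simply asserting that $\sigma_{is}$ cannot appear in any generator of $ST_{G'}$ --- but the underlying idea is identical.
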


\begin{proof}
For part i), since the two ideals have the same generating set, then they are both toric.

For part ii), using the same argument as in the Proof of Theorem \ref{thm:one sink}, the shortest trek map $\psi_{G'}$ can be written as 
\begin{eqnarray*}
\psi_{G'}(\sigma_{ij})= \begin{cases} \psi_{G}(\sigma_{ij}) :  i,j \in V(G)\\
\hspace{.2cm} a_i\lambda_{is} \hspace{.27cm}: i \in V(G), j=s   \\
\hspace{.4cm} a_s \hspace{.5cm} : i=j=s.
\end{cases} 
\end{eqnarray*}
So it is clear that $ST_G\subseteq ST_{G'}$. Now, the indeterminate 
$\lambda_{is}$ only appears in the image of $\sigma_{is}$ for all $i\in V(G)$. 
Similarly, the indeterminate $a_s$ only appears in the image of $\sigma_{ss}$. 
This implies that the indeterminates of the form $\sigma_{is}, i\in V(G')$ 
can not appear in any generators of $ST_{G'}$.  Thus  $ST_{G'} = ST_G \cdot \cc[\sigma_{ij} :  i,j \in V(G) \cup \{s\} ]$ as well, so $I_{G'} = ST_{G'}$.
\end{proof}

\begin{figure}
\begin{tikzpicture}[scale=1.2]
\filldraw[black]
(0,0) circle [radius=.04] node [below] {1}
(1.5,0) circle [radius=.04] node [below] {2}
(3.5,0) circle [radius=.04] node [below] {4}
(2.5,1) circle [radius=.04] node [above] {3}
(1.7,-.75) circle [radius=0] node [below] {$G$}; 
\draw
(0,0)--(3.5,0)
(1.5,0)--(2.5,1)
(2.5,1)--(3.5,0);

\draw
(.5,.15)--(.7,0)
(.5,-.15)--(.7,0)
(1.9,0.6)--(2.1,.6)
(2.1,.4)--(2.1,.6)
(2.3,.15)--(2.5,0)
(2.3,-.15)--(2.5,0)
(3,0.7)--(3,.5)
(3,.5)--(2.8,.5);

\filldraw[black]
(5,0) circle [radius=.04] node [below] {1}
(6.5,0) circle [radius=.04] node [above] {2}
(8.5,0) circle [radius=.04] node [below] {4}
(7.5,1) circle [radius=.04] node [above] {3}
(7,-1.5) circle [radius=.04] node [below] {5}
(6.5,-1.7) circle [radius=0] node [below] {$G'$}; 
\draw
(5,0)--(8.5,0)
(6.5,0)--(7.5,1)
(7.5,1)--(8.5,0)
(5,0)--(7,-1.5)
(6.5,0)--(7,-1.5)
(8.5,0)--(7,-1.5)
(7.5,1)--(7,-1.5);

\draw
(5.5,.15)--(5.7,0)
(5.5,-.15)--(5.7,0)
(6.9,0.6)--(7.1,.6)
(7.1,.4)--(7.1,.6)
(7.3,.15)--(7.5,0)
(7.3,-.15)--(7.5,0)
(8,0.7)--(8,.5)
(8,.5)--(7.8,.5)

(5.9,-.83)--(6.1,-.83)
(6.1,-.83)--(6.05,-.6)

(6.6,-.69)--(6.8,-.85)
(6.8,-.85)--(6.85,-.65)

(7.05,-.69)--(7.12,-.85)
(7.12,-.85)--(7.28,-.7)

(7.7,-.8)--(7.7,-.6)
(7.7,-.8)--(7.9,-.75);
\end{tikzpicture}
\caption{Introducing a new sink in $G$ to get $G'$}\label{figure:one sink}
\end{figure}
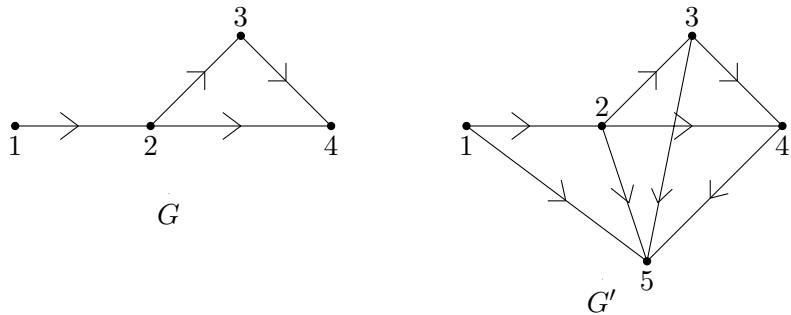

\begin{ex}
Let $G$ be a DAG with four vertices as shown in Figure \ref{figure:one sink}. 
From Example \ref{ex:3blocks}, we know that $I_G$ is a toric ideal. 
Now, we add another vertex $\{5\}$ to $G$ and connect all the existing vertices 
to $5$ by edges pointing towards $5$. Here $5$ is the sink in the new DAG $G'$. 
Computing the vanishing ideal of $G'$ gives us that $I_{G'}$ has the same generating set as $I_G$.
\end{ex}

To this point, 
we have described three ways to construct DAGs from smaller DAGs that preserve the toric
property:  safe gluing, gluing at sinks, and adding a new sink. 
We believe that these are the only possible operations that could be done to construct such DAGs. 
We know that the vanishing ideal of a complete DAG is zero and hence is toric. 
So starting with those examples as a base case, we can combine these three
operations to get many more examples of DAGs with toric vanishing ideals.
We explain this idea with an example.

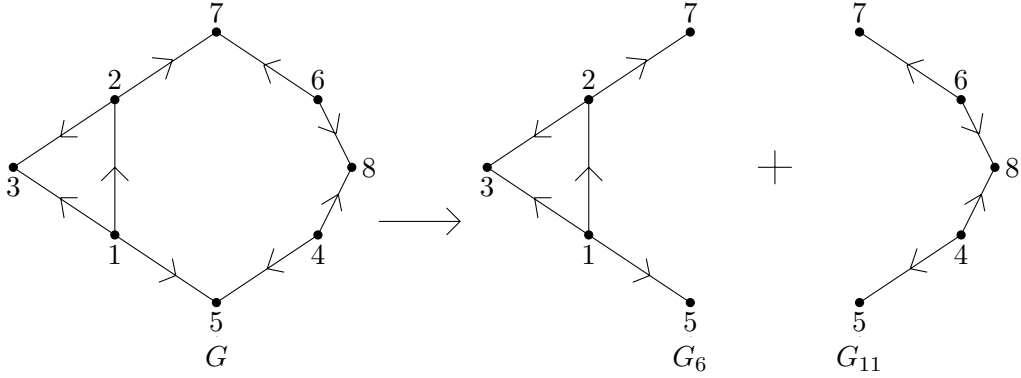
\begin{figure}
\begin{tikzpicture}[scale=0.9]
\filldraw[black]
(-1.5,1) circle [radius=.06] node [below] {3}
(0,0) circle [radius=.06] node [below] {1}
(0,2) circle [radius=.06] node [above] {2}
(1.5,-1) circle [radius=.06] node [below] {5}
(1.5,3) circle [radius=.06] node [above] {7}
(3,0) circle [radius=.06] node [below] {4}
(3,2) circle [radius=.06] node [above] {6}
(3.5,1) circle [radius=.06] node [right] {8}
(1.5,-1.5) circle [radius=0] node [below] {$G$};
\draw
(0,0)--(0,2)
(0,0)--(1.5,-1)
(-1.5,1)--(0,0)
(-1.5,1)--(0,2)
(0,2)--(1.5,3)
(1.5,-1)--(3,0)
(3,0)--(3.5,1)
(3,2)--(3.5,1)
(1.5,3)--(3,2)

(-.75,.25)--(-.8,.55)
(-.8,.55)--(-.55,.55)

(-.75,1.7)--(-.8,1.45)
(-.8,1.45)--(-.55,1.45)

(-.2,.8)--(0,1)
(.2,.8)--(0,1)

(.85,2.59)--(0.55,2.62)
(.85,2.59)--(0.75,2.3)

(2.45,2.59)--(2.2,2.51)
(2.3,2.24)--(2.2,2.51)

(.9,-.6)--(.65,-.7)
(.9,-.6)--(.85,-.35)

(2.25,-.5)--(2.35,-.2)
(2.25,-.5)--(2.5,-.55)

(3.05,.45)--(3.3,.6)
(3.3,.6)--(3.35,.35)

(3.32,1.8)--(3.27,1.5)
(3.27,1.5)--(3,1.6)

(3.9,0.2)--(5.1,0.2)
(5.1,0.2)--(4.9,0)
(5.1,0.2)--(4.9,0.4);

\filldraw[black]
(5.5,1) circle [radius=.06] node [below] {3}
(7,0) circle [radius=.06] node [below] {1}
(7,2) circle [radius=.06] node [above] {2}
(8.5,-1) circle [radius=.06] node [below] {5}
(8.5,3) circle [radius=.06] node [above] {7}
(8.5,-1.5) circle [radius=0] node [below] {$G_6$};

\draw
(7,0)--(7,2)
(7,0)--(8.5,-1)
(5.5,1)--(7,0)
(5.5,1)--(7,2)
(7,2)--(8.5,3)
(9.5,1)--(10,1)
(9.75,1.25)--(9.75,.75)

(6.25,.25)--(6.2,.55)
(6.2,.55)--(6.45,.55)

(6.25,1.7)--(6.2,1.45)
(6.2,1.45)--(6.45,1.45)

(7.85,2.59)--(7.55,2.62)
(7.85,2.59)--(7.75,2.3)

(7.9,-.6)--(7.65,-.7)
(7.9,-.6)--(7.85,-.35)

(6.8,.8)--(7,1)
(7.2,.8)--(7,1);

\filldraw[black]
(11,-1) circle [radius=.06] node [below] {5}
(11,3) circle [radius=.06] node [above] {7}
(12.5,0) circle [radius=.06] node [below] {4}
(12.5,2) circle [radius=.06] node [above] {6}
(13,1) circle [radius=.06] node [right] {8}
(11,-1.5) circle [radius=0] node [below] {$G_{11}$};

\draw
(11,-1)--(12.5,0)
(12.5,0)--(13,1)
(13,1)--(12.5,2)
(12.5,2)--(11,3)

(11.75,-.5)--(11.85,-.2)
(11.75,-.5)--(12,-.55)

(11.95,2.59)--(11.7,2.51)
(11.8,2.24)--(11.7,2.51)

(12.55,.45)--(12.8,.6)
(12.8,.6)--(12.85,.35)

(12.82,1.8)--(12.77,1.5)
(12.77,1.5)--(12.5,1.6);

\end{tikzpicture}
\caption{Constructing $G$ as a combination of safe gluing, gluing at sinks and adding a new sink to complete DAGs}\label{figure:combination}
\end{figure}

\begin{ex}
Let $G$ be the DAG as shown in Figure \ref{figure:combination}. Computing the vanishing ideal gives us that 
\begin{eqnarray*}
I_G &=& \langle
\sigma_{5,6}, \sigma_{4,7}, \sigma_{4,6}, \sigma_{3,8}, \sigma_{3,6}, \sigma_{3,4}, \sigma_{2,8}, \sigma_{2,6}, \sigma_{2,4}, \sigma_{1,8}, \sigma_{1,6}, \sigma_{1,4},
\sigma_{6,7}\sigma_{6,8}-\sigma_{6,6}\sigma_{7,8}, \\
&& \sigma_{4,5}\sigma_{4,8}-\sigma_{4,4}\sigma_{5,8}, \sigma_{2,5}\sigma_{3,7}-\sigma_{2,3}\sigma_{5,7}, 
\sigma_{1,7}\sigma_{3,5}-\sigma_{1,3}\sigma_{5,7}, \sigma_{2,5}\sigma_{2,7}-\sigma_{2,2}\sigma_{5,7}, \\
&& \sigma_{2,3}\sigma_{2,7}-\sigma_{2,2}\sigma_{3,7}, \sigma_{1,7}\sigma_{2,5}-\sigma_{1,2}\sigma_{5,7}, 
\sigma_{1,3}\sigma_{2,5}-\sigma_{1,2}\sigma_{3,5}, \sigma_{1,7}\sigma_{2,3}-\sigma_{1,2}\sigma_{3,7}, \\
&&\sigma_{1,7}\sigma_{2,2}-\sigma_{1,2}\sigma_{2,7}, \sigma_{1,5}\sigma_{1,7}-\sigma_{1,1}\sigma_{5,7}, 
\sigma_{1,3}\sigma_{1,5}-\sigma_{1,1}\sigma_{3,5}, \sigma_{1,2}\sigma_{1,5}-\sigma_{1,1}\sigma_{2,5}
\rangle.
\end{eqnarray*}
Now, we show that $G$ can be obtained as a combination of safe gluing, 
gluing at sinks, and adding a new sink starting from complete DAGs. 
Let $G_1$ be the DAG with vertices $\{1,2\}$. 
Then the vertex $3$ can be considered as adding a new sink to 
$G_1$ to form $G_2$. So, $G_2$ is the DAG with vertices $\{1,2,3\}$ 
and $I_{G_2}$ is toric. 

Let $G_3$ be the complete DAG with vertices $\{2,7\}$. Then we can make a safe gluing of 
$G_2$ with $G_3$ to get $G_4$ as $2$ is a choke point between $\{1,2,3\}$ and $\{2,7\}$. 
Similarly, if $G_5$ is the complete DAG with vertices $\{1,5\}$, then we can make another 
safe gluing of $G_4$ with $G_5$ to form $G_6$. Observe that $G_6$ has three sinks, 
which are $3,7$, and $5$. 

Let $G_7, G_8, G_9$ and $G_{10}$ be the complete DAGs with vertices 
$\{6,7\},\{6,8\},\{4,8\}$ and $\{4,5\}$ respectively. Then we can perform 
multiple safe gluings of these four DAGs to get $G_{11}$ with vertices 
$\{4,5,6,7,8\}$. It can be seen that $5$ and $7$ are the two sinks in 
$G_{11}$. So, finally we can glue $G_6$ and $G_{11}$ at the set of common sinks, 
i.e., $5$ and $7$. As there exist only one trek between $5$ and $7$ and that lies in 
$G_6$, we can conclude that the final DAG $G$ obtained after gluing $G_6$ and 
$G_{11}$ at the sinks must have a toric vanishing ideal.
\end{ex}


\section{The shortest trek ideal}\label{sec:shortesttrek}

The shortest trek ideal $ST_G$ appears to play an important role in the 
problem of classifying those DAGs whose vanishing ideal is toric.
For this reason, we focus on purely combinatorial properties of this
ideal in this section.  In particular, we prove our main result,
Theorem \ref{thm:ker(psi_G)=CI_G},  
that if  $ST_{G_1}$ equals $CI_{G_1}$ and 
$ST_{G_2}$ equals  $CI_{G_2}$, then $ST_{G}$ equals $CI_{G}$ where $G$ is a safe gluing of
$G_1$ and $G_2$.  This result provides further evidence for 
Conjecture \ref{conj:safe gluing conjecture}.

We begin with exploring the  structure of the shortest trek map. 

\begin{prop}\label{prop:dim of STG}
Let $G$ be a DAG such that the shortest trek ideal $ST_G$ exists.
Then the dimension of $ST_G$ is $n + e$, the number of vertices plus the number of edges.
\end{prop}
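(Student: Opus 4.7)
The plan is to compute the dimension of $ST_G = \ker \psi_G$ by analyzing the monomial parametrization $\psi_G \colon \mathbb{A}^{n+e} \to \mathbb{A}^{N}$, where the source is the parameter space with coordinates $(a_i, \lambda_{ij})$ (one $a_i$ per vertex, one $\lambda_{ij}$ per edge, so it has dimension $n+e$) and the target is the affine space with coordinates $\sigma_{ij}$. Since $\dim ST_G$ equals the Krull dimension of $\mathbb{C}[\sigma]/ST_G$, and since $V(ST_G)$ is the Zariski closure of the image of $\psi_G$, it suffices to show that this image has dimension $n+e$.

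By the fiber dimension theorem applied to the morphism $\psi_G$, I have
\[
\dim(\overline{\operatorname{image}\,\psi_G}) \;=\; (n+e) \;-\; \dim(\text{generic fiber}).
\]
So the main task reduces to proving that $\psi_G$ is generically finite, in fact generically injective on the parameter space. For this I will exhibit explicit formulas that recover all of the $a_i$ and $\lambda_{ij}$ from the image coordinates on a Zariski open subset.

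The key observation is that the shortest trek between $i$ and $j$ is especially well-behaved in two situations. First, for any vertex $i$, the unique trek from $i$ to itself is the trivial one with $\operatorname{top}=i$ and no edges, giving $\psi_G(\sigma_{ii}) = a_i$. Hence all the variance parameters are recovered directly. Second, for each edge $i \to j$ in $E(G)$, the edge itself constitutes a trek of exactly one edge between $i$ and $j$; any other trek between them uses at least two edges (its two directed legs cannot both be empty since $i \neq j$), so the edge $i \to j$ is strictly the unique shortest trek. Therefore $\psi_G(\sigma_{ij}) = a_i\lambda_{ij}$ for each $i \to j \in E(G)$, and on the open set where all $a_i \neq 0$ I can recover $\lambda_{ij} = \psi_G(\sigma_{ij})/\psi_G(\sigma_{ii})$.

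Thus on the dense open subset $\{a_i \neq 0 \text{ for all } i\} \subseteq \mathbb{A}^{n+e}$, the map $\psi_G$ is injective, so the generic fiber is $0$-dimensional and $\dim(\overline{\operatorname{image}\,\psi_G}) = n+e$. The main potential obstacle is checking the well-definedness condition, i.e.\ that for every edge $i \to j$ the shortest trek really is the edge itself with no competitor of equal length, but this follows immediately from the definition together with the hypothesis that $\psi_G$ exists (so shortest treks are unique). Combining everything gives $\dim ST_G = n+e$, completing the proof.
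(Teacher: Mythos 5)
Your proof is correct and rests on the same two key observations as the paper's proof: $\psi_G(\sigma_{ii})=a_i$ for each vertex and $\psi_G(\sigma_{ij})=a_i\lambda_{ij}$ for each edge $i\to j$. The paper states the lower bound by asserting that the collection $\{a_i : i\in V(G)\}\cup\{a_i\lambda_{ij}: i\to j\in E(G)\}$ is algebraically independent, while you deduce the same thing via the fiber dimension theorem and generic injectivity of $\psi_G$ on the open set where all $a_i\neq 0$; these are essentially two phrasings of the same argument.
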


\begin{proof}
The number of parameters in the ring $\cc[a,\lambda]$ is $n + e$, so $n + e$ is an upper bound
on the dimension.  On the other hand, for each $i$, $\psi_G(\sigma_{ii}) = a_i$ and
for edge edge $i \to j$,  $\psi_G(\sigma_{ij}) =  a_i  \lambda_{ij}$.
This collection of expressions 
\[
\{a_i : i \in V(G) \}  \cup  \{ a_i \lambda_{ij} :  i \to j \in E(G) \}
\]
is algebraically independent, and has cardinality $n+e$ which gives a lower bound for
the dimension of $ST_G$.  
\end{proof}

As $\psi_G$ is a monomial map, there is a corresponding matrix $M$, whose columns are the
exponent vectors in the monomials $\psi_G(\sigma_{ij})$.   
So $ST_G$ is the toric ideal of the matrix $M$ as
\[
\psi_G(\sigma^u)=t^{M u},
\]
where $\sigma=(\sigma_{11},\sigma_{12},\ldots,\sigma_{nn})$ and 
$t=(a_1,a_2,\ldots,a_n,\lambda_{12},\ldots, \lambda_{n-1n})$.
This matrix will be useful in proving some properties of the ideal $ST_G$.

To prove results about the generating sets of toric ideals, it
is useful to consider the notion of a fiber graph.
For any vector $b\in \mathbb{N}^{(n+e)}$, 
the \emph{fiber} of $M$ over $b$ is defined as 
\[
M^{-1}(b)=\{u\in \mathbb{N}^{(n^2+n)/2}:M u=b\}.
\]
As the columns of $M$ are non-zero and non-negative, 
$M^{-1}(b)$ is always finite for any $b\in \mathbb{N}^{(n+e)}$. 
Let $\mathcal{F}$ be any finite subset of $\ker_\zz(M)$. 
The \emph{fiber graph}  $M^{-1}(b)_\mathcal{F}$ is defined as follows:
\begin{enumerate}[i)]
\item The nodes of this graph are the elements of $M^{-1}(b)$. 
\item Two nodes $u$ and $u'$ are connected by an edge if $u-u'\in \mathcal{F}$
 or $u'-u\in \mathcal{F}$. 
\end{enumerate}

The fundamental theorem of Markov bases connects the generating 
sets of toric ideals to connectivity properties of the fiber graphs. 
We state this explicitly in the case of the fiber graphs for the shortest trek maps.

\begin{thm}\cite[Thm 5.3]{GB and CP}\label{thm:connected-generators}
Let $\mathcal{F}\subset \ker_\zz(M)$. 
The graphs $M^{-1}(b)_\mathcal{F}$ are connected for all $b$ such that $M^{-1}(b)$ is nonempty,
if and only if the set 
$\{\sigma^{v^+}-\sigma^{v^-}:v\in \mathcal{F}\}$ generates the toric ideal $ST_G$.
\end{thm}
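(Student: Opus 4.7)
The plan is to work with the natural $\mathbb{Z}^{n+e}$-grading in which $\sigma_{ij}$ has degree equal to its column in $M$. Both $ST_G$ and the binomial ideal $I_{\mathcal{F}} := \langle \sigma^{v^+} - \sigma^{v^-} : v \in \mathcal{F}\rangle$ are homogeneous for this grading, and the degree-$b$ piece $\cc[\sigma]_b$ has basis $\{\sigma^u : u \in M^{-1}(b)\}$. The containment $I_{\mathcal{F}} \subseteq ST_G$ is automatic, since every $v \in \mathcal{F} \subset \ker_\zz(M)$ satisfies $Mv^+ = Mv^-$ and hence $\psi_G(\sigma^{v^+} - \sigma^{v^-}) = 0$. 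Thus $I_{\mathcal{F}} = ST_G$ holds iff $(I_{\mathcal{F}})_b = (ST_G)_b$ for every $b$ with $M^{-1}(b)$ nonempty, and I would prove the biconditional by computing these two dimensions separately in each degree.

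For $(ST_G)_b$, the quotient $\cc[\sigma]/ST_G$ is isomorphic to the semigroup algebra generated by the columns of $M$, whose degree-$b$ piece is one-dimensional whenever $M^{-1}(b)$ is nonempty; hence $\dim (ST_G)_b = |M^{-1}(b)| - 1$. For $(I_\mathcal{F})_b$, $M$-homogeneity lets me expand any element as a $\cc$-linear combination of pure products $\sigma^w(\sigma^{v^+} - \sigma^{v^-})$ with $w + v^+, w + v^- \in M^{-1}(b)$; each such product is exactly $\sigma^u - \sigma^{u'}$ for an edge $(u,u')$ of the fiber graph $M^{-1}(b)_\mathcal{F}$. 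So $(I_\mathcal{F})_b$ is the $\cc$-span of the signed incidence vectors of the edges of $M^{-1}(b)_\mathcal{F}$, sitting inside $\cc^{M^{-1}(b)}$.

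The last ingredient is the standard linear-algebraic fact that the rank of the signed edge-vertex incidence matrix of a graph on $N$ vertices is $N - k$, where $k$ is the number of connected components. Applied here, $\dim (I_\mathcal{F})_b = |M^{-1}(b)| - k_b$, where $k_b$ is the number of connected components of the fiber graph. Comparing with the formula above, $(I_\mathcal{F})_b = (ST_G)_b$ holds exactly when $k_b = 1$, and quantifying over $b$ yields both directions of the theorem.

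The step I expect to be the main obstacle is the homogeneous decomposition used for $(I_\mathcal{F})_b$: showing that a general sum $\sum_i g_i(\sigma)(\sigma^{v_i^+} - \sigma^{v_i^-})$ which happens to land in degree $b$ can be rewritten as a $\cc$-combination of pure products $\sigma^w(\sigma^{v^+} - \sigma^{v^-})$ \emph{of degree exactly} $b$. Once one formally sets up the $M$-grading and verifies that $I_\mathcal{F}$ is homogeneous, this follows, but care is required to ensure that cancellations among terms of different degrees do not lose information. After that, everything reduces to routine linear algebra over $\cc$.
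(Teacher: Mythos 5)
The paper cites this result as \cite[Thm 5.3]{GB and CP} and does not include a proof of its own, so there is no in-paper argument to compare against. Your dimension-counting argument is correct and complete: $ST_G$ and $I_{\mathcal F}$ are both $M$-homogeneous, $\dim_{\cc}(ST_G)_b = |M^{-1}(b)| - 1$ because the degree-$b$ piece of $\cc[\sigma]$ maps onto the one-dimensional span of $t^b$, and $\dim_{\cc}(I_{\mathcal F})_b = |M^{-1}(b)| - k_b$ by the rank formula for the signed incidence matrix. The step you flagged as delicate (rewriting a general degree-$b$ element of $I_{\mathcal F}$ as a $\cc$-combination of pure products $\sigma^w(\sigma^{v^+}-\sigma^{v^-})$ in degree $b$) is indeed just the definition of a graded ideal generated by homogeneous elements, together with the observation that any monomial $\sigma^w$ multiplying $\sigma^{v^+}-\sigma^{v^-}$ into degree $b$ produces $\sigma^{w+v^+}-\sigma^{w+v^-}$ with $w+v^\pm\in M^{-1}(b)$, which is exactly an edge of the fiber graph (and conversely every edge $(u,u')$ with $u-u'=v\in\mathcal F$ arises this way with $w=u-v^+\ge 0$). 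Your route is more linear-algebraic than the argument typically given in Sturmfels, which proceeds combinatorially (disconnectedness produces a binomial $\sigma^u-\sigma^{u'}$ with $u,u'$ in separate components that cannot lie in $I_{\mathcal F}$; connectedness lets one telescope along a path to express any such binomial in $I_{\mathcal F}$, and then one invokes the fact that toric ideals are generated by binomials). The two approaches buy roughly the same thing; the telescoping version works over an arbitrary coefficient ring and makes the role of paths explicit, while your incidence-matrix version compresses both directions of the equivalence into a single rank computation, at the cost of requiring a field.
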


Now we relate the toric ideal $ST_G$ to some other familiar toric ideals
that are studied in the combinatorial algebra literature.  These results
will be useful for proving results on the generators of $ST_G$.
\begin{defn}
We define a map called the 
\textit{end point map $\eta_G$} as follows:
\begin{eqnarray*}
\eta_G:\mathbb{C}[\sigma_{ij} : 1\leq i\leq j\leq n] &\rightarrow& \mathbb{C}[d_1,\ldots,d_n] \\
\sigma_{ij} &\mapsto& 
\begin{cases}  
d_id_j  &  \mbox{ if there is a trek from $i$ to $j$} \\
0   &   \mbox{ otherwise}
\end{cases}
\end{eqnarray*}
As $\eta_G$ is also a monomial map, $\ker(\eta_G)$ is a toric ideal. 
\end{defn}

\begin{lemma}\label{lemma:end point map}
For any given DAG $G$ where the shortest trek map $\psi_G$ is well defined,
\[
ST_G\subseteq \ker (\eta_G).
\]
\end{lemma}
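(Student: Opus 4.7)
The plan is to exhibit $\eta_G$ as a composition $\theta \circ \psi_G$ for a suitable ring homomorphism $\theta$, so that the kernel containment $ST_G = \ker(\psi_G) \subseteq \ker(\eta_G)$ follows immediately from functoriality. Specifically, I would define
\[
\theta : \cc[a_i, \lambda_{ij} : i \in [n],\, i \to j \in E(G)] \longrightarrow \cc[d_1^{\pm 1}, \ldots, d_n^{\pm 1}]
\]
by $\theta(a_i) = d_i^2$ and $\theta(\lambda_{ij}) = d_j/d_i$. This is well-defined since the source is a polynomial ring and the images live in the Laurent ring.

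Next I would verify that $\theta \circ \psi_G = \eta_G$ on each generator $\sigma_{ij}$. If there is no trek between $i$ and $j$, both sides vanish and there is nothing to check. Otherwise let $P$ denote the shortest trek from $i$ to $j$ with topmost vertex $t = \rmtop(P)$, and decompose $P$ into directed paths $P_i$ from $t$ to $i$ and $P_j$ from $t$ to $j$. The key computation is the telescoping identity
\[
\theta\!\left(\prod_{k \to l \in P_i} \lambda_{kl}\right) \;=\; \prod_{k \to l \in P_i} \frac{d_l}{d_k} \;=\; \frac{d_i}{d_t},
\]
and analogously for $P_j$. Combining with $\theta(a_t) = d_t^2$ gives
\[
\theta(\psi_G(\sigma_{ij})) \;=\; d_t^2 \cdot \frac{d_i}{d_t} \cdot \frac{d_j}{d_t} \;=\; d_i d_j \;=\; \eta_G(\sigma_{ij}),
\]
so the two ring homomorphisms agree on the generators $\sigma_{ij}$ and hence everywhere.

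With the factorization $\eta_G = \theta \circ \psi_G$ in hand, if $f \in ST_G = \ker(\psi_G)$ then $\eta_G(f) = \theta(\psi_G(f)) = \theta(0) = 0$, so $f \in \ker(\eta_G)$, finishing the argument. The only minor subtlety is that $\theta$ lands in a Laurent polynomial ring while $\eta_G$ lands in the polynomial ring $\cc[d_1, \ldots, d_n]$, but this is harmless because the natural embedding $\cc[d] \hookrightarrow \cc[d^{\pm 1}]$ makes both compositions equal as maps into the Laurent ring, and $\ker(\eta_G)$ is unchanged under this embedding. I do not anticipate any real obstacle here: the whole proof reduces to writing down the auxiliary map $\theta$ and carrying out the one-line telescoping computation above.
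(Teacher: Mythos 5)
Your proof is correct and takes essentially the same approach as the paper: your auxiliary homomorphism $\theta$ with $\theta(a_i)=d_i^2$ and $\theta(\lambda_{ij})=d_j/d_i$ is exactly the map whose exponent matrix is the paper's matrix $M_1$ (columns $2e_i$ for $a_i$ and $-e_i+e_j$ for $\lambda_{ij}$), and your telescoping computation is the same as the paper's verification that $M_1 M u_{ij}=e_i+e_j=Nu_{ij}$. The only difference is presentational — ring-homomorphism language versus row-space containment of exponent matrices.
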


\begin{proof}
Let $M$ and $N$ be the matrices corresponding to the maps 
$\psi_G$ and $\eta_G$ respectively.
Note that we can ignore all pairs $i,j$ where there is no trek between $i$ and $j$,
as these $\sigma_{ij}$ map to zero under both the simple trek rule and the shortest trek 
map.
It is enough to show that the row space of $N$ is contained in the row space of $M$. 
We construct a matrix $M_1$ as follows:
\begin{enumerate}[i)]
    \item $M_1$ is an $n \times (n+|E|)$ matrix, 
    where the rows correspond to the vertices of $G$ (i.e, the indeterminates $d_i$) 
    and the columns correspond to the vertices and edges of $G$ (i.e, the indeterminates $a_i$ and $\lambda_{ij}$).
    \item For every vertex indeterminate $a_i$, the corresponding column is $2e_i$ and for every edge indeterminate $\lambda_{ij}$, the corresponding column is $-e_i+e_j$, where $e_i$ is the $i$th standard 
    unit vector.
\end{enumerate}
Now, let $\psi_G (\sigma_{ij})= a_k\lambda_{ki_1}\lambda_{i_1i_2}\cdots\lambda_{i_si}\lambda_{kj_1}\lambda_{j_1j_2}\cdots\lambda_{j_tj},$
where $k$ is the topmost vertex within the shortest trek $i\leftrightarrow j$. As $\psi_G(\sigma_{ij})=t^{Mu_{ij}}$ where $\sigma^{u_{ij}}=\sigma_{ij}$, we have 
\begin{eqnarray*}
M_1Mu_{ij}&=& 2e_k-e_k+e_{i_1}-e_{i_1}+e_{i_2}-\cdots-e_{i_s}+e_i-e_k+e_{j_1}-e_{j_1}+e_{j_2}-\cdots-e_{j_t}+e_j \\
&=&e_i+e_j \\
&=& Nu_{ij},
\end{eqnarray*}
for all $\sigma_{ij}, 1\leq i\leq j \leq n$. This implies that $N=M_1M$, which shows that $N$ is contained in the row space of $M$ and thus completes the proof.
\end{proof}

A consequence of Lemma \ref{lemma:end point map} is that the ideal $ST_G$ is homogeneous
with respect to the grading by indices.  So, if $\sigma^u - \sigma^v$ is in $ST_G$,
and all indeterminates involved correspond to actual treks, 
then, for each $i$, the index $i$ appears the same number of times in both $\sigma^u$ and
$\sigma^v$.  For example, it is not possible that $\sigma_{11}\sigma_{23} - \sigma_{13}\sigma_{24}$
is in any shortest trek ideal (unless some of these indeterminates correspond to pairs of vertices that
are not connected by treks).  

\begin{rmk}
Since the $\sigma_{ij}$ corresponding to pairs of vertices $i$ and $j$ with no trek
between them always appear as generators in the ideal $ST_G$, we need a way to ignore those
terms when speaking about binomials in $ST_G$.  Henceforth, when we speak of a binomial
$\sigma^u - \sigma^v$  in $ST_G$, we assume that all indeterminates appearing in this binomial
actually correspond to treks in $G$.
\end{rmk}

For a DAG $G$ if we want to show that $ST_G$ equals $CI_G$, 
it is enough to show that the set of $2\times 2$ minors of $\Sigma_{A\cup C,B\cup C}$ 
for all possible $d$-separations of $G$ form a generating set for $ST_G$. 
By using Theorem \ref{thm:connected-generators} this is equivalent to 
show that the graphs $M^{-1}(b)_\mathcal{F}$ is connected for all 
$b$, where $\mathcal{F}$ is the set of all $2\times 2$ minors of 
$\Sigma_{A\cup C,B\cup C}$  in the vector form, for all possible $d$-separations of $G$. 
Now, for a fixed $b$, let $u,v\in M^{-1}(b)_\mathcal{F}$. 
This implies that both $M u $ and $M v$ are equal to $b$,
which gives us $\psi_G(\sigma^u-\sigma^v)=0$. Therefore, 
it is enough to show that for any $f=\sigma^u-\sigma^v\in ST_G$, 
$\sigma^u$ and $\sigma^v$ are connected by the moves in $\cf$.

Now, for a DAG $G$ with $n$ vertices, let $u\in \mathbb{N}^{(n^2+n)/2}$ be a node in the graph of $M^{-1}(b)_\mathcal{F}$.  We in turn, represent this $u$, or equivalently the monomial $\sigma^u$, as a multi-digraph in the following way:
For each factor $\sigma_{ij}$ of $\sigma^u$ we draw all edges in the shortest trek
$i\leftrightarrow j$ along $G$ with highlighting the top vertices.  
For each $\sigma_{ii}$ we highlight that it is a top vertex. 
  
Let $\deg_i(\sigma^u)$ denote the \emph{degree} of a vertex $i$ in 
$\sigma^u$ which is defined to be the number of end points of paths in $\sigma^u$.
We count the loops corresponding to $\sigma_{ii}$ as having two endpoints at $i$. 
If $f=\sigma^u-\sigma^v$ is a homogeneous binomial in $ST_G$, then 
$\psi_G(\sigma^u)=\psi_G(\sigma^v)$ if and only if the following conditions are
satisfied:

\begin{enumerate}[i)]
\item The graphs of $\sigma^u$ and $\sigma^v$ both have the 
same number of treks (as $f$ is homogeneous), 
\item The graphs of $\sigma^u$ and $\sigma^v$ have the same number 
of edges between any two adjacent vertices $i$ and $j$ 
(as the exponent of $\lambda_{ij}$ in $\psi_G(\sigma^u)$ gives the number 
of edges between $i$ and $j$ in the graph of $\sigma^u$), 
\item  The multiset of top vertices in both graphs is the same.
\item   The degree of any vertex in both the graphs is the same 
(as $ST_G$ is contained in the kernel of $\eta_G$ by Lemma \ref{lemma:end point map} ).
\end{enumerate}

\begin{ex}
Let $G$ be the DAG as shown in Figure \ref{figure:ST_G connectivity}. From Example \ref{ex:shortest trek} (ii), we know that
\[
I_G=ST_G= CI_G= \langle \sigma_{12}\sigma_{23}-\sigma_{13}\sigma_{22},\sigma_{12}\sigma_{24}-\sigma_{14}\sigma_{22},\sigma_{13}\sigma_{24}-\sigma_{14}\sigma_{23} \rangle.  
\]
So, by Theorem \ref{thm:connected-generators}, we know that $\sigma^u$ and $\sigma^v$ are connected by the moves in $\mathcal{F}$ for any $\sigma^u-\sigma^v \in ST_G$, where $\mathcal{F}$ is the set of $2\times 2$ minors of $\Sigma_{A\cup C,B \cup C}$ in the vector form for all possible $d$-separations of $G$. Now, let 
\[
f=\sigma^u-\sigma^v=\sigma_{12}^2\sigma_{24}\sigma_{23}-\sigma_{22}^2\sigma_{13}\sigma_{14} \in ST_G.
\]
The multi-digraphs of $\sigma^u$ and $\sigma^v$ are as shown in Figure \ref{figure:fiber graphs of sigma}. Observe that the graphs of both $\sigma^u$ and $\sigma^v$ have four treks each. The number of edges $1\rightarrow 2$, $2\rightarrow 3$ and $2\rightarrow 4$ are 2,1 and 1 respectively in both the graphs. Further, the degree of each vertex $\{1\},\{2\},\{3\}$ and $\{4\}$ are also 2,4,1 and 1 respectively in both the graphs.

We can reach from $\sigma^u$ to $\sigma^v$ by first applying the move which takes $\sigma_{12}\sigma_{24}$ to $\sigma_{22}\sigma_{14}$ and then applying the move which takes $\sigma_{12}\sigma_{23}$ to $\sigma_{13}\sigma_{22}$.

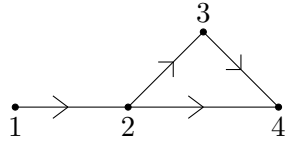
\begin{figure}
\begin{tikzpicture}
\filldraw[black]
(7.5,-1) circle [radius=.04] node [below] {1}
(9,-1) circle [radius=.04] node [below] {2}
(11,-1) circle [radius=.04] node [below] {4}
(10,0) circle [radius=.04] node [above] {3}; 
\draw
(7.5,-1)--++(3.5,0)
(9,-1)--++(1,1)
(10,0)--++(1,-1);

\draw
(8,-.85)--(8.2,-1)
(8,-1.15)--(8.2,-1)
(9.4,-.4)--(9.6,-.4)
(9.6,-.6)--(9.6,-.4)
(9.8,-.85)--(10,-1)
(9.8,-1.15)--(10,-1)
(10.5,-.3)--(10.5,-.5)
(10.3,-.5)--(10.5,-.5);

\end{tikzpicture}
\caption{A DAG $G$ where $I_G=ST_G$\label{figure:ST_G connectivity}}
\end{figure}

\begin{figure}
\begin{tikzpicture}
\filldraw[black]
(0.5,-1) circle [radius=.05] 
(0.5,-1.2) circle [radius=.05]
(1.9,-1.2) circle [radius=.02]
(0.5,-1.2) circle [radius=0] node [below] {1}
(1.9,-1) circle [radius=.02] 
(2.2,-1.1) circle [radius=.05]
(2.1,-.8) circle [radius=.05]
(2,-1.2) circle [radius=0] node [below] {2}
(4,-1.1) circle[radius=.02]
(4,-1.2) circle [radius=0] node [below] {4}
(3,0) circle [radius=.02] node [above] {3}

(2,-2) circle [radius=0] node[below] {$\sigma^u$}; 

\draw
(0.5,-1)--(1.9,-1)
(0.5,-1.2)--(1.9,-1.2)
(2.2,-1.1)--(4,-1.1)
(2.1,-.8)--(3,0);

\draw
(1,-1.2)--(0.9,-1.1)
(1,-1.2)--(0.9,-1.3)
(1.2,-1)--(1.1,-0.9)
(1.2,-1)--(1.1,-1.1)
(2.55,-0.4)--(2.4,-0.4)
(2.55,-0.4)--(2.55,-0.55)
(3.2,-1.1)--(3.1,-1)
(3.2,-1.1)--(3.1,-1.2);

\filldraw[black]
(7.5,-1) circle [radius=.05] 
(7.5,-1.2) circle [radius=.05] node [below] {1}
(8.8,-1.4) circle [radius=.05] 
(9.2,-1.4) circle [radius=.05]
(9.1,-.9) circle [radius=0] node [right] {2}
(11,-1.2) circle [radius=.02] node [below] {4}
(10,0) circle [radius=.02] node [above] {3}
(9,-2) circle [radius=0] node[below] {$\sigma^v$}; 

\draw
(7.5,-1)--(8.9,-1)
(7.5,-1.2)--(11,-1.2)
(8.9,-1)--(10,0);

\draw
(8,-1.2)--(7.9,-1.1)
(8,-1.2)--(7.9,-1.3)
(8.2,-1)--(8.1,-0.9)
(8.2,-1)--(8.1,-1.1)
(9.55,-0.4)--(9.4,-0.4)
(9.55,-0.4)--(9.55,-0.55)
(10.2,-1.2)--(10.1,-1.1)
(10.2,-1.2)--(10.1,-1.3)
(8.75,-1.78)--(8.65,-1.68)
(8.75,-1.78)--(8.65,-1.88)
(9.35,-1.78)--(9.25,-1.68)
(9.35,-1.78)--(9.25,-1.88);

\draw
(8.7,-1.58) circle [radius=.2]
(9.3,-1.58) circle [radius=.2];

\filldraw[white]
(8.665,-1.4) circle [radius=0.065]
(9.322,-1.4) circle [radius=0.065];

\filldraw[black]
(8.6,-1.4) circle [radius=.02]
(9.4,-1.4) circle [radius=.02];

\end{tikzpicture}
\caption{The multi-digraphs of $\sigma^u$ and $\sigma^v$\label{figure:fiber graphs of sigma}}
\end{figure}
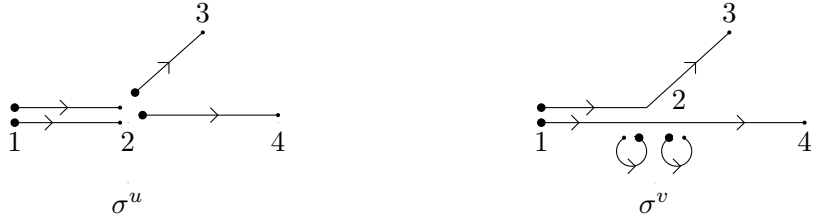
\end{ex}

\begin{lemma}\label{lemma:invalid move}
Let $G$ be a safe gluing of $G_1$ and $G_2$ such that $ST_{G_1}=CI_{G_1}$ and $ST_{G_2}=CI_{G_2}$. Then the set of all the $2\times 2$ minors of $\Sigma_{A\cup c,B\cup c}$ lie in $ST_G$, where $A=V(G_1)\setminus C$ and $B=V(G_2)\setminus C$.
\end{lemma}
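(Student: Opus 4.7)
The plan is to establish a multiplicative decomposition of $\psi_G$ across the choke point $c$ on index pairs in $(A\cup\{c\}) \times (B\cup\{c\})$, from which the vanishing of every $2\times 2$ minor of $\Sigma_{A\cup c, B\cup c}$ follows by direct cancellation. With the convention $\psi_G(\sigma_{cc}) = a_c$, the target identity is
\[
\psi_G(\sigma_{ij}) \cdot a_c = \psi_G(\sigma_{ic}) \cdot \psi_G(\sigma_{cj}) \quad \text{for all } i \in A\cup\{c\},\ j \in B\cup\{c\},
\]
interpreted with both sides equal to $0$ when a required trek is missing. Granting this identity, a routine cancellation then shows $\psi_G(\sigma_{r_1c_1}\sigma_{r_2c_2} - \sigma_{r_1c_2}\sigma_{r_2c_1}) = 0$ for every choice of $r_1,r_2 \in A\cup\{c\}$ and $c_1,c_2 \in B\cup\{c\}$.

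To prove the identity, assume without loss of generality that $c$ lies on the $A$-side of every trek from $A$ to $B$ (the other case is symmetric). Combined with the safe gluing requirement that no trek between $V(G_1)\setminus D$ and $V(G_2)\setminus D$ passes through $D$, every trek from $i\in A$ to $j\in B$ is forced through $c$ and splits at $c$ into a directed path in $G_1$ from $c$ to $i$ and a trek in $G_2$ from $c$ to $j$; moreover, any trek from $i \in A$ to $c$ is entirely contained in $G_1$ and any trek from $c$ to $j \in B$ is entirely contained in $G_2$ (an excursion into the opposite graph would, by truncation, yield a trek between $V(G_1) \setminus D$ and $V(G_2) \setminus D$ that either passes through $D$ or revisits $c$, a contradiction). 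Hence $\psi_G(\sigma_{ic}) = \psi_{G_1}(\sigma_{ic})$ and $\psi_G(\sigma_{cj}) = \psi_{G_2}(\sigma_{cj})$, and the identity reduces to showing that the shortest trek from $i \in A$ to $c$ in $G_1$ has top $c$---equivalently, is the unique shortest directed path $c \to \cdots \to i$ in $G_1$. Once this is verified, the canonical concatenation through $c$ is the shortest trek from $i$ to $j$ in $G$, and the identity follows by directly writing out the two sides.

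The main obstacle is this ``top equals $c$'' claim, and here the hypothesis $ST_{G_1} = CI_{G_1}$ is essential. Suppose for contradiction that the shortest trek from $i \in A$ to $c$ in $G_1$ has top $t' \neq c$. Uniqueness of shortest treks in $G_1$ forces the shortest treks from $i$ to $t'$ and from $c$ to $t'$ to also have top $t'$: any alternative top $t''$ would combine with the existing structure to produce a strictly shorter trek from $i$ to $c$, contradicting that the shortest trek from $i$ to $c$ has top $t'$. Consequently $\psi_{G_1}(\sigma_{ic})\psi_{G_1}(\sigma_{t't'}) = \psi_{G_1}(\sigma_{it'})\psi_{G_1}(\sigma_{ct'})$, so the $2\times 2$ minor $\sigma_{ic}\sigma_{t't'} - \sigma_{it'}\sigma_{ct'}$ lies in $ST_{G_1}$ and hence, by hypothesis, in $CI_{G_1}$. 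This forces $\{t'\}$ to $d$-separate $\{i\}$ from $\{c\}$ in $G_1$. However, the choke point assumption together with the existence of a trek from $i$ into $B$ forces $c$ to be an ancestor of $i$ in $G_1$, so there is a directed path $c \to \cdots \to i$ in $G_1$; since $t' \neq c$ is itself an ancestor of $c$ in $G_1$, placing $t'$ on this path would produce a directed cycle, so $t'$ is not on the path, leaving it unblocked by $\{t'\}$---a contradiction. The edge cases in which some required trek is absent are immediate, since a missing trek forces the corresponding side of the product identity to vanish, and the minors in question then lie in $ST_G$ trivially.
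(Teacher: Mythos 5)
Your proof takes a genuinely different and arguably cleaner route than the paper's. The paper argues directly about the four shortest treks appearing in a given $2\times 2$ minor, decomposing each through the choke point $c$, and then spends most of its effort on a case analysis (the two figures of ``invalid moves'') to rule out minors where some trek bypasses $c$. Your proof instead isolates a single multiplicative identity $\psi_G(\sigma_{ij})\, a_c = \psi_G(\sigma_{ic})\,\psi_G(\sigma_{cj})$ for all $i\in A\cup\{c\}$, $j\in B\cup\{c\}$, from which every $2\times 2$ minor of $\Sigma_{A\cup c, B\cup c}$ vanishes identically by cancellation---no case split at all. This identity is in fact implicitly used elsewhere in the paper (in the proof of Theorem \ref{thm:safe gluing at n clique}), so your proof is arguably more unified with the rest of the development.

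That said, two technical points deserve tightening. First, your argument that the shortest trek $i\leftrightarrow c$ has top $c$ is unnecessarily roundabout and carries a gap: you invoke $ST_{G_1}=CI_{G_1}$ and a $d$-separation contradiction, but the intermediate claim that the shortest treks $i\leftrightarrow t'$ and $c\leftrightarrow t'$ must also have top $t'$ is only sketched. In fact, the conclusion follows much more directly from the choke-point condition alone: under your WLOG, $c$ must lie on the $A$-side $P_i$ of the trek $i\leftrightarrow c$; since $c$ is also the endpoint (hence on $P_c$), simplicity forces $c$ to be on both legs, i.e., $c = \mathrm{top}$. Second, the edge-case dismissal (``a missing trek forces the corresponding side of the product identity to vanish'') is too quick---you need both sides to vanish. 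The correct observation is that existence of treks is consistent across the decomposition: if both $i\leftrightarrow c$ and $c\leftrightarrow j$ exist, their concatenation at $c$ (which is simple and collider-free by the same choke-point/safe-gluing arguments you give) yields a trek $i\leftrightarrow j$; conversely if $i\leftrightarrow j$ exists both halves do by truncation. With that lemma in hand your edge cases really are trivial, and you also get for free that the concatenation is the \emph{shortest} $i\leftrightarrow j$ trek (any shorter trek would cut at $c$ into shorter pieces), a point you asserted without justification.
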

\begin{proof}
Let $M$ be the set of all the $2\times 2 \text{ minors of } \Sigma_{A\cup c, B\cup c} $. These minors correspond to the separation criterion that $\{c\}$ $d$-separates $A$  from $B$. Every element in $M$ is of the form $\sigma_{ij}\sigma_{kl}-\sigma_{il}\sigma_{kj}$, where $i,l \in A\cup c$ and $j,k \in B\cup c$. Now, if all the four shortest treks $i \leftrightarrow j, k \leftrightarrow l, i \leftrightarrow l$ and $k \leftrightarrow j$ contain $c$, then each of these four treks can be decomposed as
\begin{eqnarray*}
i\leftrightarrow j &=& i\leftrightarrow c \cup c\leftrightarrow j, \\ 
k \leftrightarrow l &=& k \leftrightarrow c \cup c\leftrightarrow l, \\
i \leftrightarrow l &=& i\leftrightarrow c \cup c \leftrightarrow l , \\
k \leftrightarrow j &=& k\leftrightarrow c \cup c\leftrightarrow j. 
\end{eqnarray*}
From this decomposition, it is clear that $\sigma_{ij}\sigma_{kl}$ covers the same set of edges as $\sigma_{il}\sigma_{kj}$ and hence $\sigma_{ij}\sigma_{kl}-\sigma_{il}\sigma_{kj} \in ST_G$.

If one of these four shortest treks does not pass through $c$, then we cannot have a decomposition as above and hence cannot imply that the binomial lies in $ST_G$. Thus, we need to show that such a binomial does not appear in $M$.

Let  $f=\sigma_{ij}\sigma_{kl}-\sigma_{il}\sigma_{kj}$, where $i,l\in A \cup c, k,j\in B \cup c$ and the shortest trek $i \leftrightarrow l$ does not pass through $c$. Then the two monomials $\sigma_{ij}\sigma_{kl}$ and $\sigma_{il}\sigma_{kj}$ do not preserve the number of edges between adjacent vertices. To illustrate this, let us consider the vertex $c'$ which is adjacent to $c$ and lies in $i\leftrightarrow c$ (Fig \ref{figure:invalid move} (i)). (The shortest trek $i \leftrightarrow l$ here passes through the dashed line.) We observe that the multi-digraph of $\sigma_{ij}\sigma_{kl}$ contains the edge $c'\rightarrow c$ but the multi-digraph of $\sigma_{il}\sigma_{kj}$ does not contain $c'\rightarrow c$ as $i\leftrightarrow l$ does not pass through $c$. So, we need to show that $f\notin M$.

Now, all the possible options for DAGs which could fit in the above situation can be classified into two categories. This categorization is independent of the directions in $c\leftrightarrow k$ and $c\leftrightarrow j$ and is as follows :

\textbf{Case I :} The path between $i$ and $j$ containing $c$ has a collider at $c$ :

We illustrate this case in Fig \ref{figure:invalid move}, (i). Here, the shortest trek $i \leftrightarrow l$ is the trek which passes through the dashed line. Observe that $c$ can $d$-separate $i$ from $j$ and $k$ from $l$ 
but it cannot $d$-separate $i$ from $l$.
Similarly, any vertex which lies in $c_1\leftrightarrow c_2$ can 
$d$-separate $i$ from $l$ but they cannot $d$-separate $i$ from 
$j$ and $k$ from $l$ simultaneously. So, there does not exist any $2\times 2$ minor in $M$ where  $\sigma_{il}$ and $\sigma_{ij}$ or $\sigma_{kl}$ can occur together.

\textbf{Case II:} The path between $i$ and $j$ containing $c$ does not have a collider at $c$ :

In this case (Fig \ref{figure:invalid move}, (ii)), we see that $c$ alone cannot $d$-separate $i$ and $l$. 
So, we cannot have a binomial in $M$ with $\sigma_{il}$ as one of its terms.

Hence we can conclude that every element in $M$ lies in $ST_G$.
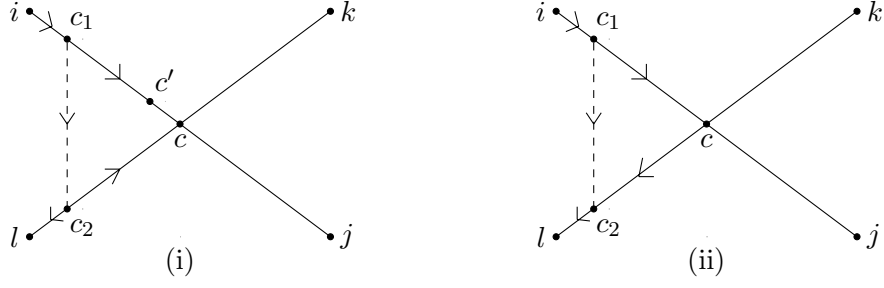
\begin{figure}
\begin{tikzpicture}
\filldraw[black]
(-2,0) circle [radius=0]
(0,0) circle [radius=.04] node[left] {$l$}    
(2,1.5) circle [radius=.04] node[below] {$c$}
(2,0) circle [radius=0] node[below] {(i)}
(4,0) circle [radius=.04] node[right] {$j$}
(0,3) circle [radius=.04] node[left] {$i$} 
(4,3) circle [radius=.04] node[right] {$k$}
(.5,.37) circle [radius=.04]  
(.7,.37) circle [radius=.0] 
node[below] {$c_2$}
(.5,2.63) circle [radius=.04] 
(.7,2.63) circle [radius=.0] node [above] {$c_1$}
(1.6,1.8) circle [radius=.04] 
(1.8,1.8) circle [radius=0] node [above] {$c'$};
\draw
(0,0)--(4,3)
(0,3)--(4,0)
(1.2,.9)--(1,.9)
(1.2,.9)--(1.1,.7)
(1.2,2.1)--(1.2,2.3)
(1.2,2.1)--(1,2.1)
(.5,1.5)--(.4,1.65)
(.5,1.5)--(.6,1.65)
(.3,2.8)--(.25,3)
(.3,2.8)--(.13,2.73)
(.28,.23)--(.28,.4)
(.28,.23)--(.45,.21);

\draw[dashed] (.5,.37)--(.5,2.63);

\filldraw[black]
(7,0) circle [radius=.04] node[left] {$l$}     
(9,1.5) circle [radius=.04] node[below] {$c$}
(9,0) circle [radius=0] node[below] {(ii)}
(11,0) circle [radius=.04] node[right] {$j$}
(7,3) circle [radius=.04] node[left] {$i$} 
(11,3) circle [radius=.04] node[right] {$k$}
(7.5,.37) circle [radius=.04] 
(7.7,.37) circle [radius=.0] 
node[below] {$c_2$}
(7.5,2.63) circle [radius=.04] 
(7.7,2.63) circle [radius=.0] node [above] {$c_1$}
;
\draw
(7,0)--(11,3)
(7,3)--(11,0)

(8.1,.8)--(8.3,.8)
(8.1,.8)--(8.15,1.05)
(8.2,2.1)--(8.2,2.3)
(8.2,2.1)--(8,2.1)
(7.5,1.5)--(7.4,1.65)
(7.5,1.5)--(7.6,1.65)

(7.3,2.8)--(7.25,3)
(7.3,2.8)--(7.13,2.73)

(7.28,.23)--(7.28,.4)
(7.28,.23)--(7.45,.21);
\draw[dashed] (7.5,.37)--(7.5,2.63);
\end{tikzpicture}
\caption{Two possible types of cases where an invalid move is possible \label{figure:invalid move}}
\end{figure}
\end{proof}

Suppose that $G$ can be written as a safe gluing of $G_1$ and $G_2$ at an $N$-clique. 
We define a map $\rho_{G_1}:  V(G) \rightarrow V(G_1)$ as follows:
\[ \rho_{G_1}(i) = \begin{cases} 
          i & i \in V(G_1) \\
              c & i \in V(G_2)\setminus C
       \end{cases}
    \]
where $C$ is the clique at which $G_1$ and $G_2$ are glued 
and $c$ is the special vertex in $C$.     
We can lift $\rho_{G_1}$  as a map between from 
$\cc[\Sigma]$ to itself by the rule 
$\rho_{G_1}(\sigma_{ij})=\sigma_{\rho_{G_1}(i)\rho_{G_1}(j)}$.

For a vector $u \in \nn^{n(n+1)/2}$,
let $u_{G_1}$ be the vector that extracts all the 
coordinates that correspond to the shortest treks that do not lie within $G_2$.  That is, 

\[
u_{G_1}(ij)= \begin{cases}
 0 & i,j \in G_2\setminus C \\
 u(ij) & \mbox{ otherwise. }

 \end{cases}
 \]
Then we have the following result.

\begin{prop} \label{prop:ST_G-c}
Let $G$ be a safe gluing of $G_1$ and $G_2$, with the map $\rho_{G_1}$ defined as above.
Suppose that $\sigma^u - \sigma^v \in ST_G$ 
and this binomial only involves $\sigma_{ij}$ indeterminates corresponding to treks.
Then
\[ 
\psi_{G_1}(\rho_{G_1}(\sigma^{u_{G_1}}))-\psi_{G_1}(\rho_{G_1}(\sigma^{v_{G_1}}))=0. 
\]
\end{prop}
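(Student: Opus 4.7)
The plan is to use a specialization argument. Define a ring homomorphism $\tau: \cc[a_k, \lambda_{kl} : k \in V(G), k \to l \in E(G)] \to \cc[a_k, \lambda_{kl} : k \in V(G_1), k \to l \in E(G_1)]$ that specializes every $G_2$-exclusive parameter to $1$: namely, $\tau(a_k)=1$ for $k \in V(G_2)\setminus C$ and $\tau(\lambda_{kl})=1$ for $k \to l \in E(G_2)\setminus E(G_1)$, while $\tau$ acts as the identity on all parameters of $G_1$ (including those of the shared clique $C = \{c\}\cup D$).

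Since $\sigma^u - \sigma^v \in ST_G = \ker \psi_G$, we have $\psi_G(\sigma^u) = \psi_G(\sigma^v)$, so applying $\tau$ preserves the equality. The key claim is that for any pair $(i,j)$ with $u_{G_1}(ij) \neq 0$ (i.e., not both in $V(G_2)\setminus C$), we have $\tau(\psi_G(\sigma_{ij})) = \psi_{G_1}(\rho_{G_1}(\sigma_{ij}))$. To prove this claim I split into cases using the safe gluing structure. If $i,j \in V(G_1)$, then any simple trek between them in $G$ must stay inside $G_1$: a detour through $V(G_2)\setminus C$ would force the path to cross the choke point $c$ twice, contradicting simplicity, so $\psi_G(\sigma_{ij}) = \psi_{G_1}(\sigma_{ij}) = \psi_{G_1}(\rho_{G_1}(\sigma_{ij}))$ and $\tau$ acts trivially. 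For a cross pair $i \in V(G_1)\setminus C$, $j \in V(G_2)\setminus C$ (and likewise for pairs with $i \in C$ and $j \in V(G_2)\setminus C$), the unique shortest $G$-trek passes through $c$ by the choke-point condition and avoids all of $D$ by safe gluing condition~(ii), so it decomposes at $c$ into a $G_1$-portion from $i$ to $c$ and a $G_2$-portion from $c$ to $j$.  Under $\tau$, the $G_2$-portion collapses to $1$, leaving precisely the monomial $\psi_{G_1}(\sigma_{\rho_{G_1}(i)\rho_{G_1}(j)})$.

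Given the claim, the identity $\tau(\psi_G(\sigma^u)) = \tau(\psi_G(\sigma^v))$ unpacks as $\psi_{G_1}(\rho_{G_1}(\sigma^{u_{G_1}})) \cdot T(u) = \psi_{G_1}(\rho_{G_1}(\sigma^{v_{G_1}})) \cdot T(v)$, where $T(u) = \prod_{i,j \in V(G_2)\setminus C} \tau(\psi_G(\sigma_{ij}))^{u_{ij}}$ is the contribution from the excluded pairs.  Because shortest treks between two vertices in $V(G_2)\setminus C$ live entirely in $G_2$, the only $G_1$-parameters that survive in $T(u)$ come from vertices and edges of the shared clique $C$.  Tracking these contributions through the multi-digraph interpretation of $\psi_G$ developed in Section~\ref{sec:shortesttrek}, and comparing the exponents of the $C$-only parameters that arise in $\psi_G(\sigma^u) = \psi_G(\sigma^v)$, shows $T(u) = T(v)$; cancelling then yields the desired identity $\psi_{G_1}(\rho_{G_1}(\sigma^{u_{G_1}})) = \psi_{G_1}(\rho_{G_1}(\sigma^{v_{G_1}}))$.

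The main difficulty lies in the cross-pair case: ensuring that the $G_1$-portion of the unique shortest $G$-trek from $i \in V(G_1)\setminus C$ to $j \in V(G_2)\setminus C$ is really the shortest $G_1$-trek from $i$ to $c$, so that $\tau(\psi_G(\sigma_{ij}))$ matches $\psi_{G_1}(\sigma_{ic})$ on the nose.  This identification leans on both the choke-point property of $c$ and the fact that $D$-vertex detours are forbidden by the safe gluing; making it rigorous, together with verifying the cancellation $T(u)=T(v)$, is where the proof will require the most care.
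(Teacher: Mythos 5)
Your specialization-homomorphism idea is a genuinely different route from the paper's argument (which works directly with the multi-digraph bookkeeping of Section~\ref{sec:shortesttrek}), and it is an attractive plan. However, the key claim on which the whole reduction rests is false, and the flaw is not the one you flag at the end.

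You assert that for every pair $(i,j)$ that is not both in $V(G_2)\setminus C$, one has $\tau(\psi_G(\sigma_{ij})) = \psi_{G_1}(\rho_{G_1}(\sigma_{ij}))$. Consider a cross pair $i \in V(G_1)\setminus C$, $j \in V(G_2)\setminus C$, assume $c$ is on the $I$-side of every trek (the other case is symmetric), and suppose the source of the shortest $G$-trek $i\leftrightarrow j$ lies in $V(G_2)\setminus C$; say the trek is $i \leftarrow \cdots \leftarrow c \leftarrow \cdots \leftarrow q \rightarrow \cdots \rightarrow j$ with $q = \rmtop \in V(G_2)\setminus C$. By Lemma~\ref{lemma: n-1 clique}(i) this configuration is perfectly possible for a safe gluing. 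Now $\psi_G(\sigma_{ij})$ carries the factor $a_q$, which $\tau$ kills, so $\tau(\psi_G(\sigma_{ij})) = \prod_{e \in G_1\text{-portion}} \lambda_e$ with no $a$-variable at all. On the other hand $\rho_{G_1}(\sigma_{ij}) = \sigma_{ic}$, and because $c$ is a choke point on the $I$-side, every trek from $i$ to $c$ is forced to have $c$ as its topmost vertex (otherwise $c$ would appear on both the $I$-side and the $J$-side of that trek, contradicting simplicity). Hence $\psi_{G_1}(\sigma_{ic}) = a_c \prod_{e} \lambda_e$. The two sides differ by exactly a factor of $a_c$, so your claimed equality fails. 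This is not a corner case: whenever the common source of the cross-treks lies in $V(G_2)\setminus C$ (one of the two cases of Lemma~\ref{lemma: n-1 clique}(i), and precisely the harder Case~II in the paper's own proof), every such indeterminate introduces a spurious missing $a_c$. Your step-2 factorization $\tau(\psi_G(\sigma^u)) = \psi_{G_1}(\rho_{G_1}(\sigma^{u_{G_1}}))\cdot T(u)$ therefore does not hold as written, and the subsequent cancellation $T(u)=T(v)$ is being asked to repair an identity that was already broken upstream.

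The difficulty you do flag — whether the $G_1$-portion of $i\leftrightarrow j$ is the shortest $G_1$-trek from $i$ to $c$ — in fact resolves cleanly: the choke-point condition forces every $G_1$-trek from $i$ to $c$ to be a directed path $c \to \cdots \to i$, and if a shorter such directed path existed, grafting it onto the $G_2$-portion would produce a shorter trek from $i$ to $j$, a contradiction. So the $\lambda$-factors on the two sides of your claim do match; it is the source variable $a_c$ that goes missing. One natural repair is to change the specialization to $\tau(a_q) = a_c$ for $q \in V(G_2)\setminus C$ (rather than $\tau(a_q)=1$), which restores the per-indeterminate identity for cross pairs, at the cost of making $T(u)$ a genuine product of $a_c$-powers and clique-edge $\lambda$-factors; proving $T(u)=T(v)$ then requires exactly the exponent-counting argument the paper carries out via the number of $\lambda_{dc}$ factors entering $c$. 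As it stands, your sketch omits the part of the argument that carries the real content.
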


Note that we use the notation $\psi_{G_1}$ to denote the shortest trek map associated to the graph $G_1$. However, the map
$\psi_G$ can also be used since that will give the same result. 

\begin{proof}
We have
\[ \rho_{G_1}(\sigma_{ij})=\begin{cases}
\sigma_{ij} & i,j \in V(G_1)\\
\sigma_{ic} & i \in V(G_1) \text{ and }  j\in V(G_2)\setminus C
\end{cases}
\]

We know that $\sigma^u$ and $\sigma^v$ have the same number of treks. 
Also, the degree of each vertex and the number of edges between 
any two adjacent vertices is the same. 
Moreover, the power of each $a_i$ (which corresponds to the source of every trek) is also the same.
So, it is enough to show that $\rho_{G_1}(\sigma^{u_{G_1}})$ and 
$\rho_{G_1}(\sigma^{v_{G_1}})$ have the same number of treks (which corresponds to the sum of all the powers of $a_i,i\in V(G_1)$ in the image) and the
number of edges between any two adjacent vertices (which we refer to as the degree of the edge) is also the same.

From the vector $u_{G_1}$ and the map $\rho_{G_1}$, we see that the treks in $\sigma^u$ of the form $i\leftrightarrow j$ are converted to $i\leftrightarrow c$, where $i\in V(G_1)$ and $j\in V(G_2)\setminus C$. As $i\leftrightarrow j$ and $i\leftrightarrow c$ have the same edges within $G_1$, they do not change the degree of any edge within $G_1$. So, the degree of each edge in $G_1$ is 
the same in both $\sigma^u$ and $\rho_{G_1}(\sigma^{u_{G_1}})$ and hence is the same in $\rho_{G_1}(\sigma^{v_{G_1}})$.

Now all we need to show is that the power of each $a_i$ is the 
same in both $\rho_{G_1}(\sigma^{u_{G_1}})$ and $\rho_{G_1}(\sigma^{v_{G_1}})$ for each $i\in G_1$. We observe that for every vertex $i\in V(G_1)\setminus \{c\}$, the number of treks in $\rho_{G_1}(\sigma^{u_{G_1}})$ with source $a_i$ remain the same as that in $\sigma^u$. The only change that can occur in $\rho_{G_1}(\sigma^{u_{G_1}})$ is the number of treks with source $c$. 
There are four types of treks in which $c$ can be the source: 
\begin{enumerate}[i)]
    \item treks of the form $c\leftrightarrow i$, where $i\in V(G_1)\setminus C$,
    \item treks of the form $c\leftrightarrow j$, where $j\in V(G_2)\setminus C$,
    \item treks of the form $i\leftrightarrow j$, where $i \in V(G_1)\setminus C$ and $j\in V(G_2)\setminus C$,
    \item $c\leftrightarrow c_i, c_i \in C$.
\end{enumerate}

\textbf{Case I} : The source of each trek of the form $i\leftrightarrow j$ with $i\in G_1$ and $j\in G_2$ lies in $G_1$ :

In this case, the treks of the form $(i)$ and $(iv)$ remain as it is whereas the treks of the form $(ii)$ and $(iii)$ are converted into $c\leftrightarrow c$ and $c\leftrightarrow i$ respectively, keeping the source of the treks as $c$. As all the sources lie within $G_1$, there are no treks of the form $i\leftrightarrow j, i\in G_1, j\in V(G_2)\setminus C$ with source in $G_2$ which could increase the power of $a_c$ in the image. Hence, the power of $a_c$ is preserved.

\textbf{Case II} : The source of each trek of the form $i\leftrightarrow j$ with $i\in G_1$ and $j\in G_2$ lies in $G_2$ :

In this case, the existing treks with source $c$ continue to contribute to the power of $a_c$ as in Case I. But,
there is a possibility of increasing the power of $a_c$ in $\rho_{G_1}(\sigma^{u_{G_1}})$ as the treks of the form $i\leftrightarrow j,i\in V(G_1),j\in V(G_2)\setminus C$ with source in $V(G_2)\setminus C$ are converted to $c\leftrightarrow i$ with source $c$. So, we need to show here that the increase in the power of $a_c$ remains the same in both $\rho_{G_1}(\sigma^{u_{G_1}})$ and $\rho_{G_1}(\sigma^{v_{G_1}})$.

We count the number of indeterminates of the form $\lambda_{dc}$ ( i.e, $d<c$ ) in the image of $\sigma^u$. This precisely gives us the number of the treks of the form $i\leftrightarrow j,i\in V(G_1),j\in V(G_2)\setminus C$ with source in $V(G_2)\setminus C$. This is because of the fact that if $\lambda_{ic}$ occurs in the image of $\sigma^u$ with $i\in V(G_1)\setminus C$, then it would imply that $\sigma^u$ has a trek which has an edge $i\rightarrow c,i\in V(G_1)$. This would mean the of treks of the form $i\leftrightarrow j, i\in V(G_1),j\in V(G_2)\setminus C$ cannot have source in $G_2$. As the number of indeterminates of the form $\lambda_{dc}$ is the same in both $\sigma^u$ and $\sigma^v$, we can conclude that the increase in the power of $a_c$ remains the same in $\rho_{G_1}(\sigma^{u_{G_1}})$ and $\rho_{G_1}(\sigma^{v_{G_1}})$.

So, $\psi_{G_1}(\rho_{G_1}(\sigma^{u_{G_1}}))-\psi_{G_1}(\rho_{G_1}(\sigma^{v_{G_1}}))=0$.
\end{proof}

\begin{defn}
Let $G$ be a safe gluing of $G_1$ and $G_2$ with $ST_{G_1}=CI_{G_1}$ and $ST_{G_2}=CI_{G_2}$. Then the \textit{lifting} of any binomial $f=\sigma_{i'j'}\sigma_{k'l'}-\sigma_{i'l'}\sigma_{k'j'} \in CI_{G_1}$ is defined as the set of binomials having the following form :
\[
\text{lift} (f)= 
\begin{cases} \sigma_{i'j'}\sigma_{k'l'}-\sigma_{i'l'}\sigma_{k'j'} & i',j',k',l' \in V(G_1)\setminus \{c\} \\
\sigma_{i'j}\sigma_{k'l'}-\sigma_{i'l'}\sigma_{k'j} & j'=c \text{ and for any } j\in V(G_2)\setminus D \text{ with }  i' \leftrightarrow c \subseteq i'\leftrightarrow j   \\
\sigma_{i'j'}\sigma_{k'l}-\sigma_{i'l}\sigma_{k'j'} & l'=c \text{ and for any } l\in V(G_2)\setminus D \text{ with } k' \leftrightarrow c \subseteq k'\leftrightarrow l  \\
\sigma_{i'p}\sigma_{ql'}-\sigma_{i'l'}\sigma_{pq} & j'=k'=c \text{ and for any } p, q \in V(G_2)\setminus D \text{ with } \\
\textcolor{white}{a} &  i' \leftrightarrow c \subseteq i'\leftrightarrow p,  c \leftrightarrow l' \subseteq q\leftrightarrow l' \text{ and } c \in p\leftrightarrow q
\end{cases}
\]
\end{defn}

We can similarly define the lift operation for binomials in $CI_{G_2}$. From the definition above, lift$(f)$ is not necessarily unique and can be lifted to multiple binomials. The lift operation can be seen as an inverse of the map $\rho_{G_1}$ (or $\rho_{G_2}$, although the $\rho_{G_i}$ maps are not invertible). In the next lemma, we show that the set of all binomials in lift($f$) lies in $CI_G$ and also in $ST_G$ for any $f= \sigma_{i'j'}\sigma_{k'l'}-\sigma_{i'l'}\sigma_{k'j'} \in CI_{G_1}$.

\begin{lemma}\label{lemma:lifted binomials in ST_G}
Let $f$ be any binomial in $CI_{G_1}$ of the form $\sigma_{i'j'}\sigma_{k'l'}-\sigma_{i'l'}\sigma_{k'j'} \in CI_{G_1}$. Then the set of all the binomials in lift$(f)$ lies in both $CI_G$ and $ST_G$.
\end{lemma}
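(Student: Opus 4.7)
The plan is to verify separately, for each of the four cases in the definition of $\text{lift}(f)$, that a lifted binomial $g$ lies in both $CI_G$ and $ST_G$. The unifying idea is that the choke point hypothesis forces every shortest trek joining a vertex in $V(G_1)\setminus D$ to a vertex in $V(G_2)\setminus D$ to decompose through $c$; this makes the lifted binomial the ``inverse image'' under $\rho_{G_1}$ of the original $f$, and the equality $\psi_{G_1}(\sigma^u)=\psi_{G_1}(\sigma^v)$ supplied by $f\in ST_{G_1}$ pushes forward to $\psi_G(\sigma^u)=\psi_G(\sigma^v)$.

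For membership in $ST_G$, I would treat the second case first as a model. Take $g=\sigma_{i'j}\sigma_{k'l'}-\sigma_{i'l'}\sigma_{k'j}$ with $j'=c$ and $j\in V(G_2)\setminus D$ such that $i'\leftrightarrow c$ is the initial segment of $i'\leftrightarrow j$. Because $c$ is a choke point, the shortest trek $k'\leftrightarrow j$ likewise decomposes as $k'\leftrightarrow c$ followed by the same segment $c\leftrightarrow j$. Hence
\[
\psi_G(\sigma_{i'j})\psi_G(\sigma_{k'l'})=\frac{\psi_G(\sigma_{i'c})\psi_G(\sigma_{cj})}{a_c}\psi_G(\sigma_{k'l'}),
\]
\[
\psi_G(\sigma_{i'l'})\psi_G(\sigma_{k'j})=\psi_G(\sigma_{i'l'})\frac{\psi_G(\sigma_{k'c})\psi_G(\sigma_{cj})}{a_c},
\]
and the two are equal since $\sigma_{i'c}\sigma_{k'l'}-\sigma_{i'l'}\sigma_{k'c}=f\in ST_{G_1}$ and $\psi_G$ agrees with $\psi_{G_1}$ on indeterminates indexed by $V(G_1)$. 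The third case is symmetric, the first case is immediate (no substitution occurs), and the fourth case decomposes four shortest treks through $c$ using the hypothesis $c\in p\leftrightarrow q$; writing each of $\psi_G(\sigma_{i'p})$, $\psi_G(\sigma_{ql'})$, $\psi_G(\sigma_{i'l'})$, and $\psi_G(\sigma_{pq})$ as the corresponding product through $c$ (divided by appropriate powers of $a_c$) reduces the equality to $f\in ST_{G_1}$ again.

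For membership in $CI_G$, I would identify the $d$-separation in $G_1$ from which $f$ arises and extend it to $G$. Since $f\in CI_{G_1}$ is the $2\times 2$ minor of $\Sigma_{\{i',k'\}\cup C'',\{j',l'\}\cup C''}$ for some separator $C''$ in $G_1$, and $f$ has the stated binomial form, the associated statement is that $\{c\}$ (or $C''=\emptyset$ together with $c$ appearing as one of the four vertices) $d$-separates the appropriate pair. The choke point property means that any path in $G$ between $V(G_1)\setminus D$ and $V(G_2)\setminus D$ that is not blocked at $c$ does not exist; thus in each lift case I can produce a $d$-separating set in $G$ that separates the new pair of sets, and the corresponding $2\times 2$ minor is exactly $g$. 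This gives $g\in CI_G$, and since $CI_G\subseteq I_G$ and, by Lemma~\ref{lemma:invalid move}, the relevant $2\times 2$ minors also lie in $ST_G$, membership holds in both.

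The main obstacle will be the fourth case of the lift, where $c$ is replaced simultaneously by distinct vertices $p,q\in V(G_2)\setminus D$ on opposite sides of the minor. Here the hypothesis $c\in p\leftrightarrow q$ is essential: without it the four shortest treks $i'\leftrightarrow p$, $q\leftrightarrow l'$, $i'\leftrightarrow l'$, $p\leftrightarrow q$ would not all route through $c$ in the correct way, and the $\psi_G$ calculation would fail (as illustrated by the kind of invalid moves ruled out in Lemma~\ref{lemma:invalid move}). Verifying the corresponding $d$-separation statement in $G$ also requires most care in this case; I would argue directly that every path in $G$ from $\{i',q\}$ to $\{p,l'\}$ meets $\{c\}$ as a non-collider, using the choke point definition to handle paths that enter and leave $D$.
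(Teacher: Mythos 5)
Your proposal follows essentially the same strategy as the paper: case-by-case over the four branches of $\text{lift}(f)$, using the choke-point decomposition of shortest treks through $c$ to get membership in $ST_G$, and extending the $d$-separation behind $f$ from $G_1$ to $G$ to get membership in $CI_G$. Your $ST_G$ argument is actually spelled out more explicitly than the paper's (the paper simply refers back to the decomposition of Lemma~\ref{lemma:invalid move}); it is correct, and it is precisely the lift conditions (e.g.\ $i'\leftrightarrow c\subseteq i'\leftrightarrow j$) that license writing $\psi_G(\sigma_{i'j})=\psi_G(\sigma_{i'c})\psi_G(\sigma_{cj})/a_c$.

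One imprecision in the $CI_G$ half: in cases (b) and (c), the separator witnessing $f\in CI_{G_1}$ is not $\{c\}$ but one of the other index vertices, e.g.\ $\{l'\}$ $d$-separates $\{i',k'\}$ from $\{c\}$ (or $\{i'\}$ $d$-separates $\{k'\}$ from $\{l',c\}$); the paper identifies this correctly and then uses the choke-point property to upgrade it to $\{l'\}$ separating $\{i',k'\}$ from all of $V(G_2)\setminus D$. Your phrasing ``the associated statement is that $\{c\}$ $\ldots$ $d$-separates the appropriate pair'' conflates the separator with the vertex being replaced by the lift, and the remark that ``any path in $G$ between $V(G_1)\setminus D$ and $V(G_2)\setminus D$ that is not blocked at $c$ does not exist'' overstates the choke-point condition, which constrains treks rather than all paths. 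Neither error is fatal --- after the correction you recover exactly the paper's case analysis --- but the $CI_G$ argument needs this patch to be complete, particularly in case (d), where the paper combines two separate facts, namely that $\{c\}$ $d$-separates $\{i'\}$ from $\{l'\}$ (from $G_1$) and $\{c\}$ $d$-separates $\{p\}$ from $\{q\}$ (from $G_2$), to deduce that $\{c\}$ $d$-separates $\{i',p\}$ from $\{l',q\}$.
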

\begin{proof}

\begin{enumerate}[i)]
\item 
We first show that lift($f$) $\in CI_G$ for all the four cases given in the definition of lift.

\begin{enumerate}[a)]
\item 
In the first case, as $CI_{G_1}\subseteq CI_G$, $\sigma_{i'j'}\sigma_{k'l'}-\sigma_{i'l'}\sigma_{k'j'} \in CI_{G}$ when $i',j',k',l' \in V(G_1)\setminus \{c\}$.

\item 
When $j'=c$ and $i',k',l'\in V(G_1)\setminus \{c\}$, then $f\in CI_{G_1}$ implies that $\{l'\}$ $d$-separates $\{i',k'\}$ from $\{c\}$ (or $\{i'\}$ $d$-separates $\{k'\}$ from $\{l',c\}$). Now, as every trek from $i'$ and $k'$ to any vertex in $V(G_2)\setminus C$ passes through $\{c\}$, we can conclude that $\{l'\}$ $d$-separates $\{i',k'\}$ from $V(G_2)\setminus D$. So, $\sigma_{i'j}\sigma_{k'l'}-\sigma_{i'l'}\sigma_{k'j} \in CI_{G}$ for any $j\in V(G_2)\setminus D$. (Similar argument follows when $\{i'\}$ $d$-separates $\{k'\}$ from $\{l',c\}$.)

\item 
A similar argument as in (b) follows here.

\item 
When $j'=k'=c$ and $c\in p\leftrightarrow q$, then we know that every trek from $i'$ to $q$ passes through $c$. Similarly, every trek from $l'$ to $p$ passes through $c$. Further, as $\sigma_{i'c}\sigma_{cl'}-\sigma_{i'l'}\sigma_{cc} \in CI_{G_1}$, we know that $\{c\}$ $d$-separates $\{i\}'$ from $\{l'\}$. From the definition of lift, we know that $c$ lies in $p\leftrightarrow q$. But as $CI_{G_2}=SP_{G_2}$, we can also say that $\{c\}$ $d$-separates $\{p\}$ from $\{q\}$. Combining all the separations, we have that $\{c\}$ $d$-separates $\{i',p\}$ from $\{l',q\}$ and hence $\sigma_{i'p}\sigma_{ql'}-\sigma_{i'l'}\sigma_{pq} \in CI_G$.
\end{enumerate}

\item 
In each case above, the $d$-separation criterion forces all the four shortest treks of each binomial to pass through a particular vertex. So, a decomposition similar to the one shown in the proof of Lemma \ref{lemma:invalid move} is always possible and hence lift($f$)$\in ST_G$ for all the four cases.
\end{enumerate}
\end{proof}

\begin{lemma}\label{lemma:match rho1}
Let $G$ be a safe gluing of $G_1$ and $G_2$, with the map $\rho_{G_1}$ defined as above.
Suppose that $\sigma^u - \sigma^v \in ST_G$ 
and this binomial only involves $\sigma_{ij}$ indeterminates corresponding to treks.
Suppose that $ST_{G_1} =  CI_{G_1}$.
Then, there is a set of quadratic moves in $CI_G$ that will transform
$\sigma^u$ into a monomial $\sigma^{u'}$ such that
$\rho_{G_1}(\sigma^{u'})  =  \rho_{G_1}(  \sigma^v)$.
\end{lemma}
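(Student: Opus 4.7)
The plan is to project the problem down to $G_1$ via the map $\rho_{G_1}$, solve it there using the hypothesis $ST_{G_1} = CI_{G_1}$, and then lift the resulting sequence of moves back up to $G$. Starting from $\sigma^u - \sigma^v \in ST_G$, Proposition \ref{prop:ST_G-c} immediately yields
$\rho_{G_1}(\sigma^{u_{G_1}}) - \rho_{G_1}(\sigma^{v_{G_1}}) \in ST_{G_1} = CI_{G_1}$.
Since $CI_{G_1}$ is generated by the quadratic $2 \times 2$-minor binomials coming from $d$-separations in $G_1$, Theorem \ref{thm:connected-generators} furnishes a sequence of quadratic moves $f_1, \dots, f_N \in CI_{G_1}$ that transforms $\rho_{G_1}(\sigma^{u_{G_1}})$ into $\rho_{G_1}(\sigma^{v_{G_1}})$.

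I would then lift each move $f_i$ to a quadratic binomial in $CI_G$ (using Lemma \ref{lemma:lifted binomials in ST_G}) and apply it to a current $G$-level monomial $\sigma^{u^{(i)}}$, starting from $\sigma^{u^{(0)}} = \sigma^u$. For every factor of $f_i$ whose index set contains $c$, I would choose an actual preimage in $\sigma^{u^{(i)}}$ under $\rho_{G_1}$ and then apply the matching case of the lift definition: case (a) when no index is $c$, cases (b) or (c) when one is, and case (d) when two are. The safe-gluing hypothesis makes the trek-containment conditions of the lift automatic, since every shortest trek between $V(G_1)\setminus C$ and $V(G_2)\setminus C$ passes through $c$ with its $G_1$-side equal to the corresponding shortest trek ending at $c$. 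By construction, $\rho_{G_1}$ of the updated $G_1$-part after this lift matches the result of applying $f_i$ at the $G_1$-level, so the induction is sustained and after $N$ steps we arrive at $\sigma^{u^{(N)}}$ with $\rho_{G_1}(\sigma^{u^{(N)}_{G_1}}) = \rho_{G_1}(\sigma^{v_{G_1}})$.

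The final step is to match the counts of purely-$G_2$ indeterminates, since
\[
\rho_{G_1}(\sigma^w) = \rho_{G_1}(\sigma^{w_{G_1}}) \cdot \sigma_{cc}^{k(w)},
\]
where $k(w)$ denotes the number of factors $\sigma_{pq}$ of $\sigma^w$ with $p,q \in V(G_2)\setminus C$. I would arrange $k(u^{(N)}) = k(v)$ by exploiting the freedom in the lift at cases (b)--(d), which allows any $\sigma_{cc}$-multiplicity change at the $G_1$-level to be absorbed into either the factor $\sigma_{cc}$ or into $k$ at the $G$-level, together with the quadratic conditional-independence moves $\sigma_{ic}\sigma_{cj} - \sigma_{ij}\sigma_{cc} \in CI_G$ coming from $\{c\}$ $d$-separating $\{i\}$ from $\{j\}$ for $i \in V(G_1)\setminus C$ and $j \in V(G_2)\setminus C$; each such move toggles $k$ by $\pm 1$ without altering $\rho_{G_1}$ of the $G_1$-part, and the total discrepancy $k(v) - k(u^{(N)})$ is bounded by the $\eta_G$-invariant endpoint count from Lemma \ref{lemma:end point map}. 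The main obstacle throughout will be the bookkeeping of preimages: at each lifted step one must verify that the chosen preimages and lift-case side conditions are compatible with the current $\sigma^{u^{(i)}}$, and this is where the safe-gluing structure together with the multiple preimage choices allowed by the lift definition are crucial to close the induction.
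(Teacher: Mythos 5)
Your proof follows essentially the same route as the paper's: project via $\rho_{G_1}$ (using Proposition \ref{prop:ST_G-c}), invoke $ST_{G_1}=CI_{G_1}$ and Theorem \ref{thm:connected-generators} to get a path of quadratic moves at the $G_1$ level, and lift those moves back to $CI_G$ via preimages under $\rho_{G_1}$. The paper's proof is terser and simply asserts that the lifted moves carry $\sigma^u$ to some $\sigma^{u'}$ with the same subgraph in $G_1$, whereas you explicitly flag the bookkeeping obstacle (compatibility of preimage choices with the current monomial); this is an honest gap shared by both arguments and not a defect of yours.

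One small correction: the final step in which you propose to adjust $k(u^{(N)})$ to match $k(v)$ is superfluous, and the move you suggest for it does not actually do what you claim. With $i\in V(G_1)\setminus C$ and $j\in V(G_2)\setminus C$, the binomial $\sigma_{ic}\sigma_{cj}-\sigma_{ij}\sigma_{cc}$ leaves $k$ unchanged, since $c\notin V(G_2)\setminus C$ and so none of $\sigma_{ic},\sigma_{cj},\sigma_{ij},\sigma_{cc}$ contributes to $k$; a move that genuinely toggles $k$ would need both off-$c$ indices in $V(G_2)\setminus C$. But no correction is needed: once $\rho_{G_1}(\sigma^{u^{(N)}_{G_1}})=\rho_{G_1}(\sigma^{v_{G_1}})$, equality of $k$ is automatic. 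Since $ST_G\subseteq\ker(\eta_G)$ forces $\sigma^u-\sigma^v$ to be homogeneous and every lifted move is degree-preserving, $\deg(\sigma^{u^{(N)}})=\deg(\sigma^v)$, while $\deg(\sigma^{u^{(N)}_{G_1}})=\deg(\rho_{G_1}(\sigma^{u^{(N)}_{G_1}}))=\deg(\rho_{G_1}(\sigma^{v_{G_1}}))=\deg(\sigma^{v_{G_1}})$; subtracting gives $k(u^{(N)})=k(v)$ directly.
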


\begin{proof}
Since $ST_{G_1}$ equals $CI_{G_1}$, 
by Theorem \ref{thm:connected-generators} we know that 
either $\rho_{G_1}(\sigma^{u_{G_1}})$ is equal to $\rho_{G_1}(\sigma^{v_{G_1}})$ or we can reach 
from $\rho_{G_1}(\sigma^{u_{G_1}})$ to $\rho_{G_1}(\sigma^{v_{G_1}})$ by making a finite set 
of moves from the set of $2\times 2$ minors of 
$\Sigma_{A\cup C,B\cup C}$, 
for all possible $d$-separations of $G_1$.

By using the map $\rho_{G_1}$ 
we lift each move each move $\sigma_{i'j'}\sigma_{k'l'}-\sigma_{i'l'}\sigma_{k'j'}$ 
in $G_1$ to a corresponding move $\sigma_{ij}\sigma_{kl}-\sigma_{il}\sigma_{kj}$ 
in $G$, where  
\[
\rho_{G_1}(\sigma_{ij})=\sigma_{i'j'}, \hspace{.2cm} \rho_{G_1}(\sigma_{kl})=\sigma_{k'l'}, \hspace{.2cm} \rho_{G_1}(\sigma_{il})=\sigma_{i'l'} \text{ and }\hspace{.1cm}  \rho_{G_1}(\sigma_{kj})=\sigma_{k'j'}.
\]
These moves take $\sigma^u$ to $\sigma^{u'}$ for some $u'$ such that $\sigma^{u'}$ and $\sigma^v$ have the same subgraph within $G_1$.
\end{proof}

We illustrate the technique used in the proof with an example.
\begin{ex}
Let $G=\{1\rightarrow 2, 1\rightarrow 4, 1\rightarrow 6, 1\rightarrow 8, 2\rightarrow 3, 4\rightarrow 5, 6\rightarrow 7, 8\rightarrow 9 \}$ be a DAG with $V(G_1)=\{1,2,3,6,7\}$ and $V(G_2)=\{1,4,5,8,9\}$. Let 
\[
f= \sigma^u-\sigma^v=\sigma_{56}\sigma_{47}\sigma_{67}\sigma_{28}-\sigma_{66}\sigma_{27}\sigma_{57}\sigma_{48}\in ST_G.
\]
Then $\rho_{G_1}(\sigma^{u_{G_1}})= \sigma_{16}\sigma_{17}\sigma_{67}\sigma_{12}$. We take
\[
m_1= \sigma_{16}\sigma_{67}-\sigma_{66} \sigma_{17}  \in CI_{G_1} 
\]
as the first move which takes $\rho_{G_1}(\sigma^{u_{G_1}})$ to $\sigma_{66}\sigma_{17}^2\sigma_{12}$. As
\[
\rho_{G_1}(\sigma_{56}\sigma_{67}-\sigma_{66} \sigma_{57})=\sigma_{16}\sigma_{67}-\sigma_{66} \sigma_{17},
\]
we lift $m_1$ to $m_1'=\sigma_{56}\sigma_{67}-\sigma_{66} \sigma_{57} \in CI_G$. Now, we take
\[
m_2= \sigma_{17}\sigma_{12}-\sigma_{27} \sigma_{11}  \in CI_{G_1} 
\]
as the second move which takes $\sigma_{66}\sigma_{17}^2\sigma_{12}$ to $\sigma_{66}\sigma_{17}\sigma_{27}\sigma_{11}$. Further, as
\[
\rho_{G_1}(\sigma_{47}\sigma_{28}-\sigma_{27} \sigma_{48})=\sigma_{17}\sigma_{12}-\sigma_{27} \sigma_{11},
\]
we lift $m_2$ to $m_2'=\sigma_{47}\sigma_{28}-\sigma_{27} \sigma_{48}$. Observe that applying $m_1'$ and then $m_2'$ on $\sigma^u$ takes $\sigma^u$ to $\sigma^v$.
\end{ex}

In a similar way, we can define the map $\rho_{G_2}$ 
and get a set of moves which would take $\rho_{G_2}(\sigma^{u'})$ to $\rho_{G_2}(\sigma^{v})$.
This in turn would give us a corresponding set of moves in $G$ which would take 
$\sigma^{u'}$ to $\sigma^{v'}$ for some $v'$ such that $\sigma^{v'}$ and $\sigma^v$ 
have the same subgraph within $G_2$. But before that, it is important to check that the second set of lifted moves obtained from $\rho_{G_2}$ does not affect the structure of $\sigma^{u'}$ within $G_1$.

\begin{prop} \label{prop:similarity preserved}
Let $m=\sigma_{ij}\sigma_{kl}-\sigma_{il}\sigma_{kj}$ be a move obtained as a lift of one of the moves in $CI_{G_2}$ which takes $\rho_{G_2}(\sigma^{u'})$ closer to $\rho_{G_2}(\sigma^v)$. Then $\rho_{G_1}(\sigma^{u'})=\rho_{G_1}(m(\sigma^{u'}))$. 
\end{prop}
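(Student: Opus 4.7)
The plan is to establish the equality $\rho_{G_1}(\sigma^{u'})=\rho_{G_1}(m(\sigma^{u'}))$ in the sense that both sides have the same image under $\psi_{G_1}$; equivalently, that the binomial $\rho_{G_1}(m)$ lies in $ST_{G_1}$. Since the move $m$ acts on $\sigma^{u'}$ by replacing the factor $\sigma_{ij}\sigma_{kl}$ with $\sigma_{il}\sigma_{kj}$, the claim reduces to showing
\[
\psi_{G_1}(\rho_{G_1}(\sigma_{ij}\sigma_{kl}))=\psi_{G_1}(\rho_{G_1}(\sigma_{il}\sigma_{kj})).
\]
Using the hypothesis $ST_{G_1}=CI_{G_1}$, I aim to verify this by checking that the two projected monomials determine the same $G_1$-multigraph: the same multiset of edges in $E(G_1)$, the same multiset of top vertices in $V(G_1)$, and the same vertex degrees.

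The approach is a case analysis on the four forms of the lift from $CI_{G_2}$. Under $\rho_{G_1}$, indices in $V(G_2)\setminus C$ collapse to $c$, indices in $D\cup\{c\}$ are fixed, and substituted vertices from $V(G_1)\setminus D$ are fixed. By Lemma \ref{lemma:lifted binomials in ST_G}, we already know that $m\in ST_G$, so the full $G$-multigraphs of the two monomials of $m$ coincide. The essential observation is that each shortest trek $a\leftrightarrow b$ in $G$ decomposes at the choke point $c$ into its $G_1$-portion and its $G_2$-portion; the $G_1$-portion equals the shortest trek $\rho_{G_1}(a)\leftrightarrow\rho_{G_1}(b)$ in $G_1$, by the uniqueness of shortest treks and the safe-gluing hypothesis. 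Therefore the $G_1$-multigraph of each monomial is determined by the multiset of projected endpoint pairs, and the full-graph agreement reduces to agreement after projection.

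Cases 1--3 of the lift should be direct. In Case 1 (no substitution), all of $i,j,k,l\in V(G_2)\setminus\{c\}$. When all four indices lie in $V(G_2)\setminus C$, both monomials project to $\sigma_{cc}^2$; any $D$-indices must appear symmetrically across the swap because the underlying $d$-separation in $G_2$ must still hold in $G$, which by safe gluing constrains how $D$-vertices can occur. In Cases 2 and 3 (single substitution by $j\in V(G_1)\setminus D$), the lift condition $i'\leftrightarrow c\subseteq i'\leftrightarrow j$ ensures that the $G_1$-portion attached to the substituted index is exactly the trek $c\leftrightarrow j$, and the same $G_1$-contribution appears in the opposite monomial, attached to the other $G_2$-endpoint.

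The main obstacle will be Case 4 (double substitution by $p,q\in V(G_1)\setminus D$), where the projected binomial takes the form $\sigma_{\bar{i'}p}\sigma_{q\bar{l'}}-\sigma_{\bar{i'}\bar{l'}}\sigma_{pq}$ with $\bar{i'},\bar{l'}\in D\cup\{c\}$. Verifying that this binomial lies in $ST_{G_1}$ requires invoking the choke-point behavior of $c$ inside $G_1$: the Case 4 condition $c\in p\leftrightarrow q$, combined with $\bar{i'},\bar{l'}\in D\cup\{c\}$ and the safe-gluing structure, forces $\{c\}$ to $d$-separate $\{\bar{i'},p\}$ from $\{\bar{l'},q\}$ in $G_1$. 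The corresponding $2\times 2$ minor of $\Sigma_{\{\bar{i'},p,c\},\{\bar{l'},q,c\}}$ is precisely the projected binomial, and the hypothesis $CI_{G_1}=ST_{G_1}$ then places it inside $ST_{G_1}$, which completes the argument.
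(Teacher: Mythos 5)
Your proposal changes the statement being proved. The proposition asserts $\rho_{G_1}(\sigma^{u'})=\rho_{G_1}(m(\sigma^{u'}))$ as an equality of monomials, and the paper's proof establishes exactly that: in each case it treats, the two monomials $\rho_{G_1}(\sigma_{ij}\sigma_{kl})$ and $\rho_{G_1}(\sigma_{il}\sigma_{kj})$ are shown to be literally identical (e.g.\ both equal to $\sigma_{ic}\sigma_{kc}$ when $i,k\in V(G_1)\setminus C$ and $j,l\in V(G_2)\setminus C$). You instead reduce the claim to ``both sides have the same image under $\psi_{G_1}$,'' i.e.\ that $\rho_{G_1}(m)\in ST_{G_1}$. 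This is strictly weaker, and the downstream argument cannot tolerate the weakening: in the proof of Theorem \ref{thm:ker(psi_G)=CI_G}, this proposition is used to guarantee that after applying the $G_2$-moves to $\sigma^{u'}$ one still has literal equality $\rho_{G_1}(\sigma^{v'})=\rho_{G_1}(\sigma^v)$, and Lemma \ref{lemma:final binomials} then explicitly cancels common factors using that literal equality (to conclude no $\sigma_{ij}$ with $i,j\in V(G_1)\setminus\{c\}$ survives in the monomial factors). If $\rho_{G_1}(\sigma^{v'})$ and $\rho_{G_1}(\sigma^v)$ merely agreed modulo $ST_{G_1}$, that cancellation argument would not go through.

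Your Case~4 analysis also reveals the tension concretely. When the original move in $CI_{G_2}$ is a $d$-separation by the choke point $c$ itself (your ``double substitution'' case), the projected binomial $\rho_{G_1}(m)$ is something like $\sigma_{cp}\sigma_{qc}-\sigma_{cc}\sigma_{pq}$, which is a \emph{nontrivial} element of $ST_{G_1}$ -- in particular the two monomials are not equal, so applying such a move genuinely changes $\rho_{G_1}(\sigma^{u'})$. You correctly notice this and handle it by appealing to $CI_{G_1}=ST_{G_1}$, but in doing so you confirm that literal equality fails for such moves. The paper avoids this by asserting, at the start of its proof, that ``the move $m$ corresponds to a $d$-separation by a vertex in $V(G_2)\setminus C$,'' i.e.\ a vertex $c'\neq c$, which rules out your Case~4 entirely and makes the remaining cases collapse to literal equality. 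So the two routes are genuinely different: the paper restricts the class of moves and gets the strong conclusion; you treat all lift cases and get only the weak conclusion. To repair your argument you would need either to prove the paper's restriction (that moves $d$-separating by $c$ itself, and hence Case~4 lifts, are never needed when $\rho_{G_1}(\sigma^{u'})$ already matches $\rho_{G_1}(\sigma^v)$), or to reformulate Lemma \ref{lemma:final binomials} so that it works with agreement modulo $ST_{G_1}$ rather than literal agreement.
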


\begin{proof}
As $\rho_{G_1}(\sigma^{u'})=\rho_{G_1}(\sigma^v)$, the move $m$ corresponds to a $d$-separation by a vertex in $V(G_2)\setminus C$. Let that vertex be $c'$. Now, if $i,j,k,l \in V(G_2)\setminus C$, then clearly $m$ does not affect the structure of $\sigma^{u'}$. 
So, let $i,k \in V(G_1)\setminus C$ and $ j, l \in V(G_2)\setminus C$. Then we have
\begin{eqnarray*}
i\leftrightarrow j &=&  i \leftrightarrow \cup \ c\leftrightarrow c' \cup c'\leftrightarrow j \\
k\leftrightarrow l &=&  k \leftrightarrow \cup \ c\leftrightarrow c' \cup c'\leftrightarrow l \\
i\leftrightarrow l &=&  i \leftrightarrow \cup \ c\leftrightarrow c' \cup c'\leftrightarrow l \\
k\leftrightarrow j &=&  k \leftrightarrow \cup \ c\leftrightarrow c' \cup c'\leftrightarrow j.
\end{eqnarray*}
This gives us that the multi-digraph of both $\rho_{G_1}(\sigma_{ij}\sigma_{kl})$ and $\rho_{G_1}(\sigma_{il}\sigma_{kj})$ are same. So, we can conclude that $m$ does not affect the structure of $\rho_{G_1}(\sigma^{u'})$ and hence $\rho_{G_1}(\sigma^{u'})=\rho_{G_1}(m(\sigma^{u'}))$.
\end{proof}

\begin{lemma}\label{lemma:final binomials}
Let $G$ be a safe gluing of $G_1$ and $G_2$, with the maps 
$\rho_{G_1}$ and $\rho_{G_2}$ defined as above.
Suppose that $\sigma^u - \sigma^v \in ST_G$ 
and this binomial only involves $\sigma_{ij}$ indeterminates corresponding to treks.
Suppose that $\rho_{G_1}( \sigma^u) = \rho_{G_1}( \sigma^v)$ and 
$\rho_{G_2}( \sigma^u) = \rho_{G_2}( \sigma^v)$.
Then $\sigma^u$ and $\sigma^v$ can be connected by quadratic binomials
in $CI_G$. 
\end{lemma}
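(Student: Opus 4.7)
The plan is to identify precisely which indeterminates can differ between $\sigma^u$ and $\sigma^v$, recast the allowed differences as a bipartite multigraph degree-sequence problem, and then realize the swaps that connect such multigraphs as $2\times 2$ minors of $\Sigma$ corresponding to $d$-separation by the choke point $c$. Throughout, set $A=V(G_1)\setminus C$ and $B=V(G_2)\setminus C$, in keeping with the notation of Lemma~\ref{lemma:invalid move}.

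First, I would carry out a careful bookkeeping of the joint fibers of $(\rho_{G_1},\rho_{G_2})$: for each unordered index pair $\{i,j\}\subseteq V(G)$, one computes the image pair $(\rho_{G_1}(\{i,j\}),\rho_{G_2}(\{i,j\}))$. The only pair types for which this joint image fails to determine $\{i,j\}$ uniquely are the four mixed families
\[
\{a,b\},\quad \{a,c\},\quad \{c,b\},\quad \{c,c\},\qquad a\in A,\ b\in B.
\]
Consequently, the two hypotheses $\rho_{G_1}(\sigma^u)=\rho_{G_1}(\sigma^v)$ and $\rho_{G_2}(\sigma^u)=\rho_{G_2}(\sigma^v)$ force $\sigma^u$ and $\sigma^v$ to agree on the exponent of every other $\sigma_{ij}$; the only freedom lies in the exponents of the mixed variables $\sigma_{ab},\sigma_{ac},\sigma_{cb},\sigma_{cc}$, whose differences satisfy the linear constraints imposed by the two projection identities on the mixed fibers.

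Next, I would encode the mixed-variable exponents of a monomial as edges of a bipartite multigraph $H$ with left vertex set $A\cup\{c_L\}$ and right vertex set $B\cup\{c_R\}$, where $\sigma_{ab}$, $\sigma_{ac}$, $\sigma_{cb}$, and $\sigma_{cc}$ become the edges $a{-}b$, $a{-}c_R$, $c_L{-}b$, and $c_L{-}c_R$ respectively. Then $\rho_{G_1}(\sigma^u)=\rho_{G_1}(\sigma^v)$ amounts to equality of left-side degree sequences of $H(\sigma^u)$ and $H(\sigma^v)$, and $\rho_{G_2}(\sigma^u)=\rho_{G_2}(\sigma^v)$ to equality of right-side degree sequences. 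The classical fact (Markov basis for fibers of bipartite degree sequences, equivalently the integer transportation problem) is that two bipartite multigraphs with the same bipartite degree sequences are connected by a sequence of elementary $2$-swaps $(i,j),(k,l)\mapsto(i,l),(k,j)$; each such swap lifts to a quadratic binomial $\sigma_{ij}\sigma_{kl}-\sigma_{il}\sigma_{kj}$ with $i,k\in A\cup\{c\}$ and $j,l\in B\cup\{c\}$.

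Finally, each such binomial is a $2\times 2$ minor of $\Sigma_{\{i,k,c\},\{j,l,c\}}$, and by the choke-point property of $c$ (as exploited in the proof of Lemma~\ref{lemma:invalid move}), the set $\{c\}$ $d$-separates $\{i,k\}\setminus\{c\}$ from $\{j,l\}\setminus\{c\}$; hence the binomial is a generator of $CI_G$. The main obstacle will be the first-step case analysis: correctly classifying all ten possible pair types of indices and verifying both that non-mixed pairs force their exponents to agree and that the linear relations in the mixed part precisely express the bipartite degree-sequence condition. A secondary technicality is the degenerate swaps in which one or more of $i,j,k,l$ equals $c$, where one must identify the $d$-separation with a possibly smaller index set; in particular, one must recognize the binomial $\sigma_{ac}\sigma_{cb}-\sigma_{ab}\sigma_{cc}$ as coming from the $d$-separation of $\{a\}$ from $\{b\}$ by $\{c\}$.
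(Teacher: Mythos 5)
Your proposal is correct in broad outline and takes a genuinely different route from the paper. The paper's proof is a local ``longest-common-trek'' chase: it picks a single trek $i\leftrightarrow j$ appearing in $\sigma^u$ but not $\sigma^v$, locates the trek $i'\leftrightarrow j'$ of $\sigma^v$ with the longest common segment, then applies one quadratic move to extend that agreement, and iterates. You instead recognize, after the reduction to the mixed variables $\sigma_{ab},\sigma_{ac},\sigma_{cb},\sigma_{cc}$, that the remaining problem is exactly connectivity of a transportation-polytope fiber (bipartite multigraphs with fixed degree sequence on $A\cup\{c_L\}$ versus $B\cup\{c_R\}$), which has the classical $2\times 2$-swap Markov basis. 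This is more structural, and it has the advantage of cleanly reducing the inductive content of the proof to a known theorem rather than a bespoke trek-surgery argument. Both proofs end up needing the same algebraic facts: that the $2\times 2$ swaps are actual trek-to-trek moves (every trek from $A$ to $B$ decomposes through $c$, and the halves recombine freely, so the trek pattern on $(A\cup c)\times(B\cup c)$ is a product pattern with no structural zeros), and that these minors are generators of $CI_G$ coming from the $d$-separation of $A$ from $B$ by $\{c\}$, which is what Lemma~\ref{lemma:invalid move} is really checking.

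One imprecision worth flagging in the bookkeeping step you already identify as delicate: the criterion ``the joint image under $(\rho_{G_1},\rho_{G_2})$ fails to determine $\{i,j\}$ uniquely'' is not quite the right test, since the hypotheses constrain exponent \emph{sums} over each map's fiber separately, not joint fibers. Indeed $\{a,c\}$ \emph{is} uniquely determined by its joint image, yet $u_{ac}$ and $v_{ac}$ can differ; conversely, what actually forces $u_{ij}=v_{ij}$ is that $\sigma_{ij}$ is the unique $\rho_{G_1}$-preimage of $\rho_{G_1}(\sigma_{ij})$ \emph{or} the unique $\rho_{G_2}$-preimage of $\rho_{G_2}(\sigma_{ij})$. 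Under that correct criterion, $\sigma_{dc}$ with $d\in D$ also fails to be pinned down directly and must be handled separately: one first notes $u_{db}=v_{db}$ (unique $\rho_{G_2}$-preimage), and then the $\rho_{G_1}$-fiber equation $u_{dc}+\sum_b u_{db}=v_{dc}+\sum_b v_{db}$ forces $u_{dc}=v_{dc}$. Your final support set is thus correct, but the justification needs to be run via single-map fibers, not joint fibers. With that fix the degree-sequence identification and the rest of the argument go through.
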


\begin{proof}
We can assume that $\sigma^u$ and $\sigma^v$ have no indeterminates in common.
Since $\sigma^u$ and $\sigma^v$ have the same image under $\rho_{G_1}$ and $\rho_{G_2}$
this implies
 that we cannot have any indeterminates of the form 
 $\sigma_{ij},i,j\in V(G_1)\setminus \{c\}$ or $i,j\in V(G_2)\setminus \{c\}$ 
 in the monomial factors. This is because the indeterminates of this 
 form are mapped to itself by either of the two maps which would mean 
 that $\sigma^{u}$ and $\sigma^v$ would still have some more common factors between them. 
So, all the indeterminates appearing in the two factors need to 
contain $c$ as an end point or as a vertex in their corresponding 
shortest treks and both end points not lying within the same subgraph (i.e, $G_1$ or $G_2$).

Consider an arbitrary trek $i\leftrightarrow j$ in 
$\sigma^{u}$ which is not present in $\sigma^v$. 
We select the trek in $\sigma^v$ which has the highest number 
of common edges with $i\leftrightarrow j$. 
Let that trek be $i'\leftrightarrow j'$ and let 
$s\leftrightarrow t $ be the common trek in both the treks. 
Let $s_1$ and $t_1$ be the vertices adjacent to 
$s$ and $t$ respectively in $i\leftrightarrow j$. Similarly, 
let $s'$ and $t'$ be the vertices adjacent to $s$ and $t$ 
respectively in $i'\leftrightarrow j'$. 
Let $p$ be the vertex in $s\leftrightarrow t$ 
adjacent to $t$ (see Figure \ref{fig:path moves} for an illustration of the idea).

As $\psi_G(\sigma^{v'}-\sigma^v)=0$ 
there must exist a path $x\leftrightarrow y$ in 
$\sigma^v$ containing the edge $t\leftrightarrow t_1$. 
We know that all the indeterminates appearing in both the 
monomial factors need to contain $c$. 
This implies that $c$ must lie within the common trek $s\leftrightarrow t$. 
Let $i,i'$ and $x$ be in $V(G_1)\setminus C$ and $j,j',y$ be in $V(G_2)\setminus C$. 
The move $m=\sigma_{i'j'}\sigma_{xy}-\sigma_{i'y}\sigma_{xj'}$ 
is now a valid move as none of the vertices in $i'\leftrightarrow p$ can have a shorter connection to any vertex in $t_1\leftrightarrow y$ (as every shortest trek from a vertex in $V(G_1)\setminus C$ to $V(G_2)\setminus C$ must pass through $c$).

Applying $m$ on $\sigma^v$ increases the length of the 
common trek between $i\leftrightarrow j$ and $i'\leftrightarrow j'$ by at least 1. 
As any move preserves the kernel of $\psi_G$, $m(\sigma^{u})- \sigma^v$ 
still lies in $ST_G$. Repeating this process again, we can continue to shorten the length of
the disagreement until the resulting monomials are the same. 
\end{proof}

\begin{figure}
\begin{tikzpicture}
\filldraw[black]
(0,0) circle [radius=.04] node [below] {$i'$}
(1,0) circle [radius=.02] 
(1.15,0) circle [radius=.02] 
(1.3,0) circle [radius=.02] 
(1.45,0) circle [radius=.02]
(2,0) circle [radius=.02] node [below] {$s'$}
(3,0) circle [radius=.02] node [below] {$s$}
(3.15,0) circle [radius=.02] 
(3.3,0) circle [radius=.02]  
(3.45,0) circle [radius=.02] 
(4,0) circle [radius=.02] node [below] {$p$}
(5,0) circle [radius=.02] node [below] {$t$}
(6,0) circle [radius=.02] node [below] {$t'$}
(6.15,0) circle [radius=.02] 
(6.3,0) circle [radius=.02] 
(6.45,0) circle [radius=.02] 
(6.6,0) circle [radius=.02]
(7,0) circle [radius=.04] node [below] {$j'$}
(4,-1) circle [radius=0] node [below] {$\sigma^{v}$}
(2,1.75) circle [radius=.04] node [above] {$x$}
(4,.25) circle [radius=.02]
(5,.25) circle [radius=.02]
(5.66,.87) circle [radius=.03] node [below] {$t_1$}
(6.6,1.75) circle [radius=.04] node [above] {$y$}
(2.28,1.34) circle [radius=.02]
(2.587,.87) circle [radius=.03] 
(2.4,1.15) circle [radius=.02]
(5.95,1.15) circle [radius=.02]
(6.15,1.34) circle [radius=.02]

(8,0) circle [radius=.04] node [below] {$i$}
(9,0) circle [radius=.02] 
(9.15,0) circle [radius=.02] 
(9.3,0) circle [radius=.02] 
(9.45,0) circle [radius=.02]
(10,0) circle [radius=.02] node [below] {$s_1$}
(11,0) circle [radius=.02] node [below] {$s$}
(11.15,0) circle [radius=.02] 
(11.3,0) circle [radius=.02]  
(11.45,0) circle [radius=.02] 
(12,0) circle [radius=.02] node [below] {$p$}
(13,0) circle [radius=.02] node [below] {$t$}
(14,0) circle [radius=.02] node [below] {$t_1$}
(14.15,0) circle [radius=.02] 
(14.3,0) circle [radius=.02] 
(14.45,0) circle [radius=.02] 
(14.6,0) circle [radius=.02]
(15,0) circle [radius=.04] node [below] {$j$}
(3,.25) circle [radius=.02]
(11.5,-1) circle [radius=0] node [below] {$\sigma^{v'}$};

\draw
(0,0)--++(1,0)
(1.45,0)--++(.55,0)
(2,0)--++(1.15,0)
(3.45,0)--++(2.7,0)
(6.6,0)--++(.4,0)
(2,1.75)--++(1,-1.5)
(3,.25)--++(2,0)
(5,.25)--++(1.6,1.5)

(8,0)--++(1,0)
(9.45,0)--++(.55,0)
(10,0)--++(1.15,0)
(11.45,0)--++(2.7,0)
(14.6,0)--++(.4,0);

\end{tikzpicture}
\caption{Graphs of $\sigma^v$ and $\sigma^{v'}$. We use undirected treks in the figure to represent treks of unknown direction as the proof is independent of the direction of the treks.\label{fig:path moves}}
\end{figure}
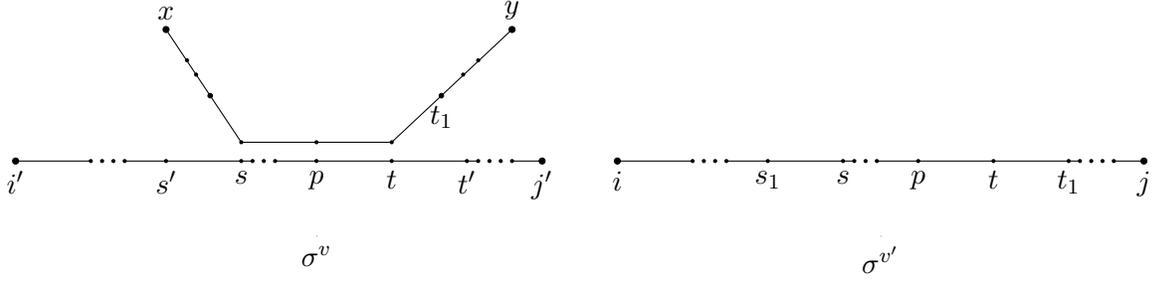

Using all the results and observations that we have so far, we give a proof of 
the main result of this section, which shows that quadratic generation of the 
shortest trek ideals is preserved under the safe gluing operation.

\begin{thm}\label{thm:ker(psi_G)=CI_G}
Let $G_1$ and $G_2$ be two DAGs such that $ST_{G_1}=CI_{G_1}$ and $ST_{G_2}=CI_{G_2}$. If $G$ is the DAG obtained after a safe gluing of $G_1$ and $G_2$ at an $N$-clique, then $ST_G$ is equal to both $CI_G$ and $I_G$, and hence $I_G$ is toric and is generated by quadratic binomials. 
\end{thm}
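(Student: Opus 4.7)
The plan is to establish the chain of containments $ST_G \subseteq CI_G \subseteq I_G$ and then upgrade them to equalities via a dimension-plus-primality argument. The middle containment is automatic, so the real work will be proving $ST_G \subseteq CI_G$. A preliminary step will be to check that $\psi_G$ is well defined on $G$: since $c$ is a choke point, every trek from $V(G_1)\setminus D$ to $V(G_2)\setminus D$ decomposes uniquely as the concatenation of a trek in $G_1$ ending at $c$ with a trek in $G_2$ beginning at $c$, so uniqueness of shortest treks for $G_1$ and $G_2$ (which follows from $ST_{G_1}=CI_{G_1}$ and $ST_{G_2}=CI_{G_2}$) is inherited by $G$. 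The main inclusion will be proved by applying Theorem \ref{thm:connected-generators} to the candidate set $\mathcal{F}$ of $2\times 2$ minors of $\Sigma_{A\cup C', B\cup C'}$ as $C'$ ranges over $d$-separators of $G$: it suffices to show that for any $f=\sigma^u-\sigma^v\in ST_G$ (with all indeterminates corresponding to actual treks), $\sigma^u$ and $\sigma^v$ are connected in the fiber graph $M^{-1}(Mu)_{\mathcal{F}}$.

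The connectivity between $\sigma^u$ and $\sigma^v$ will be carried out in three stages built directly from the preparatory lemmas. In stage one, invoking $ST_{G_1}=CI_{G_1}$, Lemma \ref{lemma:match rho1} produces a sequence of $\rho_{G_1}$-lifted quadratic moves in $CI_G$ that take $\sigma^u$ to some $\sigma^{u'}$ with $\rho_{G_1}(\sigma^{u'})=\rho_{G_1}(\sigma^v)$; Lemmas \ref{lemma:invalid move} and \ref{lemma:lifted binomials in ST_G} confirm these moves simultaneously lie in $ST_G$, so they really are edges of the fiber graph. In stage two, I will apply the symmetric $G_2$-version of Lemma \ref{lemma:match rho1} to supply $\rho_{G_2}$-lifted quadratic moves in $CI_G\cap ST_G$ that further transform $\sigma^{u'}$ into $\sigma^{u''}$ with $\rho_{G_2}(\sigma^{u''})=\rho_{G_2}(\sigma^v)$. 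The non-interference Proposition \ref{prop:similarity preserved} is the key observation that makes stage two compatible with stage one: each stage-two move leaves $\rho_{G_1}$ unchanged, so the equality $\rho_{G_1}(\sigma^{u''})=\rho_{G_1}(\sigma^v)$ still holds at the end of stage two. Stage three then applies Lemma \ref{lemma:final binomials} directly to $\sigma^{u''}$ and $\sigma^v$ to finish connecting them by quadratic binomials in $CI_G$.

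Concatenating the three stages yields a path in $M^{-1}(Mu)_{\mathcal{F}}$ from $\sigma^u$ to $\sigma^v$, whose edges all lie in $\mathcal{F}\subseteq CI_G$. Theorem \ref{thm:connected-generators} then gives $ST_G\subseteq CI_G$, and combined with the universal inclusion $CI_G\subseteq I_G$ this yields $ST_G\subseteq I_G$. To close the argument, I will use that $ST_G$ is prime (as the kernel of the monomial map $\psi_G$) of dimension $n+e$ by Proposition \ref{prop:dim of STG}, while $I_G$ is prime of the same dimension $n+e$ by the standard identifiability of Gaussian DAG models; a prime contained in a prime of the same dimension must be equal to it, so $ST_G=CI_G=I_G$. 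Since $\mathcal{F}$ consists of $2\times 2$ minor binomials together with the monomial generators $\sigma_{ij}$ for pairs of vertices with no trek between them, this simultaneously shows that $I_G$ is toric and generated in degree at most $2$. The main obstacle is not in the present theorem but in Proposition \ref{prop:similarity preserved}: without that non-interference result one has no way to prevent stage two from undoing the progress of stage one, so ensuring the two lifting procedures can be performed in sequence rather than oscillating is the crux that makes the whole argument go through.
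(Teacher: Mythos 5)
Your proposal is correct and follows essentially the same route as the paper's proof: a three-stage connectivity argument (Lemma \ref{lemma:match rho1} for $G_1$, its $G_2$-analogue, then Lemma \ref{lemma:final binomials}) establishes $ST_G\subseteq CI_G$, after which primality and equality of dimensions force $ST_G=CI_G=I_G$. The one thing you make explicit that the paper leaves tacit --- and it is a genuine improvement in rigor --- is the invocation of Lemmas \ref{lemma:invalid move} and \ref{lemma:lifted binomials in ST_G} to certify that the lifted quadratic moves actually lie in $ST_G$ and hence really are edges of the fiber graph, and the role of Proposition \ref{prop:similarity preserved} in preventing stage two from destroying the $\rho_{G_1}$-agreement achieved in stage one.
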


\begin{proof}[Proof of Theorem \ref{thm:ker(psi_G)=CI_G}]
Let $\sigma^u-\sigma^v$ be an arbitrary binomial in $ST_G$. Then in order to prove that 
$ST_G=CI_G$, we need to show that $\sigma^u$ and $\sigma^v$ are 
connected by the moves in $\mathcal{F}$, where $\mathcal{F}$ is the set of all the generators of $CI_G$. 

Lemma \ref{lemma:match rho1} shows that we can apply quadratic moves in $CI_{G}$
to transform $\sigma^u$ into a monomial $\sigma^{u'}$ such that $\rho_{G_1}(\sigma^{u'}) = 
\rho_{G_1}(\sigma^{v})$.  Applying the analogous result for $G_2$, we see that we
can apply quadratic moves in $CI_G$ to transform $\sigma^{u'}$ into $\sigma^{v'}$
such that $\rho_{G_1}(\sigma^{v'}) = \rho_{G_1}(\sigma^{v})$  and  
$\rho_{G_2}(\sigma^{v'}) = \rho_{G_2}(\sigma^{v})$.
Then applying Lemma \ref{lemma:final binomials}, we see that 
$\sigma^{v'}$ and $\sigma^v$ can be connected using binomials in $CI_G$. 
This shows that $ST_G \subseteq CI_G  \subseteq I_G$.   
But as $I_G$ and $ST_G$ are both prime ideals of the same dimension, this shows that
all three ideals are equal. Now that we have the equality that $I_G=CI_G$ and is also toric, we can conclude that $I_G$ is generated by the $2 \times 2$ minors of $\Sigma_{\{A\cup C, B\cup C\}}$ for all possible $d$-separations of $G$ with $|C|=1$.
\end{proof}


\section{Conjectures} \label{sec:conjectures}

We close the paper by giving some conjectures about the Gaussian DAGs with toric vanishing ideals.
These include some main conjectures, and also conjectures of a more technical nature
that would be important tools for proving the main conjectures.
We also discuss some consequences of these auxiliary conjectures.

Our first main conjecture relates goes with a running theme throughout the paper,
identifying the underlying combinatorics of the toric structure when
$I_G$ is actually a toric ideal.

\begin{conj}\label{conj:toric iff I_G=ST_G}
A DAG $G$ has a toric vanishing ideal if and only if $I_G=ST_G$.
\end{conj}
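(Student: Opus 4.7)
The ``if'' direction is immediate: by construction $\psi_G$ is a monomial map whenever it is defined, so $ST_G$ is always toric, and $I_G = ST_G$ forces $I_G$ to be toric. For the ``only if'' direction I would break the argument into two steps. First, assuming $\psi_G$ is well defined, show that $I_G \subseteq ST_G$ so that equality follows from both ideals being prime of dimension $n+e$ (using Proposition \ref{prop:dim of STG}). Second, show that toricity of $I_G$ forces $\psi_G$ to be well defined in the first place.

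The first step proceeds via a deformation argument. Substitute $\lambda_{ij} \mapsto t\,\lambda_{ij}$ to obtain a one-parameter family of maps $\phi_G^{(t)}$. Then
\[
\phi_G^{(t)}(\sigma_{ij}) \;=\; \sum_{P \in T(i,j)} a_{\rmtop(P)}\, t^{|P|} \prod_{k \rightarrow l\,\in\, P} \lambda_{kl}
\]
has minimum $t$-degree equal to $\ell(i,j)$, the length of the shortest trek from $i$ to $j$, and the coefficient at that degree is the single monomial $\psi_G(\sigma_{ij})$ by the assumed uniqueness. Since $I_G$ is toric, it is generated by binomials $\sigma^u - \sigma^v$ with $\phi_G(\sigma^u) = \phi_G(\sigma^v)$; plugging into $\phi_G^{(t)}$ and comparing coefficients of the minimum $t$-degree yields $\psi_G(\sigma^u) = \psi_G(\sigma^v)$, so each such binomial lies in $ST_G$. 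The corner case where some factor $\sigma_{ij}$ corresponds to a pair with no connecting trek is handled by observing that both $\phi_G$ and $\psi_G$ send that monomial to zero. Combined with the dimension and primeness comparison, this yields $I_G = ST_G$.

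The hard step is to show that toricity of $I_G$ implies uniqueness of shortest treks. I would argue by contradiction: suppose some pair $(i,j)$ admits at least two distinct shortest treks $P_1, P_2$, and choose such a pair with $\ell(i,j)$ minimal. Then for every pair $(k,l)$ with $\ell(k,l) < \ell(i,j)$ the shortest trek is unique, so the deformation argument above applies to the indeterminates corresponding to those shorter-trek pairs. The plan is to exhibit a polynomial in $I_G$ whose $t^{\ell(i,j)}$-initial form (under the $t$-deformation) necessarily has at least three monomials, namely the two distinct leading monomials of $\phi_G(\sigma_{ij})$ together with an obstruction coming from the uniquely determined $\psi_G$-images of the other indeterminates, and to show that no such initial form can arise from any binomial combination, contradicting toricity. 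An alternative tropical formulation is appealing: the tropicalization of a toric variety is a linear subspace, but under the weight vector induced by the $t$-deformation the tropicalization of $V(I_G)$ should exhibit a ``bend'' at any pair with multiple shortest treks, again contradicting linearity. Making either formulation rigorous is the main obstacle, and will likely require an inductive argument in which shortest treks are processed in increasing order of length, so that each inductive step can appeal to Step 1 applied to the sub-DAG of ``already processed'' pairs.
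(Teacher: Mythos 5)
The statement in question is presented as an open conjecture in the paper: the authors give no proof, only partial evidence. The gluing/sink constructions of Sections~\ref{sec:safegluing} and~\ref{sec:sinks} preserve the property $I_G=ST_G$ (Corollaries~\ref{cor:singlesinkglue}, \ref{cor:gluing at sinks-toric}, \ref{cor:one sink-toric}), Theorem~\ref{thm:ker(psi_G)=CI_G} handles the quadratic case, and Lemma~\ref{lemma: no shortest trek} handles the case of an undefined shortest trek map only \emph{conditionally} on Conjecture~\ref{conj:higher d-separation}. So there is no proof in the paper to compare your attempt against; it can only be judged on its own merits.

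Your ``if'' direction is correct (and trivial, as you note). Your Step~1 of the ``only if'' direction is, as far as I can tell, a correct argument that does not appear in the paper and is a genuine partial result. The key observation — that the substitution $\lambda_{ij}\mapsto t\lambda_{ij}$ makes $\psi_G(\sigma_{ij})$ the unique lowest-$t$-degree term of $\phi_G^{(t)}(\sigma_{ij})$, that uniqueness of the shortest trek prevents cancellation in the lowest-degree term of any product $\phi_G^{(t)}(\sigma^u)$, and hence that any binomial relation $\phi_G(\sigma^u)=\phi_G(\sigma^v)$ specializes to $\psi_G(\sigma^u)=\psi_G(\sigma^v)$ — cleanly shows that every binomial in $I_G$ lies in $ST_G$. (The corner case where some $\sigma_{ij}$ corresponds to a trek-free pair is handled as you indicate: both maps send the whole monomial to zero.) Toricity of $I_G$ then gives $I_G\subseteq ST_G$, and equality follows from Proposition~\ref{prop:dim of STG}, the standard fact $\dim I_G=n+e$, and primeness of both ideals. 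This unconditionally settles the conjecture under the extra hypothesis that $\psi_G$ is well-defined — which also rigorously confirms facts the paper only illustrates by example, such as that $G_3$ of Example~\ref{ex:shortest trek} has a non-toric vanishing ideal.

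The gap is Step~2, and you acknowledge it. You need to show that toricity of $I_G$ forces a unique shortest trek between every trek-connected pair of vertices, and neither the minimal-counterexample/initial-form sketch nor the tropical ``bend'' reformulation is carried out. This is exactly where the conjecture is genuinely open: the paper's own treatment of this case (Lemma~\ref{lemma: no shortest trek}) rests on Conjecture~\ref{conj:higher d-separation}, which is itself unproved, and its proof in the paper in turn invokes an auxiliary conditional result (Lemma~\ref{lemma: collider and non collider}). So your proposal is incomplete, but the part you do complete is new relative to the paper and sharply isolates the one remaining obstruction; if you could extend the deformation idea to prove Conjecture~\ref{conj:higher d-separation} (or directly prove Step~2), you would have the full conjecture.
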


Note, as mentioned previously, there are DAGs $G$ such that $ST_G$ exists, but it not equal to $I_G$.
Our second main conjecture concerns the combinatorial construction of the DAGs for which
$I_G$ is toric.

\begin{conj}\label{conj: toric implies safe gluing}
If $G$ is a DAG such that $I_G$ is toric, 
then either:
\begin{enumerate}
\item   $G$ is a complete DAG,
\item  $G$ is either a safe gluing or the gluing at sinks of two smaller DAGs that also have toric vanishing ideals, or
\item $G$ is obtained by adding a sink to a smaller DAG which has a toric vanishing ideal.
\end{enumerate}
\end{conj}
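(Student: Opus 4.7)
The plan is strong induction on $|V(G)|$. Complete DAGs form the base case and fall into option (1). For the inductive step, given a non-complete $G$ with $I_G$ toric, I would examine the sink structure of $G$ first and only fall back on safe gluing when no sink decomposition is available.

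First, look for a sink $s$ of $G$ such that every other vertex has an edge into $s$. If one exists, then $G$ arises by adding $s$ to $G\setminus\{s\}$, placing us in case (3) provided $G\setminus\{s\}$ also has a toric vanishing ideal. A converse of Theorem \ref{thm:one sink} should give exactly this: the generators of $I_G$ that do not involve any indeterminate of the form $\sigma_{is}$ are precisely the generators of $I_{G\setminus\{s\}}$, so toricity transfers to the smaller DAG. Second, if $G$ has a set $S$ of sinks and a partition $V(G)\setminus S=V_1\sqcup V_2$ with no edges between $V_1$ and $V_2$, together with the trek-separation condition of Theorem \ref{thm: multiple sinks}, then we recover option (2) via gluing at sinks, with the inductive hypothesis applied to each piece $G|_{V_1\cup S}$ and $G|_{V_2\cup S}$.

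The difficult case is when $G$ admits no sink-based decomposition. Here I would invoke Conjecture \ref{conj:toric iff I_G=ST_G} so that $I_G = ST_G$ and the combinatorics of $G$ is governed by shortest treks. The aim is to locate a vertex $c$ that is a choke point for some partition of $V(G)$, together with a clique $C = \{c\}\cup D$ such that every vertex of $D$ is a collider on every trek through $C$. Given such data, $G$ is a safe gluing of $G|_{V_1\cup C}$ and $G|_{V_2\cup C}$, and Theorem \ref{thm:safe gluing at n clique} (or its conjectural extension, Conjecture \ref{conj:safe gluing conjecture}) would propagate toricity backward to the two pieces, closing the induction.

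The main obstacle is precisely the existence of such a choke point in this third case. Toricity alone does not obviously force a global choke point, and a proof probably needs a refined analysis of the shortest trek matrix $M$: one would need to classify the trek swaps that produce non-binomial or higher-degree obstructions in $\ker(M)$ and show that avoiding all of them leaves only decomposable DAGs. A natural intermediate target is to classify the minimal ``irreducible'' toric DAGs, namely those admitting none of the three decompositions, and show this list consists solely of complete DAGs. Since Conjecture \ref{conj:toric iff I_G=ST_G} itself is still open, carrying out this program likely demands genuinely new combinatorial tools beyond those developed in this paper.
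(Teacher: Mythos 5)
The statement you are asked to prove is Conjecture~\ref{conj: toric implies safe gluing}; the paper offers no proof of it, and indeed the whole final section is a collection of open conjectures, so there is no paper argument to compare your proposal against. Your outline is a sensible heuristic program, and you are candid that the central difficulty --- extracting a choke point $c$ together with a suitable clique $C=\{c\}\cup D$ from the bare hypothesis that $I_G$ is toric, in the case where no sink-based decomposition is available --- is unresolved. That is the right place to locate the gap, and the paper's Conjectures~\ref{conj:toric iff I_G=ST_G} and~\ref{conj:decompose G} are exactly the tools one would want to invoke there; but both are themselves open, so the induction does not close.

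There is a second, less prominent gap you pass over too quickly: the direction of the toricity transfer. Theorem~\ref{thm:safe gluing at n clique}, Conjecture~\ref{conj:safe gluing conjecture}, and Corollary~\ref{cor:gluing at sinks-toric} all run forward (toric pieces $\Rightarrow$ toric whole), whereas your induction needs the reverse implication (toric $G$ decomposed as a gluing $\Rightarrow$ toric pieces). For the ``adding a sink'' case this reverse does follow essentially for free from the equality $I_{G'} = I_G\cdot \cc[\sigma_{ij}: i,j\in V(G)\cup\{s\}]$ in Theorem~\ref{thm:one sink}, as you note, but for gluing at sinks and for safe gluing it does not: the decomposition in Theorem~\ref{thm: multiple sinks} expresses $I_G$ in terms of generators of $I_{G_1}$ and $I_{G_2}$, and one would still have to argue that toricity of the sum forces toricity of each summand (plausible because the relevant variable sets are nearly disjoint, but not automatic and not proved in the paper). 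Without that reverse implication, even locating a decomposition does not advance the induction. So the proposal is a coherent research plan rather than a proof, and its two load-bearing steps --- existence of the choke-point clique, and backward propagation of toricity through the gluing operations --- each require arguments the paper does not supply.
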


Important auxiliary conjectures that we have seen so far in the paper
concern the safe gluing operation, in particular, Conjecture \ref{conj:safe gluing conjecture},
that safe gluing preserves the property of $I_G$ being equal to $ST_G$.
Another conjecture that seems key to proving 
classification results for toric vanishing ideals is the 
following conjecture, that would rule out many graphs from having toric vanishing ideals.

\begin{conj}\label{conj:higher d-separation}
Let $G$ be a DAG and $i,j$ be two vertices in $G$ such that the minimal size of a d-separating set of $i$ and $j$
is $2$ or larger. Then $I_G$ is not toric.
\end{conj}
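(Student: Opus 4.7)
The approach is to assume $I_G$ is toric and derive a contradiction by exhibiting a single vertex that d-separates $\{i\}$ and $\{j\}$. The key reduction is through Conjecture~\ref{conj:toric iff I_G=ST_G}: if $I_G$ is toric, then $I_G = ST_G$, so the shortest trek map $\psi_G$ is well-defined and between any two vertices there is a unique shortest trek or none. If there is no trek between $i$ and $j$, then $\emptyset$ d-separates $\{i\},\{j\}$, contradicting the hypothesis. Otherwise let $i \leftrightarrow j$ be the unique shortest trek with top $t$; this trek must have length at least $2$, since a single-edge trek admits no d-separating set at all and the hypothesis is then vacuous.

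Choose $c \in V(G) \setminus \{i,j\}$ on $i \leftrightarrow j$ according to the following rule: if $t \notin \{i,j\}$ set $c = t$; if $t = i$ set $c$ to be the neighbor of $j$ along $i \leftrightarrow j$; if $t = j$ set $c$ to be the neighbor of $i$ along $i \leftrightarrow j$. I claim that the subpaths of $i \leftrightarrow j$ from $i$ to $c$ and from $c$ to $j$ are the unique shortest treks between these pairs of endpoints. The reason is that any strictly shorter alternative trek, concatenated with the complementary subpath, would form a walk from $i$ to $j$ whose two edges at $c$ do not both point into $c$—this is precisely what our case-by-case choice of $c$ guarantees—so the concatenation is a genuine simple trek, contradicting the minimality or uniqueness of $i \leftrightarrow j$. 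A direct calculation of the top vertices and edge products then gives
\[
\psi_G(\sigma_{ij})\psi_G(\sigma_{cc}) \;=\; \psi_G(\sigma_{ic})\psi_G(\sigma_{cj}),
\]
so the binomial $\sigma_{ij}\sigma_{cc} - \sigma_{ic}\sigma_{cj}$ lies in $ST_G = I_G$. This binomial is the $2\times 2$ minor of $\Sigma_{\{i,c\},\{c,j\}}$, whose vanishing on the Gaussian DAG model is equivalent to the conditional independence statement $i \indep j \mid c$. Since $I_G$-membership means the CI holds for every distribution in the model, generic faithfulness of Gaussian DAG parametrizations (the unfaithful parameters form a proper Zariski-closed subset of parameter space) forces this CI to be a consequence of d-separation. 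Hence $\{c\}$ d-separates $\{i\}$ from $\{j\}$, contradicting the hypothesis that the minimum d-separating set has size at least two.

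The principal obstacle is the reliance on Conjecture~\ref{conj:toric iff I_G=ST_G}, which is itself open. Removing this dependence would require a purely algebraic argument that the irreducible $(|C|+1)\times(|C|+1)$-minor of $\Sigma_{\{i\}\cup C,\{j\}\cup C}$ (with $|C| \geq 2$) cannot lie in a prime binomial ideal of the shape $I_G$—something that would presumably require a careful Gr\"obner-basis analysis of fibers of the simple trek map $\phi_G$ or a structural result ruling out specific binomial-ideal embeddings. A secondary, more minor technical point is that the concatenation arguments establishing uniqueness of the relevant shortest treks must handle potential non-simplicity of the walk obtained by concatenation; standard shortcutting arguments preserve the length bound and so do not affect the conclusion.
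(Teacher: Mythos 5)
This statement is labeled a \emph{conjecture} in the paper (Conjecture~\ref{conj:higher d-separation}), and the paper offers no proof of it; indeed, the subsequent Lemmas~\ref{lemma: collider and non collider} and \ref{lemma: no shortest trek} are explicitly stated \emph{conditional} on this conjecture. So there is no proof in the paper to compare your argument against, and what you have written is not a proof of the statement either: the first move in your argument is to invoke Conjecture~\ref{conj:toric iff I_G=ST_G}, which is itself open, and you acknowledge this. What you have actually sketched is the implication ``Conjecture~\ref{conj:toric iff I_G=ST_G} $\Rightarrow$ Conjecture~\ref{conj:higher d-separation}.'' That is a legitimate and potentially useful observation about how the paper's open questions are related, but it should be presented as such, not as a proof.

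Conditional on Conjecture~\ref{conj:toric iff I_G=ST_G}, the outline is roughly sound. A few remarks on where it needs tightening. First, the appeal to ``generic faithfulness'' is an unnecessary detour: if the $2\times 2$ minor $\sigma_{ij}\sigma_{cc}-\sigma_{ic}\sigma_{cj}$ lies in $I_G$, then the corresponding conditional independence $i \indep j \mid c$ holds for every covariance matrix in the model, and Proposition~\ref{prop:dsep} (the completeness direction of $d$-separation) directly gives that $\{c\}$ $d$-separates $i$ from $j$; no extra faithfulness argument is needed. Second, for the identity $\psi_G(\sigma_{ij})\psi_G(\sigma_{cc})=\psi_G(\sigma_{ic})\psi_G(\sigma_{cj})$ you need not merely that the two subpaths of $i\leftrightarrow j$ at $c$ are \emph{shortest} treks to $c$, but that they are \emph{the} unique shortest treks (since $\psi_G$ is defined via the unique shortest trek). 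Your concatenation argument does give this with a bit more care: if $P'$ were a different trek from $i$ to $c$ of length at most that of the subpath, concatenating $P'$ with the $c$-to-$j$ subpath and then shortcutting any repeated vertex (which, for a trek-shaped walk, always yields a shorter or equal-length trek) produces a trek from $i$ to $j$ that is either strictly shorter than $i\leftrightarrow j$ or equal in length but different, contradicting uniqueness of the shortest trek from $i$ to $j$. You should also verify that shortcutting a trek-shaped walk produces a trek (this holds because a repeated vertex must appear once on each directed leg, and shortcutting at it preserves the ascending-then-descending structure). Finally, your reading of the boundary cases (no trek; a single edge $i\to j$) is fine, though strictly speaking the single-edge case is one where no $d$-separating set exists, so the hypothesis ``minimal size $\ge 2$'' is best read as presupposing that a $d$-separating set exists.

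In short: you have not closed the gap the paper left open, and you were upfront about that, but you have identified a plausible conditional implication between two of the paper's conjectures. If you want to advance this line, the honest framing is ``Conjecture~\ref{conj:toric iff I_G=ST_G} implies Conjecture~\ref{conj:higher d-separation},'' and the hard content---ruling out higher-rank minor constraints in a prime ideal generated by binomials of the shortest-trek form, without assuming $I_G = ST_G$---remains untouched.
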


Assuming the conjecture is true, we have two results on when the vanishing ideal is not toric.

\begin{lemma}\label{lemma: collider and non collider}
Suppose that Conjecture \ref{conj:higher d-separation} is true.
Let $G$ be a DAG and $i,j$ be two vertices in $G$ having 
at least 2 different paths $P_1$ and $P_2$ between them. 
If $P_2$ is a trek containing the vertex $c$ and $P_1$ is a path having exactly one collider at $c$, 
then $I_G$ is not toric if Conjecture \ref{conj:higher d-separation} is true. 
\end{lemma}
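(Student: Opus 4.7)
The plan is to apply Conjecture \ref{conj:higher d-separation} by exhibiting a pair of vertices in $G$ whose minimum $d$-separating set has size at least two. The key observation will be the conflict at $c$: conditioning on $c$ blocks $P_2$ (a trek with $c$ as a non-collider), but unblocks $P_1$ (since $c$ is the unique collider of $P_1$). This tension should force $d$-separating sets in a neighborhood of $c$ to be large.

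First, I would split into two cases based on the position of $c$ within $P_2$: either (A) $c = \rmtop(P_2)$ or $c$ lies strictly on the $j$-side of $P_2$, or (B) $c$ lies strictly on the $i$-side of $P_2$. Case B is symmetric to Case A (swap $i$ and $j$), so I plan to handle Case A in detail using the pair $(c, j)$; Case B then follows by symmetry using the pair $(i, c)$.

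For Case A, I would consider two sub-paths from $c$ to $j$: the sub-path $P_1^j$ of $P_1$ (colliderless, since $c$ is the unique collider of $P_1$ and is an endpoint of $P_1^j$, hence itself a trek) and the sub-path $Q$ of $P_2$ (also a trek). The first edge of $P_1^j$ at $c$ is incoming (because $c$ is a collider on $P_1$), while the first edge of $Q$ at $c$ is outgoing (because $c$ is at the top or on the $j$-side of $P_2$). Hence $P_1^j$ and $Q$ diverge at $c$. Applying a first-reconvergence argument, let $w$ be the first vertex after $c$ on $P_1^j$ that also lies on $Q$, taking $w = j$ if there is no earlier such vertex. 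By minimality of $w$, the sub-trek of $P_1^j$ from $c$ to $w$ and the sub-trek of $Q$ from $c$ to $w$ are internally vertex-disjoint.

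These two treks from $c$ to $w$ share only their endpoints, so any $d$-separating set of $(c, w)$ must contain an interior non-collider of each, forcing the minimum $d$-separating set size to be at least two. Applying Conjecture \ref{conj:higher d-separation} to the pair $(c, w)$ would then yield that $I_G$ is not toric. The main obstacle will be justifying the first-reconvergence construction produces internally vertex-disjoint sub-paths in full generality, especially when $P_1$ and $P_2$ share substantial structure away from $c$; however, the divergence of first edges at $c$ (guaranteed by the collider/non-collider conflict) should ensure this argument always yields at least two internally disjoint trek fragments, possibly after iterating the construction on sub-sub-paths when the two paths accidentally share an initial edge in Case B.
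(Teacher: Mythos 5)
Your proof applies Conjecture~\ref{conj:higher d-separation} to a constructed pair $(c,w)$ rather than, as the paper does, to the original pair $(i,j)$ (or to a pair $(i_1,j_1)$ of the last and first common vertices of $P_1$ and $P_2$ around $c$, when the two paths share more than just $i$, $c$, $j$). This is an appealingly local strategy, and your first-reconvergence construction does correctly produce two treks $A$ and $B$ from $c$ to $w$ that meet only at their endpoints. However, there is a genuine gap: $c$ and $w$ can be adjacent, in which case no $d$-separating set of $(c,w)$ exists and Conjecture~\ref{conj:higher d-separation} cannot be invoked for that pair. Acyclicity guarantees that $A$ has an interior vertex (the vertex of $P_1^j$ immediately after $c$ cannot lie on the directed path $Q$, since otherwise one would create a directed cycle through $c$), but there is no analogous guarantee for $B$: the reconvergence point $w$ may be the very first vertex of $Q$ after $c$, so that $B$ is the single edge $c\to w$.

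For a concrete failure, take the DAG with edges $1\to 2$, $1\to 3$, $2\to 3$, $3\to 4$, $5\to 3$, $5\to 4$, with $i=2$, $j=4$, $c=3$, $P_2 = 2\to 3\to 4$, and $P_1 = 2\leftarrow 1\to 3\leftarrow 5\to 4$. You are in Case~A, $P_1^j = 3\leftarrow 5\to 4$, $Q = 3\to 4$, and the first reconvergence is $w=4=j$. Here $c=3$ and $w=4$ are adjacent, so $(c,w)$ cannot be $d$-separated, and your sentence ``any $d$-separating set of $(c,w)$ must contain an interior non-collider of each'' is vacuous for this pair. The lemma's conclusion does still hold for this graph, but via the pair $(i,j)=(2,4)$: any $d$-separating set must contain $3$ to block the trek $2\to 3\to 4$, which then activates the collider at $3$ on the path through $5$ and forces a second vertex into the set; this is exactly the pair the paper works with. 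The same obstruction recurs whenever $c\to w$ happens to be an edge of $G$, so you would need to treat the adjacent case separately, for instance by falling back to the original endpoints as the paper does.
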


\begin{proof}
\textbf{Case I:} $P_1$ and $P_2$ have no common vertices except $i, c$ and $j$ :

The proof follows from the $d$-separation of $i$ and $j$. As $c$ is the only collider within $P_1$, any set $C$ which contains $c$ and $d$-separates $i$ from $j$ has to contain at least one more vertex from $P_1$. This is because $C=\{c\}$ is not enough to $d$-separate $i$ and $j$. Hence, by using Conjecture \ref{conj:higher d-separation} we can conclude that $I_G$ is not toric.

\textbf{Case II:} $P_1$ and $P_2$ have more than 3 common vertices :

Let $i_1$ be the last common vertex before $c$ and $j_1$ be the first common vertex after $c$ within the two paths. Then following Case I by replacing $i$ and $j$ with $i_1$ and $j_1$ respectively completes the proof.
\end{proof}

\begin{lemma}\label{lemma: no shortest trek}
Suppose that Conjecture \ref{conj:higher d-separation} is true.
Let $G$ be a DAG where the shortest trek map cannot be defined. 
Then $I_G$ is not toric.
\end{lemma}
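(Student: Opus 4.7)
The plan is to apply Conjecture~\ref{conj:higher d-separation}, possibly via Lemma~\ref{lemma: collider and non collider}, to a suitably chosen pair of vertices in $G$. Because the shortest trek map cannot be defined, there exist distinct vertices $i,j$ admitting two distinct shortest treks $T_1$ and $T_2$ of some common length $\ell$. A trek of length $1$ is just an edge and is therefore automatically unique, so $\ell\ge 2$; in particular $i$ and $j$ are non-adjacent. I would then choose $(i,j)$ so as to minimize $\ell$ over all such pairs.

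The central claim I would establish is that for this minimal pair, $T_1$ and $T_2$ share only the endpoints $\{i,j\}$. Granted the claim, the proof concludes quickly: any $d$-separating set $C$ of $\{i\},\{j\}$ must block both $T_1$ and $T_2$, and since treks are colliderless the only way to block a trek is via a non-collider in its interior; disjointness of the two interiors then forces $|C|\ge 2$, so Conjecture~\ref{conj:higher d-separation} yields that $I_G$ is not toric.

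To establish the claim, I would suppose for contradiction that $T_1$ and $T_2$ share an interior vertex $w$, and consider the two ``cross concatenations'' obtained by concatenating the sub-trek of $T_1$ from $i$ to $w$ with the sub-trek of $T_2$ from $w$ to $j$, and vice versa. Since $T_1$ and $T_2$ individually have no collider at $w$, the four sub-trek edges at $w$ cannot all point into $w$, and consequently at least one cross concatenation is collider-free at $w$ and is therefore itself a trek from $i$ to $j$. Writing $\ell_k^L=|T_k[i,w]|$ and $\ell_k^R=|T_k[w,j]|$, the two cross concatenations have lengths $\ell_1^L+\ell_2^R$ and $\ell_2^L+\ell_1^R$, summing to $2\ell$. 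If $\ell_1^L\ne \ell_2^L$, then one cross concatenation has length strictly less than $\ell$: if it is the collider-free one, the minimality of $\ell$ is contradicted, while if it is the one with a collider at $w$, it serves as a path from $i$ to $j$ with a unique collider at $w$ and $T_1$ serves as a trek through $w$, so Lemma~\ref{lemma: collider and non collider} gives that $I_G$ is not toric. If instead $\ell_1^L=\ell_2^L$, then the sub-treks $T_1[i,w]$ and $T_2[i,w]$ are two distinct treks from $i$ to $w$ of common length $\ell_1^L<\ell$; by minimality of $\ell$ the pair $(i,w)$ admits a unique shortest trek $T^*$ of length strictly less than $\ell_1^L$, and combining $T^*$ with the appropriate $T_k[w,j]$ (one of the two choices avoids a collider at $w$, again by the four-edge count) produces a trek from $i$ to $j$ of length strictly less than $\ell$, a contradiction.

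The main obstacle is the bookkeeping in the shared-interior case, where one must juggle the edge directions at $w$, the four sub-trek lengths, and the conclusions forced by the minimality of $\ell$ on the sub-pairs $(i,w)$ and $(w,j)$, in order to always reach either a strictly shorter trek (contradicting minimality) or the hypothesis of Lemma~\ref{lemma: collider and non collider}.
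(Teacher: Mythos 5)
Your proposal follows essentially the same route as the paper's proof: both split on whether the two shortest treks share an interior vertex, both use cross-concatenations of the two treks at a shared vertex, and both fall back on Lemma~\ref{lemma: collider and non collider} and Conjecture~\ref{conj:higher d-separation}. The genuine difference is organizational: you pick the pair $(i,j)$ to minimize the common shortest-trek length $\ell$ globally and use that minimality to drive the argument, whereas the paper uses a somewhat informal recursion (``replacing $j$ with $c$, we can follow the same argument as that in Case I''). Your explicit minimality is cleaner and makes the descent argument more visible; the paper's version leaves the base case and the verification that the sub-treks are themselves shortest implicit.

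That said, your final sub-case has a gap. When $\ell_1^L=\ell_2^L$, you introduce the unique shortest trek $T^*$ from $i$ to $w$ and claim that ``one of the two choices avoids a collider at $w$, again by the four-edge count.'' The four-edge count proves that at least one of the two cross-concatenations of $T_1[i,w]$ with $T_2[w,j]$ (and vice versa) is collider-free at $w$, but it says nothing about $T^*$, which contributes a \emph{fifth} edge at $w$ whose direction is unconstrained. If $T^*$ enters $w$ and both $e_1^R,e_2^R$ enter $w$ (forcing $e_1^L,e_2^L$ to leave $w$, which is consistent with $T_1,T_2$ being colliderless at $w$), then both $T^*\cup T_1[w,j]$ and $T^*\cup T_2[w,j]$ have a collider at $w$, so neither produces a shorter trek. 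That case is salvageable: $T^*\cup T_k[w,j]$ is then a path with exactly one collider at $w$ while $T_k$ is a trek through $w$, so Lemma~\ref{lemma: collider and non collider} applies directly (modulo checking that the concatenation is a simple path, a subtlety the paper also glosses over). A second small point: you assert $T_1[i,w]\neq T_2[i,w]$, which fails if the treks share a common prefix through $w$; this is repaired by taking $w$ to be the first shared interior vertex \emph{after} the treks diverge. Neither of these is a fundamental obstruction, and the paper's proof is no more careful on these points, but they should be closed for the argument to be complete.
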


\begin{proof}
The shortest trek map in $G$ is not defined when there is no unique shortest trek between two vertices. 
Let $i$ and $j$ be two vertices in $G$ having two treks 
$P_1$ and $P_2$ between them of the same length and have no other trek whose length is smaller.

\textbf{Case I:} There is no common vertex between $P_1$ and $P_2$ except $i$ and $j$.  
In this case, we will have to select at least one vertex from each of the two treks 
to $d$-separate $i$ and $j$. Hence by Conjecture \ref{conj:higher d-separation} 
we can conclude that $I_G$ is not toric.

\textbf{Case II:} Suppose that $P_1$ and $P_2$ have at least one common vertex.
Without loss of generality, we can assume that $i<j$. Let $c$ be the first common vertex between $P_1$ and $P_2$. Then the treks $P_1$ and $P_2$ can be written as 
\begin{eqnarray*}
P_1&=&P_1(i\rightleftharpoons c)\cup P_1(c\rightleftharpoons j) \text{ and} \\
P_2&=&P_2(i\rightleftharpoons c)\cup P_2(c\rightleftharpoons j),
\end{eqnarray*}
where $P_1(i\rightleftharpoons c)$ and $P_2(i\rightleftharpoons c)$ denote the 
trek between $i$ and $c$ within the treks $P_1$ and $P_2$ respectively.
Let the lengths of $P_1(i\rightleftharpoons c),P_1(c\rightleftharpoons j), P_2(i\rightleftharpoons c)$ and $P_2(c\rightleftharpoons j)$ be $r_1,s_1,r_2$ and $s_2$ respectively. Then we have
\begin{equation}\label{equation:equality}
r_1+s_1=r_2+s_2.
\end{equation}
This gives us two new paths between $i$ and $j$, namely 
$P_3=P_1(i\rightleftharpoons c)\cup P_2(c\rightleftharpoons j)$ and 
$P_4=P_2(i\rightleftharpoons c)\cup P_1(c\rightleftharpoons j)$. 
If either of $P_3$ or $P_4$ has a collider at $c$, 
then by Lemma \ref{lemma: collider and non collider} we know that $I_G$ is not toric. 
So, we can assume that $P_3$ and $P_4$ are also treks. 

Now, let  $r_1<r_2$. Then by equation \ref{equation:equality}, we know that $s_2<s_1$. From these inequalities, we get that the trek $P_3$ is of length $r_1+s_2$ which is smaller than $r_1+s_1$, a contradiction. ( Similar argument follows for $r_2<r_1$ ).
Thus, we have $r_1=r_2$ and $s_1=s_2$. Now replacing $j$ with $c$, we can follow the same argument as that in Case I. Hence, $I_G$ is not toric. 
\end{proof}

Recall that an undirected graph is \emph{chordal} if it has no induced cycles of length $\geq 4$.
For the remainder of the section, we consider DAGs $G$ whose undirected version $G^\sim$ is 
a chordal graph.  
In Theorem \ref{thm:safe gluing at n clique} we used the condition that $I_{G_1}$ and $I_{G_2}$ 
can have at most one common indeterminate $\sigma_{cc}$. 
In the next Lemma we show that if Conjecture \ref{conj:higher d-separation} is true, 
then the above condition of Theorem \ref{thm:safe gluing at n clique} 
is satisfied when at least one of $G_1$ or $G_2$ 
is a chordal DAG.  
So this provides further evidence in favor of
Conjecture \ref{conj:safe gluing conjecture}. 

\begin{lemma}
Suppose that Conjecture \ref{conj:higher d-separation} is true.
Let $G_1$ and $G_2$ be two DAGs with $I_{G_1} = ST_{G_1}$ and  
$I_{G_2} = ST_{G_2}$.  Let 
$G$ be the resultant DAG obtained after a safe gluing of $G_1$ and $G_2$ at an $N$-clique. 
Let $C = \{c\} \cup D$ be the vertices in the $N$-clique where $c$ is the choke point.
Let $c' \in C$ and $d \in D$.
If $G_1$ is chordal and $p_1$ is a vertex in $G_1\setminus C$ such that the shortest trek 
$p_1\leftrightarrow c'$ contains the edge $c'\rightarrow d$ then $G$ can be constructed by safe gluing two DAGs 
 at an $(N-1)$-clique. 
\end{lemma}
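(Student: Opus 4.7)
The plan is to exhibit two new DAGs $G_1''$ and $G_2''$ whose safe gluing at the $(N-1)$-clique $C' = C\setminus\{d\}$ produces $G$. My natural candidate is $G_1'' = G_1$ (keeping $d$ on the $G_1$ side) and $G_2'' = G_2\setminus\{d\}$, with the same choke point $c$. The bulk of the proof consists in establishing, from the trek hypothesis, that this decomposition actually recovers $G$ and satisfies the safe gluing conditions at the smaller clique.

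First I would analyze the trek structure. The hypothesis that the edge $c'\to d$ lies in the shortest trek $p_1\leftrightarrow c'$ forces $c'$ to be the topmost vertex of this trek: the vertex $c'$ is an endpoint of the trek while the edge points out of $c'$, so $c'$ must be the source. Hence the trek is a directed path $c'\to d \to u_2\to\cdots\to u_m\to p_1$ lying in $G_1$. Chordality of $G_1$ then constrains the intermediate vertices: since $C$ is a clique in $G_1^{\sim}$, if some $u_i$ with $i\geq 2$ lay in $C$, the clique edge $c'\to u_i$ (which exists in $G_1$ because $c'<u_i$ in the topological order) would produce the strictly shorter directed trek $c'\to u_i\to\cdots\to p_1$, contradicting the shortness hypothesis. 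Therefore $u_i\in V(G_1)\setminus C$ for every $i\geq 2$, and in particular $d$ has an outgoing edge to $u_2\in V(G_1)\setminus C$.

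Next I would argue that $d$ has no edges to $V(G_2)\setminus C$ in $G$. If there were an edge $d\to s_2$ with $s_2\in V(G_2)\setminus C$, then the path $p_1\leftarrow u_m\leftarrow\cdots\leftarrow u_2\leftarrow d\to s_2$ would be a trek with top $d$ between $p_1\in V(G_1)\setminus D$ and $s_2\in V(G_2)\setminus D$ passing through $d\in D$, contradicting the safe gluing of $G_1$ and $G_2$ at $C$. A hypothetical edge $s_2\to d$ is ruled out analogously by the trek $s_2\to d\to u_2\to\cdots\to p_1$ with top $s_2$. Consequently, every edge of $G$ incident to $d$ has its other endpoint in $V(G_1)$; in particular, every edge of $G_2$ incident to $d$ lies inside $C$ and is already present in $G_1$. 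Setting $G_2'' = G_2\setminus\{d\}$ therefore loses no edges of $G$, and one obtains $G=G_1''\cup G_2''$ with $V(G_1'')\cap V(G_2'')=C'$, an $(N-1)$-clique.

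To finish, I would verify that the clique sum of $G_1''$ and $G_2''$ at $C'$ is a safe gluing with choke point $c$. Treks between $V(G_1)\setminus D$ and $V(G_2)\setminus D$ already satisfy the required conditions by the original safe gluing hypothesis, so the genuine new case consists of treks with $d$ as an endpoint on the $G_1''$ side. Using the restrictions on $d$'s edges derived above, together with the chordality of $G_1$ and its clique-tree structure, I would argue that every trek in $G$ from $d$ into $V(G_2)\setminus D$ can be routed through $c$ while avoiding $D' = D\setminus\{d\}$. The main obstacle will be precisely this final verification: treks of the form $d\leftarrow v\to c$ or $d\leftarrow v\to\cdots\to j$ with $v\in D'$ and $j\in V(G_2)\setminus C$ threaten condition (ii) of safe gluing, and ruling them out cleanly appears to require combining chordality of $G_1$ with the structural consequences of the trek hypothesis to control how the other $D$-vertices interact with $d$ and $c$.
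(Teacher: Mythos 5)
Your proposal diverges from the paper's proof in both its reading of the hypothesis and its overall strategy, and the gap you flag at the end is a genuine one. On the hypothesis: you read it literally, so that the shortest trek $p_1\leftrightarrow c'$ itself contains $c'\to d$ and is therefore the directed path $c'\to d\to\cdots\to p_1$. The paper's proof instead works with the shortest trek from $p_1$ \emph{to $d$}, writes it as $p_1-p_2-\cdots-p_m-c'\to d$, and exploits that its \emph{final} edge is $c'\to d$; the ``$c'$'' in ``$p_1\leftrightarrow c'$'' in the displayed statement is almost certainly a typo for $d$. These two readings give genuinely different trek structures and different roles for $d$, so your analysis of $d$'s incident edges, while internally consistent, is not analysing the object the paper's argument actually uses.

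More importantly, the paper's proof is by contradiction and runs on Conjecture~\ref{conj:higher d-separation}: assuming no $(N-1)$-clique decomposition exists, it produces a second path from $p_m$ to $d$ avoiding $c'\to d$, uses chordality of $G_1$ to force chords in the resulting cycle, and observes that $p_m$ and $d$ then need a d-separating set of size at least two, so by the conjecture $I_{G_1}$ would not be toric, contradicting $I_{G_1}=ST_{G_1}$. Your constructive approach never invokes Conjecture~\ref{conj:higher d-separation} at all, even though the lemma explicitly assumes it, and this is exactly why the ``main obstacle'' you identify cannot be circumvented with what you have. Treks of the form $d\leftarrow d'\to c\to\cdots\to j$ with $d'\in D'$ and $j\in V(G_2)\setminus C$ really can exist: $C$ induces a complete DAG, so $d'\to d$ and $d'\to c$ are both present whenever $d'$ precedes $d$ and $c$ in the numerical order, and nothing you have established excludes the remaining directed path $c\to\cdots\to j$ in $G_2$. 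If such a trek exists, condition~(ii) of safe gluing fails at $C'$, and your candidate decomposition $G_1''=G_1$, $G_2''=G_2\setminus\{d\}$ is not a safe gluing. Ruling that out requires showing the existence of such a trek forces a minimal d-separating set of size $\geq 2$ inside $G_1$ so that the conjecture yields a contradiction --- precisely the chordality-plus-conjecture argument the paper carries out, but starting from the trek $p_m-c'\to d$, not from $c'\to d\to\cdots\to p_1$. Until that step is supplied, the proof does not go through.
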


\begin{proof}
Let $p_1-p_2- \cdots- p_m - c'\rightarrow d$ be the shortest trek between 
$p_1$ and $d$, where $p_1-p_2$ denotes the edge between $p_1$ and $p_2$ of unknown direction. 
Then $p_m- c' \rightarrow d$ is also the shortest trek between $p_m$ and $d$. 
Let us assume that $G$ cannot be constructed by safe gluing two
DAGs at an $(N-1)$-clique. 
Then there must exist another path from $p_m$ to $d$ not containing the edge 
$c'\rightarrow d$. We select that path whose vertices are adjacent to either 
$p_m,c'$ or $d$. 
Let $p_m-q_1-\cdots-q_r\rightarrow d$ be such a path.  
As $G_1$ is chordal, either $p_m\rightarrow d$ is an edge or there exists an edge between 
$q_r$ and $c'$. If $p_m\rightarrow d$ is an edge, then $p_m\rightarrow d$ becomes a shorter trek than 
$p_m- c' \rightarrow d$, which is a contradiction. 
If there is an edge between $q_r$ and $c'$, 
there must also be an edge between $p_m$ and $q_r$ 
(again as $G_1$ is chordal and $I_{G_1}$ is toric). 
Independent of the direction of these two edges $q_r-c'$ and $p_m-q_r$, 
we can say that $p_m$ is $d$-separated from $d$ by at least two vertices $c'$ and $q_r$. 
Thus by using Conjecture \ref{conj:higher d-separation} we can conclude that 
$I_{G_1}$ is not toric, which is a contradiction.
\end{proof}

So far we have shown that safe gluing preserves the toric property of the vanishing ideals. 
But it is interesting to check if a DAG $G$ with toric vanishing ideal 
can always be obtained as a safe gluing of smaller DAGs with toric vanishing ideals. 
We end this paper with the conjecture that such a decomposition always exist for chordal graphs if Conjecture \ref{conj:higher d-separation} is true.

\begin{conj}\label{conj:decompose G}
Suppose that Conjecture \ref{conj:higher d-separation} is true.
Let $G$ be a chordal DAG with toric vanishing ideal. 
Then there exist $G_1$ and $G_2$ with toric vanishing ideals such that 
$G$ can be obtained as a safe gluing of $G_1$ and $G_2$ at an $N$-clique.
\end{conj}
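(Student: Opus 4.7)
The plan is to leverage the clique-tree decomposition theory for chordal undirected graphs together with the structural constraints that Conjecture \ref{conj:higher d-separation} imposes on DAGs whose vanishing ideal is toric. First, if $G$ is a complete DAG the statement is vacuous, so assume otherwise. Since $G^\sim$ is chordal and non-complete, it admits a minimal clique separator $C$ (for instance, the separator adjacent to a leaf of a clique tree of $G^\sim$), partitioning the remaining vertices into non-empty sides $A$ and $B$. Set $G_1 = G[A \cup C]$ and $G_2 = G[B \cup C]$, both chordal DAGs; these will be the candidate pieces for the safe gluing.

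Next, I would identify the choke point inside $C$. Since $I_G$ is toric, Conjecture \ref{conj:higher d-separation} ensures that every pair $(a,b) \in A \times B$ is d-separated by a single vertex, which must lie in $C$ because $C$ already undirected-separates $A$ from $B$. A natural candidate for a uniform choke point is the topmost vertex $c$ of $C$ (the source of the complete subDAG $G[C]$ under the topological order). Setting $D = C \setminus \{c\}$, safe-gluing condition (i) --- that $c$ is a choke point on a fixed side of every trek --- should follow from Lemma \ref{lemma: n-1 clique}(i) once uniqueness of the single-vertex separator is established. For condition (ii) --- that no trek between $A \cup \{c\}$ and $B \cup \{c\}$ meets $D$ --- I would argue by contradiction: such a trek through $d \in D$, combined with the $c$--$d$ edge inside $C$, would produce a second $a$-to-$b$ path carrying a collider, so Lemma \ref{lemma: collider and non collider} would contradict the toricness of $I_G$.

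Finally, to show $I_{G_1}$ and $I_{G_2}$ are toric, I would lift any d-separator of $G_1$ (respectively $G_2$) to a d-separator of $G$, at worst by adjoining vertices of $D$, so that the minimum-d-separator bound is inherited by each piece. Together with Conjecture \ref{conj:toric iff I_G=ST_G}, an induction on the number of vertices of $G$ would then yield toricness for both pieces and close the argument.

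The main obstacle will be the middle step: proving that a single vertex $c \in C$ serves as a choke point uniformly across all pairs $(a,b)$, and then systematically ruling out every trek through $D$. Because edges inside the clique $C$ can orient in many ways, treks from $A$ to $B$ might use vertices of $D$ as sources, sinks, or interior non-collider points, and each configuration must be closed off by a careful case analysis driven by the chordality of $G^\sim$ and the no-higher-d-separation hypothesis. A secondary difficulty is verifying that the choke-point candidate extracted from the undirected clique tree coincides with a topologically meaningful vertex of $G[C]$, since the clique-tree decomposition is blind to edge orientation.
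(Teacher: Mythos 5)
This statement is Conjecture~\ref{conj:decompose G} in the paper, and the paper supplies no proof of it --- it is left explicitly open ("We end this paper with the conjecture that such a decomposition always exists for chordal graphs if Conjecture~\ref{conj:higher d-separation} is true"). So there is no paper proof to compare your proposal against; what you have written is a sketch of an attack on an open problem, and you have been commendably honest about where it is incomplete. A few remarks on the sketch itself.

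The framing is sensible: take a minimal clique separator $C$ of the chordal graph $G^\sim$ (from a clique tree), set $G_1 = G[A\cup C]$, $G_2 = G[B\cup C]$, and try to verify the safe-gluing conditions. However, two of the steps are more than technical gaps --- they are genuinely unresolved. First, your candidate choke point, "the topmost vertex $c$ of $G[C]$," is not justified and is not obviously correct. The topmost vertex of the clique is determined by how the edges inside $C$ are oriented, but the choke-point condition is a statement about \emph{all} treks between $V(G_1)\setminus D$ and $V(G_2)\setminus D$, and these treks can enter $C$ through any of its vertices; it is not clear that the source of $G[C]$ is the vertex they all pass through, nor that such a common vertex exists at all without first establishing something like the clique-sum decomposition you are trying to prove. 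Second, your closing step invokes Conjecture~\ref{conj:toric iff I_G=ST_G}, but the statement of Conjecture~\ref{conj:decompose G} only grants you Conjecture~\ref{conj:higher d-separation} as a hypothesis; using the other main conjecture means you would be proving a weaker statement than the one posed. The inheritance of toricness by the pieces $G_1, G_2$ is also delicate: lifting a d-separation from $G_1$ to $G$ is not automatic, because adding the edges and vertices of $G_2$ can create new paths and new colliders that alter which sets d-separate a given pair, so "the minimum-d-separator bound is inherited" needs a real argument. In short, the sketch is a reasonable outline and aligns with the kinds of tools the paper develops (Lemma~\ref{lemma: n-1 clique}, Lemma~\ref{lemma: collider and non collider}), but it does not yet constitute a proof, and the authors themselves did not see how to complete one.
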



\section*{Acknowledgments}

Pratik Misra and Seth Sullivant were partially supported by the 
US National Science Foundation (DMS 1615660).



\begin{thebibliography}{99}

\bibitem{Graphical Models}
S. Lauritzen.
\emph{Graphical Models}. \emph{Oxford Statistical Science Series} {\bf{17}} Clarendon Press, Oxford, 1996.


\bibitem{Maathuis2019}
M. Maathuis, M. Drton, S. Lauritzen and M. Wainwright. (Editors)
\emph{Handbook of Graphical Models.}
 Chapman and Hall/CRC Handbooks of Modern Statistical Methods. CRC Press, Boca Raton, FL, 2019.

\bibitem{Block graphs}
P. Misra, S. Sullivant.
Gaussian graphical models with toric vanishing ideals. \emph{Annals of the Institute of Statistical Mathematics},
DOI 10.1007/s10463-020-00765-0, 2020.

\bibitem{Ancestral graph}
T.S. Richardson, P. Spirtes. Ancestral graph Markov models. \textit{The Annals of Statistics} {\bf{30}}, 962-1030, 2002.

\bibitem{Causation}
P. Spirtes, C. Glymour, R. Scheines, D. Heckerman. \emph{Causation, Prediction, and Search}. MIT Press, Cambridge, MA, 2000.

\bibitem{GB and CP}
B. Sturmfels.
\emph{Gr{\"o}bner Bases and Convex Polytopes}.
University Lecture Series {\bf{8}}, AMS, 1996.

\bibitem{BS n CU}
B. Sturmfels, C. Uhler.
Multivariate Gaussians, semidefinite matrix completion and convex algebraic geometry. \emph{Annals of the Institute of Statistical Mathematics} {\bf{62}}, 603-638, 2010.

\bibitem{Gaussian Networks}
S. Sullivant. Algebraic geometry of Gaussian Bayesian networks. \emph{Advances in Applied Mathematics} {\bf{40}}, Issue 4, 482-513, May 2008.

\bibitem{Algebraic Statistics}
S. Sullivant.
\emph{Algebraic Statistics}. Volume {\bf{194}} of \emph{Graduate Studies in Mathematics}. AMS, Providence, RI, 2018.

\bibitem{Trek Separation}
S. Sullivant, K. Talaska, J. Draisma.
Trek separation for Gaussian graphical models.
\emph{The Annals of Statistics} {\bf{38}}, No 3, 1665-1685, 2010.




\end{thebibliography}
\end{document}